\newcommand\Algphase[1]{%
\vspace*{-.7\baselineskip}\Statex\hspace*{\dimexpr-\algorithmicindent-2pt\relax}\rule{\textwidth}{0.4pt}%
\Statex\hspace*{-\algorithmicindent}\textbf{#1}%
\vspace*{-.7\baselineskip}\Statex\hspace*{\dimexpr-\algorithmicindent-2pt\relax}\rule{\textwidth}{0.4pt}%
}
\newtheorem{theorem}{Theorem}
\newtheorem{lemma}{Lemma}
\newtheorem{remark}[theorem]{Remark}
\def \bbeta{\boldsymbol{\beta}}
\def \bx{\mathbf{x}}
\def \bX{\mathbf{X}}
\def \bbX {\mathbb{X}}
\def \bY{\mathbf{Y}}
\def \bbY {\mathbb{Y}}
\def \bZ {\mathbb{Z}}
\def \E{\mathbb{E}}
\def \P{\mathbb{P}}
\title{On Ensembling vs Merging: Least Squares and Random Forests under Covariate Shift}
\author{%
  Maya Ramchandran  \\
  Department of Biostatistics\\
  Harvard University\\
  Cambridge, MA 02138 \\
  \texttt{maya\_ramchandran@g.harvard.edu} \\
  \And
  Rajarshi Mukherjee \\
  Department of Biostatistics\\
  Harvard University\\
  Cambridge, MA 02138 \\
  \texttt{rmukherj@hsph.harvard.edu} \\
}
\begin{document}

\maketitle

\begin{abstract}
  It has been postulated and observed in practice that for prediction problems in which covariate data can be naturally partitioned into clusters, ensembling algorithms based on suitably aggregating models trained on individual clusters often perform substantially better than methods that ignore the clustering structure in the data. In this paper, we provide theoretical support to these empirical observations by asymptotically analyzing linear least squares and random forest regressions under a linear model. Our main results demonstrate that the benefit of ensembling compared to training a single model on the entire data, often termed 'merging', might depend on the underlying bias and variance interplay of the individual predictors to be aggregated. In particular, under both  fixed and high dimensional linear models, we show that merging is asymptotically superior to optimal ensembling techniques for linear least squares regression due to the unbiased nature of least squares prediction. In contrast, for random forest regression under fixed dimensional linear models, our bounds imply a strict benefit of ensembling over merging. Finally, we also present numerical experiments to verify the validity of our asymptotic results across different situations.
\end{abstract}

\section{Introduction}

\par Modern statistical applications often encounter situations where it is critical to account for inherent heterogeneity and substructures in data \cite{luo2010, verbeke1996, Dietterich2000, Bouwmeester2013}. Under the presence of heterogeneity in the distribution of covariates, it is natural to seek prediction strategies which are robust to such variability in order to accurately generalize to new data. In this regard, previous research has hinted at the possibility of improving prediction accuracy in such setups by pre-processing data through suitable clustering algorithms \cite{Ramchandran2020, Trivedi2015, deodhar2007}. Subsequently  applying ensembling frameworks to pre-clustered data has also been shown to produce critical advantages \cite{Patil2018, ramchandran2021}. In this paradigm, data are first separated into their component clusters, a learning algorithm is then trained on each cluster, and finally the predictions made by each single-cluster learner are combined using weighting approaches that reward cross-cluster prediction ability within the training set.  Learning algorithms that have been explored within this framework include Neural Networks, Random Forest, and (regularized) least squares linear regression  \cite{Patil2018, ramchandran2021, Sharkey1996, randomForest}. These research activities have imparted pivotal insights into identifying prediction algorithms which might benefit most from such ensembling techniques compared to ``merging" methods (referring to the predictions made by the chosen learning algorithm on the entire data) that ignore any underlying clusters in the covariate distribution. 
In this paper, we provide a crucial theoretical lens to further our understanding of ensembling methods on clustered data under a high dimensional linear regression setup with covariate shift defining natural clusters in predictors.

\par To focus our discussions, we consider two prediction algorithms, namely high dimensional linear least squares and random forest regressions, to be compared through their margin of differential behavior in a  merging versus ensembling framework. The choice of these two particular methods is carefully guided by previous research which we briefly discuss next. In particular,   \cite{ramchandran2021} methodologically highlighted the efficacy of ensembling over merging for Random Forest learners trained on data containing clusters. Across a wide variety of situations, including a variety of data distributions, number and overlap of clusters, and outcome models, they found the ensembling strategy to produce remarkable improvements upon merging --  at times even producing an over 60\% reduction in prediction error. The ensembling method similarly produced powerful results on cancer genomic data containing clusters, highlighting its potential applications for real datasets with feature distribution heterogeneity. Conversely, \textcolor{black}{they} observed that linear regression learners with a small number of features produced no significant differences between ensembling and merging when the model was correctly specified, even in the presence of fully separated clusters within the training set. These interesting observations obviate analytical investigation of where the benefits in ensembling for Random Forest arise, and why the same results are not achieved with linear regression.

\par In this paper, we explore the role of the bias-variance interplay in conferring the benefits of ensembling. We show that for unbiased high-dimensional linear regression, even optimally weighted ensembles do not asymptotically improve over a single regression trained on the entire dataset -- and in fact perform significantly worse when the dimension of the problem grows proportional to sample size. Conversely, we show that for ensembles built from Random Forest learners (which are biased for a linear outcome model), ensembling is strictly more accurate than merging, regardless of the number of clusters within the training set. We additionally verify our theoretical findings through numerical explorations. 


\par 
We shall also utilize the following language convention for the rest of the paper. By Single Cluster Learners (SCL's), we will refer to any supervised learning algorithm that can produce a prediction model using a single cluster. In this paper, we consider linear regression and random forest as two representative SCL's for reasons described above. The term \textit{Ensemble}, to be formally introduced in Section \ref{sec:math_formalization}, will indicate training an SCL on each cluster within the training set and then combining all cluster-level predictions using a specific weighting strategy, creating a single predictor that can be applied to external studies. The \textit{Merged} method, also to be formally introduced in Section \ref{sec:math_formalization}, will refer to the strategy of training the same single learning algorithm chosen to create the SCL's on the entire training data. With this language in mind, the rest of the paper is organized in two main subsections pertaining
to comparing the results of merging versus ensembling 
from both theoretical and numerical perspectives, for linear least regression (Section \ref{sec:least_square}) and random forest regression (Section \ref{sec:random_forest}) respectively.

\section{Main results}\label{sec:main_results}
We divide our discussions of the main results into the following subsections. In section \ref{sec:math_formalization} we introduce a mathematical framework under which our analytic explorations will be carried out. Subsequently, sections \ref{sec:least_square} and \ref{sec:random_forest} provide our main results on comparison between ensembling and merging methods through the lens of linear least squares and random forest predictors respectively. These subsections also contain the numerical experiments to verify and extend some of the intuitions gathered from the theoretical findings.

\subsection{Mathematical Formalization}\label{sec:math_formalization}
We consider independent observations $(Y_i,\mathbf{x}_i), i=1,\ldots,n$ on $n$  individuals with $Y_i\in \mathbb{R}$ and $\mathbf{x}_i\in \mathbb{R}^p$  denoting  outcome of interest and $p$-dimensional covariates respectively. Our theoretical analyses will be carried out under a linear regression assumption on the conditional distribution of $Y$ given $\mathbf{x}$ as follows: $Y_i=\mathbf{x}_i^T\boldsymbol{\beta}+\boldsymbol{\varepsilon}_i, i=1,\ldots,n,\label{eqn:model}$ where $\boldsymbol{\varepsilon}_i$ are i.i.d. random variables independent of $\mathbf{x}_i$ with mean $0$ and variance $\sigma^2$. Although the conditional outcome distributions remains identical across subjects, we will naturally introduce sub-structures in the covariates $\mathbf{x}_i$'s as found through subject matter knowledge and/or pre-processing through clustering. To this end, given any $K\geq 2$ we consider a partition of $\{1,\ldots,n\}$ into disjoint subsets $ \mathbb{S}_1,\ldots,\mathbb{S}_K$ with $|\mathbb{S}_t|=n_t, t\geq 1$ and $\sum_t n_t=n$. Subsequently, we define the $t^{\rm th}$ data sub-matrix as $(\bY_t:\bbX_t)_{n_t\times (p+1)}$ with $\bY_t=(Y_i)_{i\in \mathbb{S}_t}$ and $\bbX_t$ collecting corresponding $\mathbf{x}_i$'s in its rows for $i\in \mathbb{S}_t$. We also denote the merged data matrix as $(\bY,\bbX)$ where $\bY=(\bY_1^T \ldots \bY_K^T)$ and $\bbX=(\bbX_1^T \ldots \bbX_K^T)$. This setup allows us to define ensembling single-cluster learners and merged prediction strategies as follows.

\begin{algorithm}[ht]    
\caption{: \textbf{Ensembling}}
\begin{algorithmic}[1]
    \State for $t = 1,\ldots ,K$:
        \begin{itemize}
            \item Compute $\hat{Y}_t(\bx_{\star})$, the prediction of the SCL trained on $(\bY_t:\bbX_t)$ at new point $\bx_{\star}$
            \item Determine $\hat{w}_t$, the weight given to $\hat{Y}_t(\bx_{\star})$ within the ensemble, using chosen weighting scheme
        \end{itemize}
        \State The prediction made by the \textit{Ensemble} as a function of the weight vector  $\mathbf{w}=({w}_1,\ldots,{w}_K)$: $\hat{Y}_{{\mathbf{w}},E}(\bx_{\star}) = \sum_{t = 1}^K \hat{w}_t \hat{Y}_t(\bx_{\star})$;
\Algphase{Algorithm 2 : Merging}
    \begin{enumerate}
        \item Train same learning algorithm used for the SCLs on $(\bY,\bbX)$
        \item The prediction of this \textit{Merged} learner on $\bx_{\star}$ is $\hat{Y}_M(\bx_{\star})$
    \end{enumerate}
\end{algorithmic}
  \end{algorithm}


The rest of the paper will focus on analyzing linear least squares and random forest regressions under the above notational framework. As we shall see, the main difference in the behavior of these two SCL's arise due to their respective underlying bias variance interplay. This in turn necessitates different strategies for understanding ensembling methods based on these two learners. In particular, since it has been shown in [\cite{ramchandran2021}, Figure 5] that the bias comprises nearly the entirety of the MSE for both \textit{Merged} and \textit{Ensemble} learners constructed with random forest SCLs and trained on datasets containing clusters, we focus on the behavior of the squared bias for the random forest regression analysis. In contrast, for the analysis of least squares under model \eqref{eqn:model}, the lack of any bias of the least squares method implies that one needs to pay special attention to the weighting schemes used in practice. To tackle this issue, we first discuss asymptotic behavior of the commonly used stacking \citep{breiman1996stacked} weighting method (-- see e.g. \citep{Ramchandran2020,ramchandran2021} and references therein for further details) through the lens of numerical simulations. Building on this characterization, we subsequently appeal to tools of random matrix theory to provide a precise asymptotic comparison between ensembling and merging techniques.

\subsection{Linear Least Squares Regression}\label{sec:least_square}

We now present our results on linear least squares regression under both ensembling and merging strategies. To this end, we shall always assume that each of $\bbX_t$ and $\bbX$ are full column rank almost surely. In fact, this condition holds whenever $p\leq \min_t n_t$ and the $\mathbf{x}_i$'s have non-atomic distributions \citep{eaton1973non}. Other "nice" distributions additionally adhere to such a condition with high probability in large sample sizes, and thus the main ideas of our arguments can be extended to other applicable distributional paradigms. If we consider a prediction for a new data point $\bx_{\star}$, the \textit{Merged} and \textit{Ensemble} predictors based on a linear least squares SCL can be written as 
\begin{align*}
\hat{Y}_{M}&=\bx_{\star}^T(\bbX^T\bbX)^{-1}\bbX^T\bY,\\
    \hat{Y}_{\hat{\mathbf{w}},E}&=\sum_{t=1}^K{\hat{w}_l\bx_{\star}^T\left(\bbX_t^T\bbX_t\right)^{-1} \bbX_t^T \bY_t},
\end{align*}
where $\hat{w}_l$ are the stacking weights described in section 2 of \cite{ramchandran2021}. Stacked regression weights, originally proposed by Leo Breiman in 1996 \citep{breiman1996stacked}, 
have been shown in practice to produce optimally performing ensembles \cite{Stacking1996, Ramchandran2020}. Therefore, a first challenge in analyzing the performance of the ensemble predictor $\hat{Y}_E$ is handling the data-dependent nature of the stacking weights $\hat{w}_t$'s. To circumvent this issue we refer to our experimental results (see e.g. Table \ref{table:invstack} in Section \ref{sec:simulation_lin_reg}) to verify that asymptotically, the stacking weights described in \cite{Patil2018} represent a convex weighting scheme. In particular, we remark that across a variety of simulations, the stacking weights corresponding to $K$ clusters asymptotically lie on the $K$-simplex. A closer look reveals that the weights individually converge to ``universal" deterministic limits which sum up to $1$. Consequently, in the rest of this section we shall work with stacking weights upholding such requirements. We note that we will not aim to derive a formula (although this can be done with some work), but rather will simply demonstrate that even the optimal weighting for predicting the outcome for a new point $\bx_{\star}$ is inferior compared to the the merged predictor $\hat{Y}_M$. This will immediately imply the benefits of using the $\hat{Y}_M$ instead of $\hat{Y}_E$ in the case of linear least squares regression under a linear outcome model.  
We next provide a characterization of the optimal convex combination of weights for predicting an outcome corresponding to the new point $\bx_{\star}$ as inverse variance weighting (IVW) in the following lemma. 


\begin{lemma}
\label{lemma:inv_variance_weighting}
Denote by $\mathbf{w}$ the $K$-dimensional vector of weights, ${\mathbf{w}}=[{w}_1, \ldots, {w}_K]^T$. Then, the inverse variance weighting (IVW) scheme is the solution to the following optimization problem

\begin{align*}
    \min_{\mathbf{w}} \mathrm{Var}(\hat{Y}_E({\mathbf{w}},\bx_{\star})) \quad \text{ s.t.} \quad \sum_{t = 1}^K {w}_t = 1 
\end{align*}
yielding ${w}_t^{\rm opt} = \left(\sum_{t = 1}^K \frac{1}{\sigma_t^2} \right)^{-1} \frac{1}{\sigma_t^2}$ for $t = 1, \ldots, K$, with $\sigma^2_t=\mathrm{Var}\left(\bx_{\star}^T\left(\bbX_t^T\bbX_t\right)^{-1}\bbX_t^T\bbY_t\right)$.
\end{lemma}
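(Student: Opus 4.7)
The plan is to reduce the problem to a strictly convex quadratic program in $\mathbf{w}$ by first exploiting independence across clusters, and then solve the resulting program via a one-line Lagrange-multiplier calculation (or, equivalently, Cauchy--Schwarz).

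First I would argue that the cluster-level predictors $\hat{Y}_t(\bx_{\star}) = \bx_{\star}^T(\bbX_t^T\bbX_t)^{-1}\bbX_t^T\bY_t$ are mutually independent across $t = 1,\ldots,K$. This follows from the construction of the partition: the sub-matrices $(\bY_t:\bbX_t)$ are built from disjoint index sets $\mathbb{S}_t$ and the original observations $(Y_i,\mathbf{x}_i)$ are independent, so treating $\bx_{\star}$ as fixed (or independent of the training data) preserves this independence. Consequently all cross-covariances vanish and
$$\mathrm{Var}(\hat{Y}_E(\mathbf{w},\bx_{\star})) \;=\; \sum_{t=1}^K w_t^2\,\mathrm{Var}(\hat{Y}_t(\bx_{\star})) \;=\; \sum_{t=1}^K w_t^2 \sigma_t^2,$$
with $\sigma_t^2$ exactly as defined in the lemma.

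Second, I would minimize the strictly convex objective $\sum_{t=1}^K w_t^2\sigma_t^2$ (strictly convex since $\sigma_t^2 > 0$ almost surely under the full column rank assumption on $\bbX_t$, combined with $\mathrm{Var}(\boldsymbol{\varepsilon})>0$) subject to $\sum_t w_t = 1$. Forming the Lagrangian $\mathcal{L}(\mathbf{w},\lambda) = \sum_t w_t^2\sigma_t^2 - \lambda(\sum_t w_t - 1)$ and setting $\partial \mathcal{L}/\partial w_t = 0$ yields $w_t = \lambda/(2\sigma_t^2)$. Plugging into the constraint gives $\lambda/2 = (\sum_{s=1}^K 1/\sigma_s^2)^{-1}$, and hence $w_t^{\rm opt} = (1/\sigma_t^2)/\sum_{s=1}^K (1/\sigma_s^2)$, exactly as claimed. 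Strict convexity then certifies that this stationary point is the unique global minimizer.

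The only real subtlety — not so much an obstacle as a bookkeeping point — is the first step: the ``$\mathrm{Var}$'' in the statement is implicitly taken over the training data (and, if $\bx_{\star}$ is random, over $\bx_{\star}$ independently of the training data), so that the quadratic form in $\mathbf{w}$ is genuinely diagonal. Once that is pinned down, the remainder is textbook; one could equally well bypass calculus and apply Cauchy--Schwarz in the form $(\sum_t w_t)^2 \leq (\sum_t w_t^2\sigma_t^2)(\sum_t 1/\sigma_t^2)$, with equality iff $w_t \propto 1/\sigma_t^2$, to arrive at the same IVW expression directly.
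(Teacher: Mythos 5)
Your proposal is correct and follows essentially the same route as the paper's own proof: reduce the variance to the diagonal quadratic form $\sum_t w_t^2\sigma_t^2$ and solve the constrained minimization by Lagrange multipliers. You are somewhat more explicit than the paper about why the cross-covariances vanish (disjoint index sets, independent observations), which is a worthwhile clarification but not a different argument.
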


Using the results from Lemma \ref{lemma:inv_variance_weighting}, we can now provide a comparison of the mean squared prediction error of the \textit{Merged} predictor $\hat{Y}_M$ with the optimally weighted \textit{Ensemble} predictor 
$\hat{Y}_{w,\mathrm{opt}}(\bx_{\star}):=\sum_{j=1}^Kw_{t}^{\mathrm{opt}}(\bx_{\star})\bx^T_{\star}\left(\bbX_t^T\bbX_t\right)^{-1} \bbX_t^T\bY_t$ from input $\bx_{\star}$ under a high dimensional asymptotics where $\frac{p}{n}\rightarrow \gamma \in [0,1)$. 
It is simple to see that both learners are unbiased, and therefore the MSE's are driven by the following variances:
\begin{align}
    \text{Var}\left[ \hat{Y}_{w,\rm opt}(\bx_{\star}) |\bx_{\star}\right] &= \bigg[\sum_{t = 1}^K \left[\bx_{\star}^T(\X_t^T\X_t)^{-1}\bx_{\star}\right]^{-1}\bigg]^{-1}\\
    \text{Var}\left[ \hat{Y}_M(\bx_{\star})|\bx_{\star} \right] &= \bx_{\star}^T\bigg[\sum_{t = 1}^{K}\X_t^T\X_t\bigg]^{-1}\bx_{\star}
\end{align}

\begin{theorem}\label{theorem:highdim}
Suppose $K=2$, $\lambda_t=n/n_t$, $\mathbf{x}_i\sim N(\bmu_1,I)$ for $i\in \mathbb{S}_1$ and $\mathbf{x}_i\sim N(\bmu_2,I)$ for $i\in \mathbb{S}_2$, and $\bx_{\star}\sim N(\mathbf{0},I)$. Assume that $p, n \to \infty$ such that $p/n\rightarrow \gamma$ and $p/n_t\rightarrow \lambda_t\gamma<1$ for $t\in \{1,2\}$. Also assume that $\bmu_1,\bmu_2$ are uniformly bounded (in $n,p$) norm. Then, the following holds:  
\begin{align*}
     \frac{\mathrm{Var}\left[\hat{Y}_M(\bx_{\star})|\bx_{\star}\right]}{\mathrm{Var}\left[ \hat{Y}_{w,\rm opt}(\bx_{\star})|\bx_{\star}\right]} &\rightarrow  
     \frac{\gamma}{1-\gamma}\times {\sum_{t=1}^2\frac{1-\lambda_t\gamma}{\lambda_t\gamma}}=\frac{1-2\gamma}{1-\gamma}\quad \text{in probability.}
\end{align*}
\end{theorem}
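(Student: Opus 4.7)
The plan is to reduce each conditional variance to a quadratic form in an inverse Wishart-type matrix and then extract high-dimensional limits via random matrix theory. Decompose $\bbX_t = \mathbf{q}_t \bmu_t^T + \mathbb{Z}_t$ with $\mathbf{q}_t = \mathbf{1}_{n_t}$ and $\mathbb{Z}_t$ having i.i.d.\ $N(\mathbf{0}, I_p)$ rows, so that $\bbX_t^T \bbX_t = \mathbb{Z}_t^T \mathbb{Z}_t + R_t$ where $R_t = n_t \bmu_t \bmu_t^T + \bmu_t \mathbf{a}_t^T + \mathbf{a}_t \bmu_t^T$ with $\mathbf{a}_t = \mathbb{Z}_t^T \mathbf{q}_t$ is symmetric of rank at most two; analogously, the merged $\bbX^T \bbX$ differs from a centered Wishart $\mathbb{Z}^T \mathbb{Z}$ by a finite-rank perturbation. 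I will argue that these low-rank, bounded-norm perturbations are asymptotically negligible for the quadratic forms of interest, reducing everything to the behavior of centered Wishart resolvents against an independent isotropic vector $\bx_\star$.

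The core ingredient is the isotropic Marchenko-Pastur law: for a centered Wishart matrix built from an $n\times p$ Gaussian design with $p/n \to c \in (0,1)$, the normalized trace $\tfrac{1}{p}\mathrm{tr}((\tfrac{1}{n}\mathbb{Z}^T\mathbb{Z})^{-1}) \to \int x^{-1}\,dMP_c(x) = 1/(1-c)$, hence $\mathrm{tr}((\mathbb{Z}^T\mathbb{Z})^{-1}) \to c/(1-c)$. Combined with Hanson-Wright concentration for $\bx_\star \sim N(\mathbf{0}, I_p)$, independent of the training design, this yields, in probability,
\begin{align*}
\bx_\star^T (\mathbb{Z}_t^T \mathbb{Z}_t)^{-1} \bx_\star \to \frac{\lambda_t \gamma}{1 - \lambda_t \gamma}, \qquad \bx_\star^T (\mathbb{Z}^T \mathbb{Z})^{-1} \bx_\star \to \frac{\gamma}{1-\gamma},
\end{align*}
since the conditional variance of such a form is of order $\mathrm{tr}((\mathbb{Z}^T\mathbb{Z})^{-2}) = O(p/n^2) = o(1)$.

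To transfer these limits to the mean-shifted matrices, I would apply the Sherman-Morrison-Woodbury identity. Writing $R_t = U_t V_t^T$ with $U_t = [\bmu_t, \mathbf{a}_t]$ and $V_t = [\mathbf{a}_t + n_t \bmu_t, \bmu_t]$ in $\mathbb{R}^{p \times 2}$, the inverse decomposes as $(\mathbb{Z}_t^T\mathbb{Z}_t)^{-1}$ minus a Woodbury correction built from the $2 \times 2$ Schur factor $(I_2 + V_t^T (\mathbb{Z}_t^T\mathbb{Z}_t)^{-1} U_t)^{-1}$ flanked by bilinear pieces $\bx_\star^T (\mathbb{Z}_t^T\mathbb{Z}_t)^{-1} U_t$. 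Using the isotropic MP estimate $\mathbf{v}^T (\mathbb{Z}_t^T\mathbb{Z}_t)^{-1} \mathbf{v} = O_p(\|\mathbf{v}\|^2/n_t)$ for deterministic $\mathbf{v}$, the variance bound $\mathrm{Var}(\bx_\star^T (\mathbb{Z}_t^T\mathbb{Z}_t)^{-1}\bmu_t \mid \mathbb{Z}_t) = \bmu_t^T (\mathbb{Z}_t^T\mathbb{Z}_t)^{-2} \bmu_t = O(\|\bmu_t\|^2/n_t^2)$, and the projection identity $\mathbb{Z}_t (\mathbb{Z}_t^T\mathbb{Z}_t)^{-1}\mathbb{Z}_t^T = P_{\mathrm{col}(\mathbb{Z}_t)}$ together with concentration of quadratic forms in the fixed vector $\mathbf{q}_t$ against a random projector, one can verify that the full Woodbury correction to $\bx_\star^T(\mathbb{Z}_t^T\mathbb{Z}_t)^{-1}\bx_\star$ is $O_p(1/n_t) = o_p(1)$; an analogous argument handles the merged matrix.

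Finally, the ensemble variance satisfies $\mathrm{Var}[\hat{Y}_{w,\mathrm{opt}}]/\sigma^2 \to [\sum_t (1-\lambda_t\gamma)/(\lambda_t\gamma)]^{-1} = \gamma/(1-2\gamma)$, using the identity $\sum_{t=1}^2 \lambda_t^{-1} = (n_1+n_2)/n = 1$, while $\mathrm{Var}[\hat{Y}_M]/\sigma^2 \to \gamma/(1-\gamma)$, and the ratio simplifies to $(1-2\gamma)/(1-\gamma)$ as claimed. I expect the main technical obstacle to lie in the Woodbury step: the cross-form $\mathbf{a}_t^T (\mathbb{Z}_t^T\mathbb{Z}_t)^{-1} \mathbf{a}_t$ is $\Theta_p(p)$ rather than vanishing, so the largest entry of $V_t^T(\mathbb{Z}_t^T\mathbb{Z}_t)^{-1}U_t$ is macroscopic, and one must verify that the determinantal cancellations in inverting the $2\times 2$ factor still force the resulting bilinear correction to be $o_p(1)$, all while carefully tracking the dependence between $\mathbf{a}_t$ and $\mathbb{Z}_t$ that precludes a naive independence-based concentration argument.
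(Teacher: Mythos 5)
Your proposal is correct, and its architecture coincides with the paper's: concentration of the quadratic form $\bx_{\star}^TA\bx_{\star}$ around $\tfrac{1}{p}\mathrm{Trace}(A)$ (the paper invokes Bai--Silverstein's Lemma B.26 where you invoke Hanson--Wright; these are the same estimate), the Marchenko--Pastur limit $\tfrac{1}{p}\mathrm{Trace}\bigl(\bigl(\tfrac1n\mathbb{Z}^T\mathbb{Z}\bigr)^{-1}\bigr)\to(1-c)^{-1}$, a Woodbury argument to discard the finite-rank mean-shift perturbation, and the final arithmetic via $\sum_t\lambda_t^{-1}=1$. The one genuine divergence is the choice of base matrix. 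You expand around the raw Wishart $\mathbb{Z}_t^T\mathbb{Z}_t$, so your $R_t$ is rank two and contains $\mathbf{a}_t=\mathbb{Z}_t^T\mathbf{1}_{n_t}$, whose quadratic form $\mathbf{a}_t^T(\mathbb{Z}_t^T\mathbb{Z}_t)^{-1}\mathbf{a}_t=\mathbf{1}_{n_t}^TP_{\mathrm{col}(\mathbb{Z}_t)}\mathbf{1}_{n_t}=\Theta_{\P}(p)$ is indeed macroscopic. This step does close: the corresponding diagonal entry of the $2\times 2$ Schur factor is $-n_t(1-\lambda_t\gamma)(1+o_{\P}(1))$, its determinant stays bounded away from zero, and the entry sizes of the inverted factor ($\Theta(n_t)$, $O(1)$, $O(1/n_t)$) exactly compensate the sizes of the bilinear pieces $\bx_{\star}^T(\mathbb{Z}_t^T\mathbb{Z}_t)^{-1}\boldsymbol{\mu}_t=O_{\P}(1/n_t)$ and $\bx_{\star}^T(\mathbb{Z}_t^T\mathbb{Z}_t)^{-1}\mathbf{a}_t=O_{\P}(1)$, giving a total correction of order $O_{\P}(1/n_t)$ --- but it requires precisely the bookkeeping you anticipate, not a determinantal cancellation. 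The paper sidesteps this by first centering at the sample mean: it writes $\bbX_t^T\bbX_t/n_t=\tilde\Sigma_t+\hat v_t\hat v_t^T$ with $\hat v_t=\bar{\mathbf{Z}}_t+\boldsymbol{\mu}_t$, so the perturbation is rank one with $\|\hat v_t\|_2^2=O_{\P}(1)$ and the Sherman--Morrison correction to the normalized trace is trivially $o_{\P}(1)$, at the cost of quoting the asymptotic spectral equivalence of $\tilde\Sigma_t$ and $\tfrac{1}{n_t}\mathbb{Z}_t^T\mathbb{Z}_t$ from Bloemendal et al. Your route buys a self-contained reduction to the standard Wishart at the price of a more delicate $2\times 2$ inversion; the paper's buys a one-line rank-one step at the price of an extra spectral-equivalence citation. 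For the merged matrix both arguments are structurally identical, with the rank of the perturbation growing with $K$ but remaining finite.
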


\par A few comments are in order regarding the assumptions made and subsequent implications of the derived results. First, we note that assumptions are not designed to be optimal but rather to present a typical instance where such a comparison holds. For example, the assumption of normality and spherical variance-covariance matrix is not necessary and can be replaced easily by $\mathbf{x}_i\sim \bmu_t+\Sigma^{1/2}\bZ_i$ for $\bZ_i$ having i.i.d. mean-zero, variance $1$ coordinates with bounded $8^{\rm th}$ moments, and $\Sigma$ having empirical spectral distribution converging to a compact subset of $\mathbb{R}_+$ -- (see e.g. \cite{bai2008large}). 
Moreover, the mean $\mathbf{0}$ nature of the $\bx_{\star}$ also allows some simplification of the formulae the arise in the analyses and can be extended to incorporate more general means by appealing to results of quadratic functionals of the Green's Function of $\mathbb{X}_t^T\mathbb{X}_t$ \citep{mestre2006asymptotic}. 
Similarly, the normality of $\bx_{\star}$ also allows ease of computation for higher order moments of quadratic forms of $\bx_{\star}$ and can be done without modulo suitable moment-assumptions. We do not pursue these generalizations for the sake of space and rather focus on a set-up which appropriately conveys the main message of the comparison between the \textit{Ensemble} and \textit{Merged} predictors. Moreover, we have also taken $p<\min_t n_t$ to produce exactly unbiased prediction through least squares. However, our analyses tools can be easily extended to explore ridge regression in $p>n$ scenarios (i.e. $\gamma>1$ in the context of Theorem \ref{theorem:highdim}). Finally, we note that the $K=2$ assumption is also made for simplification of proof ideas and one can easily conjecture a general $K$ result from our proof architecture (i.e. a ratio of $(1-K\gamma)/(1-\gamma)$ asymptotically).

As for the main message, we can conclude that Theorem \ref{theorem:highdim} provides evidence that the asymptotic prediction mean squared of the \textit{Merged} is strictly less than that of the \textit{Ensemble} for positive values of $\gamma$. Furthermore, for $\gamma = 0$, indicating all slower growing dimensions $p$ compared to $n$, the ratio in Theorem \ref{theorem:highdim} converges to 1, indicating that the two approaches are asymptotically equal in this case and providing further theoretical evidence to support the empirical findings. 

We end this subsection by presenting the results for the fixed dimensional $p$ case, in order to demonstrate that the above results are not merely a high dimensional artifact.  The next theorem, the proof of which is standard and provided in the Supplementary Materials, provides further support to this theory.

\begin{theorem}\label{theorem: linreg_merged_vs_ensemble}
Suppose $K=2$ with $\mathbf{x}_i\sim N(\bmu_1,I)$ for $i\in \mathbb{S}_1$ and $\mathbf{x}_i\sim N(\bmu_2,I)$ for $i\in \mathbb{S}_2$ with $\|\bmu_{\mathbf{1}}\| = \|\bmu_{\mathbf{2}}\| = 1$, and that $\bx_{\star}$ is randomly drawn from a distribution with mean $\mathbf{0}$ and variance covariance matrix $I$. If $p=O(1)$ as $n\rightarrow \infty$, then there exists $\kappa<1$ such that

\begin{align*}
    \frac{\mathrm{Var}\left[\hat{Y}_M(\bx_{\star})|\bx_{\star}\right]}{\mathrm{Var}\left[ \hat{Y}_{w,\rm opt}(\bx_{\star})|\bx_{\star}\right]} &\rightarrow \kappa<1, \quad \text{in probability}
\end{align*}
\end{theorem}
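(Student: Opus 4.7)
The plan is, since $p$ is fixed while $n\to\infty$, to reduce everything to a law of large numbers computation and then conclude via Gauss--Markov. Implicitly assuming that the cluster proportions satisfy $n_t/n\to\pi_t\in(0,1)$ with $\pi_1+\pi_2=1$, the strong LLN applied coordinatewise to the $p\times p$ matrix $\bbX_t^T\bbX_t$ yields $n_t^{-1}\bbX_t^T\bbX_t\to\Sigma_t:=I+\boldsymbol{\mu}_t\boldsymbol{\mu}_t^T$ almost surely for each $t$, and hence $n^{-1}\sum_t\bbX_t^T\bbX_t\to \pi_1\Sigma_1+\pi_2\Sigma_2$. Continuous mapping converts these to convergence of the relevant inverses. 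Conditioning on $\bx_\star$ (which is independent of the training data) and substituting into formulas~(3)--(4), the $\sigma^2$ and $1/n$ factors cancel in the ratio, leaving
\begin{equation*}
\frac{\mathrm{Var}[\hat Y_M(\bx_\star)\mid \bx_\star]}{\mathrm{Var}[\hat Y_{w,\mathrm{opt}}(\bx_\star)\mid \bx_\star]}\;\xrightarrow{\P}\;\bx_\star^T(\pi_1\Sigma_1+\pi_2\Sigma_2)^{-1}\bx_\star\cdot\sum_{t=1}^{2}\frac{\pi_t}{\bx_\star^T\Sigma_t^{-1}\bx_\star}\;=:\;\kappa(\bx_\star).
\end{equation*}
Using $\|\boldsymbol{\mu}_t\|=1$ and Sherman--Morrison, $\Sigma_t^{-1}=I-\tfrac12\boldsymbol{\mu}_t\boldsymbol{\mu}_t^T$; the Woodbury identity then gives a closed form for the other inverse, expressing $\kappa(\bx_\star)$ explicitly in terms of $\boldsymbol{\mu}_t^T\bx_\star$, $\boldsymbol{\mu}_1^T\boldsymbol{\mu}_2$, and $\|\bx_\star\|^2$.

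Next I would establish the inequality $\kappa(\bx_\star)\leq 1$ via Gauss--Markov. Conditional on all the designs, any prediction of the form $\sum_t w_t\,\bx_\star^T\hat{\bbeta}_t$ with scalar weights $w_t$ summing to $1$ (and measurable in $\bbX_1,\bbX_2,\bx_\star$) is a linear unbiased estimator of $\bx_\star^T\bbeta$, while the merged OLS prediction is the BLUE; hence the variance ratio is $\leq 1$ pre-limit. Strict inequality is a manifestation of the fact that the BLUE is the \emph{matrix}-weighted combination $(\sum_t\bbX_t^T\bbX_t)^{-1}\sum_t\bbX_t^T\bbX_t\,\hat{\bbeta}_t$, and this agrees with a scalar-weighted combination only when $\Sigma_1\propto\Sigma_2$, i.e.\ when $\boldsymbol{\mu}_1\parallel\boldsymbol{\mu}_2$. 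Alternatively, one can directly expand $1-\kappa(\bx_\star)$ using the Sherman--Morrison formulas from the first step and factor out a nonnegative quantity that vanishes exactly on the bad set where $\bx_\star$ is orthogonal to both $\boldsymbol{\mu}_1$ and $\boldsymbol{\mu}_2$ (and at $\boldsymbol{\mu}_1=\pm\boldsymbol{\mu}_2$).

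The principal obstacle, as I see it, is not the asymptotic step itself but the fact that $\kappa(\bx_\star)$ depends on $\bx_\star$ and is therefore a random quantity, with $\kappa(\bx_\star)=1$ on the measure-zero event $\bx_\star\perp\mathrm{span}(\boldsymbol{\mu}_1,\boldsymbol{\mu}_2)$ (which is nonempty only when $p\geq 3$). The cleanest interpretation of the statement is thus that the ratio converges in probability to a random limit that is strictly less than $1$ almost surely, and the constant $\kappa$ in the theorem can be taken to be (for instance) $\E_{\bx_\star}[\kappa(\bx_\star)]<1$, or any strict upper bound on the limit on a set of full $\bx_\star$-probability. Producing this $\kappa$ amounts to integrating the explicit Sherman--Morrison expression from the previous paragraph against the distribution of $\bx_\star$; the algebra is straightforward but a little tedious, and is where most of the actual work will lie.
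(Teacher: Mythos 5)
Your argument is essentially correct but proceeds quite differently from the paper. The paper (Lemmas 3 and 4 of the Appendix, followed by the short computation in its proof of Theorem 2) never invokes Gauss--Markov: it replaces each quadratic form $\bx_{\star}^T(\bbX_t^T\bbX_t)^{-1}\bx_{\star}$ by the deterministic proxy $n_t^{-1}\mathrm{Trace}\bigl[(I+\boldsymbol{\mu}_t\boldsymbol{\mu}_t^T)^{-1}\bigr]=n_t^{-1}\bigl[(p-1)+\tfrac{1}{1+\|\boldsymbol{\mu}_t\|^2}\bigr]$, carries out explicit Sherman--Morrison/Woodbury trace algebra for $K=2$, and in the balanced case with $\|\boldsymbol{\mu}_1\|=\|\boldsymbol{\mu}_2\|=1$ reduces the comparison to $n\,s_1\to p-\tfrac12$ for the ensemble versus $n\,s_2<p-\tfrac12$ for the merged learner. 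Your route --- law of large numbers plus continuous mapping to obtain the random limit $\kappa(\bx_{\star})$, then the BLUE property of merged OLS to get $\kappa(\bx_{\star})\le 1$, then a separate identification of the equality set --- buys a conceptual explanation of \emph{why} merging wins (scalar-weighted combinations of cluster-wise OLS fits are a strict subclass of linear unbiased estimators of $\bx_{\star}^T\bbeta$) and avoids the brute-force trace algebra. It also surfaces a point the paper glosses over: for fixed $p$ the quadratic form $\bx_{\star}^TA\bx_{\star}$ does \emph{not} concentrate around $\mathrm{Trace}(A)$, so the actual in-probability limit of the ratio is the random variable $\kappa(\bx_{\star})$, not a deterministic constant; the paper's Lemma 3, which asserts convergence of the conditional variances to deterministic $d_1,d_2$, is only valid after an additional averaging over $\bx_{\star}$ that the paper performs implicitly. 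Your proposal to read $\kappa$ as $\E_{\bx_{\star}}[\kappa(\bx_{\star})]$ (or an a.s.\ bound off a null set) is the honest repair.

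Two caveats you should make explicit if you write this up. First, your Gauss--Markov step gives $\kappa(\bx_{\star})\le 1$ for every $\bx_{\star}$, but strictness requires ruling out the degenerate configuration $\boldsymbol{\mu}_1=\pm\boldsymbol{\mu}_2$, in which case $\Sigma_1=\Sigma_2$ and $\kappa(\bx_{\star})\equiv 1$; the theorem's hypotheses constrain only the norms, so this case must either be excluded or acknowledged (the paper's own proof of Theorem 2 implicitly sets $(\boldsymbol{\mu}_1^T\boldsymbol{\mu}_2)^2=1$ in simplifying $s_2$, so it is not careful here either). Second, if you take $\kappa=\E_{\bx_{\star}}[\kappa(\bx_{\star})]$ you still need an integrability/uniform-integrability remark to pass from the pointwise limit to the expectation, though for fixed $p$ and $\bx_{\star}\sim N(\mathbf{0},I)$ this is routine.
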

Once again, the assumptions made are not designed to be optimal but rather to present a typical instance where such a comparison holds. In terms of the result,
Theorem \ref{theorem: linreg_merged_vs_ensemble} provides analytic evidence that for two clusters under linear least squares regression under a fixed dimensional linear model, the \textit{Merged} asymptotically produces predictions that are strictly more accurate than the \textit{Ensemble}. This is indeed congruent with the empirical results. Although we do not provide exact analytic arguments for more clusters (since they result in  more complicated expressions in the analyses without changing the main theme of results), the same relationship persists in simulations as the number of clusters increases (see e.g. Table \ref{table:invstack} displays results for $K = 5$). Overall, we once again conclude that when both the \textit{Merged} and \textit{Ensemble} are unbiased by using linear least squares predictions, there is no benefit to cluster-specific ensembling -- even when the dimension is fixed.

\subsection{Simulations}\label{sec:simulation_lin_reg}

\begin{table}[ht]
\centering
\vspace*{0mm}\hspace*{0cm}\includegraphics[scale = .54]{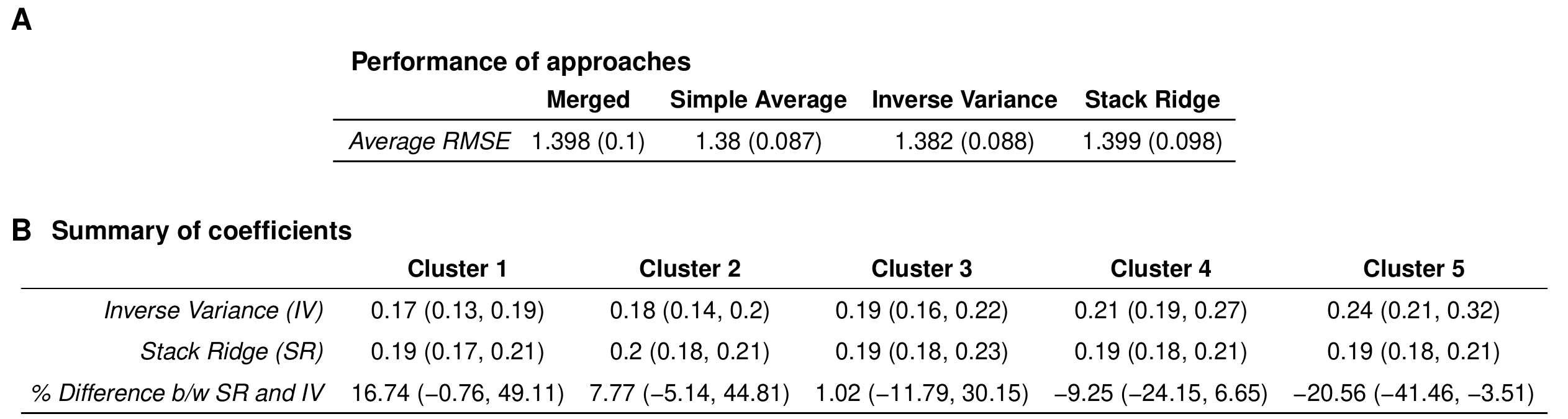}
\caption{Performance and ensemble coefficient values for linear regression SCLs trained on 5 simulated Gaussian clusters per iteration, over 100 reps.  (A) Average RMSE of the four different ensembling methods on test sets. Standard deviations are shown in parentheses. (B) Ensemble coefficient values for the 5 SCLs in training, ranked by Inverse Variance (IV) weighting. 95\% confidence intervals are shown in parentheses. }
\label{table:invstack}
\end{table}

We first present our numerical experiments to demonstrate asymptotic behavior of the stacking weights in Table \ref{table:invstack}. In the simulation used to create Table \ref{table:invstack}, ensembles of least square regression learners combined with IVW or stacking were compared when trained and tested on datasets generated by gaussian mixture models. For each of 100 iterations, a training set with 5 clusters was generated using the {\tt clusterGeneration} package in R, with median values of between-cluster separation \cite{clusterGeneration}. Test sets were drawn using the same general approach with 2 clusters. All outcomes were generated through a linear model, following an analogous framework to \cite{ramchandran2021}. To form the ensembles, least squares regression learners were first trained on each cluster; then, all SCLs were combined through either stacking or inverse variance weights, the latter for which each SCL was weighted proportionally to the inverse sample variance of its prediction accuracy on all clusters not used in training.

\par From Table \ref{table:invstack}A, we observe that there is no significant difference in prediction accuracy between the \textit{Merged} learner with ensembles weighted through simple averaging, IVW, or stacking. Table \ref{table:invstack}B displays the weights given by either IVW or stacking to each cluster within training, with clusters ordered from the lowest to the highest weight ranges. The distribution of weights for both approaches are centered around the same value, as evidenced by the median cluster weight for Cluster 3. Stacking weights essentially mimic simple averaging, while IVW in general results in a slightly larger range of weights. However, the equal prediction performance of ensembles constructed using either weighting scheme demonstrates that these slight differences in the tails of the weighting distribution produce negligible effects. Simple averaging, IVW, and stacking weights are all centered around the same value, indicating that each SCL on average is able to learn the true covariate-outcome relationship to a similar degree. Furthermore, the \textit{Merged} learner is able to achieve the same level of accuracy, illustrating that at least empirically, there is no benefit to ensembling over merging for least squares regression SCLs.

We next present our numerical experiments to demonstrate the accuracy of Theorem \ref{theorem:highdim} in the $p,n$ asymptotic regime. \begin{figure}[ht]
  \centering
  \includegraphics[width=.7\linewidth]{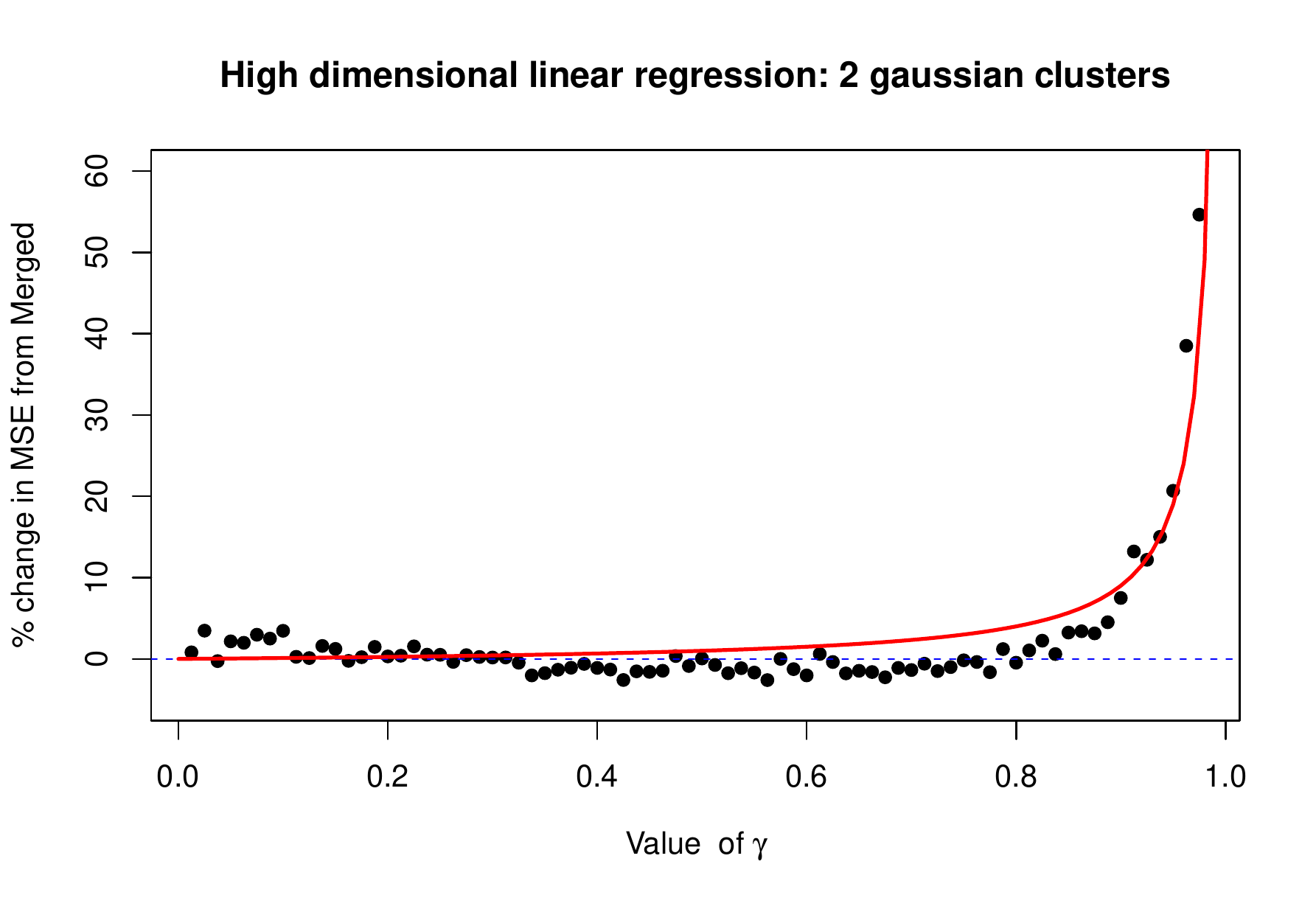}
  \caption{Percent change in average MSE of ensembling approaches compared to the \textit{Merged} for linear regression \textit{Ensemble} learners trained and tested on datasets with 2 equal sized gaussian clusters, as a function of $\gamma_t = \frac{p}{n_t}$. The theoretical limit is shown in red. }
  \label{fig:F1}
\end{figure}
In this regard, Figure \ref{fig:F1} presents the percent change in the average MSE of the \textit{Ensemble} compared to the \textit{Merged} using both the limiting expressions presented in Theorem \ref{theorem:highdim} and results from a simulation study conforming to all assumptions. In the simulation, we set the number of samples $n_t$ per cluster at 400 for $t = 1, 2$, and varied the dimension $p$ incrementally from 0 to 400 to evaluate the performance of these approaches for different values of $\gamma_t = \frac{p}{n_t}$. Clusters were again simulated using the {\tt clusterGeneration} package as described above, and for the sake of simplicity, the coefficients selected to simulate the linear outcome for each cluster were set to be opposites of one another for the first and second clusters. From Figure \ref{fig:F1}, we observe that the theoretical and empirical results closely mirror one another, and that the performance of the \textit{Ensemble} and the \textit{Merged} are almost identical for values of $\gamma_t < .8$, after which the \textit{Merged} produces an exponential comparative increase in performance. These results together with the fixed dimension asymptotics presented in Theorem \ref{theorem: linreg_merged_vs_ensemble} show that for unbiased linear regression, it is overall more advantageous to train a single learner on the entire dataset than to ensemble learners trained on clusters.

\subsection{Random Forest Regression}\label{sec:random_forest}
Next, we examine the asymptotic risk of the \textit{Merged} and \textit{Ensemble} approaches built with random forest SCLs. It has previously been shown that for regression tasks on clustered data, the bias dominates the MSEs of random forest-based learners \cite{ramchandran2021}. Additionally, these results indicate that reduction of the bias empirically constitutes the entirety of the improvement in prediction performance of the \textit{Enemble} over the \textit{Merged} (see figure 5 of \cite{ramchandran2021}). As we have just discussed in the above section, there is no advantage to ensembling over merging for unbiased least squares regression, and in fact the \textit{Merged} learner is asymptotically strictly more accurate than the \textit{Ensemble}. Therefore, we can explore random forest as an SCL in order to determine the effect of ensembling over merging for learning approaches that are biased for the true outcome model, and furthermore pinpoint whether bias reduction is the primary mechanism through which the general cluster-based ensembling framework produces improvements for forest-based learners. 
\par One of the greatest challenges in developing results for random forest is the difficulty in analyzing Breiman's original algorithm; therefore, we will build our theory upon the centered forest model initially proposed by Breiman in a technical report \cite{Breiman2004}. The primary modification in this model is that the individual trees are grown independently of the training sample, but it still has attractive features such as variance reduction through randomization and adaptive variable selection that characterize the classic random forest algorithm. Previous work has shown that if the regression function is sparse, this model retains the ability of the original to concentrate the splits only on the informative features; thus, in this section, we introduce possible sparsity into our considered outcome model. Throughout, we will be using notation consistent with \cite{klusowski2020} and \cite{biau2012}, and will advance upon the work of the former on upper bounding the bias of centered forests on standard uniform data to now present upper bounds for the bias of the \textit{Merged} and \textit{Ensemble} for centered forest SCLs trained on datasets containing clusters. 

\subsubsection{Training sets with two uniformly distributed clusters}
We begin by considering a simple yet representative data-generating model corresponding closely to that on which several previous analytical work in this area has been based \citep{biau2012,klusowski2020}. In particular, we shall assume throughout this section that $p$ is fixed, but will still keep track the effect of the number of non-zero coefficients $S$ in the leading term of the bias of the ensembled and merged predictors. In our analyses, we consider a training set consisting of two non-overlapping, uniform clusters. in particular, the distributions of the two training clusters are given by $\X_1 \stackrel{\rm i.i.d.}{\sim} [\mathrm{U}\left(0, \frac{1}{2}\right)]^p$ and $\X_2 \stackrel{\rm i.i.d.}{\sim} [\mathrm{U}(1, \frac{3}{2})]^p$, with $n_1 = n_2$; that is, each row consists of $p$ independent uniform variables, with the specific distributional parameters depending on cluster membership. We shall often express each training observation shortly as (with an abuse of notation)
\begin{align*}
    \bx_{i} &\stackrel{\rm i.i.d.}{\sim} \left[\mathrm{U}\left(0, \frac{1}{2}\right)\right]^p\mathbbm{1}\{i \in \mathbb{S}_1\} + \left[\mathrm{U}\left(1, \frac{3}{2}\right)\right]^p\mathbbm{1}\{i \in \mathbb{S}_2\} \text{ for } i = 1,...,n
\end{align*}
As before, we denote by $\mathbb{S}_1$ and $\mathbb{S}_2$ the respective set of indices in first and second clusters. Now, we shall consider the distribution of the new point $\bx_{\star}$ is a deterministic mixture of the first and second clusters; that is, $\bx_{\star} \sim \rm A \times [\mathrm{U}\left(0, \frac{1}{2}\right)]^p + (1- \rm A) \times [\mathrm{U}(1, \frac{3}{2})]^p$, where $\rm A \sim $ $\text{Bernoulli}\left(\frac{1}{2}\right)$, representing an random indicator of cluster membership of $\bx_{\star}$. We note here that the choice of parameters for each cluster-level distribution (i.e. the end points of the uniform cluster) are arbitrary; our calculations are simply dependent on the width of the interval within each uniform and that the ranges are non-overlapping. 

\par

In this section our analysis will also be able to incorporate certain sparsity structures of $\bbeta$ in \eqref{eqn:model}. In particular, given any subset $\mathbf{S}\subset \{1,\ldots,p\}$, we suppose that $\bbeta$ in our outcome model has non-zero coordinates restricted to $\mathbf{S}$. The outcome model can therefore be described by the regression function $f(\bx) = \bx_{\mathbf{S}}^T\bbeta_S$ -- the  vector $\bx_{\mathbf{S}}$ therefore denoting the covariates corresponding to only the $S$ strong features captured in vector $\bbeta_S$ out of $p$ total features. Let $S=|\mathbf{S}|$ equal the number of 'strong' features, indicating the covariates that have non-zero contributions to the outcome in the linear relationship.  Thus, $f(\bx)$ is a sparse regression function, with the degree of sparsity mediated by $S$. This underlying model implies a convenient asymptotic representation of the random forest development scheme as described below. 

\par Now, using the language from \cite{klusowski2020}, we define $p_{nj}$ as the probability that the $j^{th}$ variable is selected for a given split of tree construction, for $j = 1,..,p$. In an idealized scenario, we are aware of which variables are strong - in this case, the ensuing random procedure produces splits that asymptotically choose only the strong variables with probability $1/S$ and all other variables with zero probability; that is, $p_{nj} \to  1/S$ if $j \in \mathbf{S}$ and 0 otherwise. The splitting criteria, as described by [\cite{biau2012}, Section 3], is as follows: at each node of the tree, first select $M$ variables with replacement; if all chosen variables are weak, then choose one at random to split on. Thereafter, one can argue that under suitable assumptions,  centered forests asymptotically adaptively select only the strong features, and therefore we can intuitively restrict all further analysis to those $S$ variables. In this regard, we will assume hereafter that $p_{nj}=\frac{1}{S}(1+\xi_{nj})$ for $j\in \mathbf{S}$ and $=\xi_{nj}$ otherwise -- where for each $j$,  $\xi_{nj}$ is a sequence that tends to $0$ as $n\rightarrow \infty$.  Finally, we shall let $p_n=\frac{1}{S}(1+\min_{j}\xi_{jn})$. Also, we let $\log_2k_n$ denote the number of times the process of splitting is repeated for parameter $k_n > 2$ as defined in \cite{klusowski2020}.  

\par The above randomized splitting scheme allows us to formalize the definition of a centered random forest based on a given training data $(Y_i,\mathbf{x}_i)_{i\in \mathcal{D}_n}$ as follows. Given 
a new test point $\bx_{\star}$, and randomizing variable $\theta$ that defines the probabilistic mechanism building each tree, we define $A_n(\bx_{\star}, \theta)$ as the box of the random partition containing test point $\bx_{\star}$ and the individual tree predictor as
\begin{align*}
    f_n(\bx_{\star};\theta, \mathcal{D}_n) &= \frac{\sum_{i \in \mathcal{D}_n } Y_i \mathbbm{1}\{\bx_{i} \in A_n(\bx_{\star}, \theta)\}}{\sum_{i \in \mathcal{D}_n} \mathbbm{1}\{ \bx_{i} \in A_n(\bx_{\star}, \theta )\}} \mathbbm{1}\{\epsilon_n(\bx_{\star}, \theta)\} 
\end{align*}
where $\epsilon_n(\bx_{\star}, \theta\}$ is the event that $\sum_{i = 1} \mathbbm{1}\{\bx_{i} \in A_n(\bx_{\star}, \theta\} > 0$; that is, there is at least one training point that falls within the same partition as $\bx_{\star}$. The forest represents an average over all such trees; we can then obtain the prediction made by the forest by taking the expectation of the individual tree predictors with respect to the randomizing variable $\theta$: 
\begin{align*}
\bar{f}_n(\bx_{\star};\theta, \mathcal{D}_n) &= \sum_{i \in \mathcal{D}_n} Y_i \mathbbm{E}_{\theta}\left[\frac{\mathbbm{1}\{\bx_{i} \in A_n(\bx_{\star}, \theta\}}{\sum_{i \in \mathcal{D}_n} \mathbbm{1}\{\bx_{i} \in A_n(\bx_{\star}, \theta\}} \mathbbm{1}\{\epsilon_n(\bx_{\star}, \theta)\} \right]
\end{align*}
\par Now, we note that the \textit{Ensemble} approach performs a weighted average of the prediction of two forests, one trained on each cluster. As before, let $\hat{Y}_1(\bx_{\star})$, $\hat{Y}_2(\bx_{\star})$, and $\hat{Y}_E(\bx_{\star})$ designate the respective predictions of the forests trained on clusters 1, 2, and the overall ensemble on test point $\bx_{\star}$. For the centered forest algorithm, coordinates of the test point that lie outside the range of the training data for that forest are given predictive values of 0. Therefore, only the forest trained on the first cluster gives non-zero predictions on coordinates in the interval $[0, \frac{1}{2}]$ and vice versa for the forest trained on the second cluster and coordinates in the interval $[1, \frac{3}{2}]$. For convenience of notation and ease of proof architecture, we internally weight each learner in the overall ensemble by cluster membership of $\bx_{\star}$ within the functional forms of $\hat{Y}_1(\bx_{\star})$ and $\hat{Y}_2(\bx_{\star})$. 
  Using this representation, we provide precise expressions for the predictions made by the Merged and Ensemble learners on $\bx_{\star}$ during the proof of our main theorem in the supplementary material -- which essentially considers a weighting scheme based on $w_1=\mathbbm{1}(\bx_{\star}\in [0,1/2]^p )$ and  $w_2=\mathbbm{1}(\bx_{\star}\in [1,3/2]^p )$. We show that both predictions can be represented as weighted averages of the training outcome values, with the choice of weights as the differentiating factor between the two methods.

\par With this, we are now ready present our main result for the mean squared prediction errors of the \textit{Ensemble} and \textit{Merged} approaches under the centered random forest approach described above. At this point we appeal to numerical evidence demonstrating that the squared bias dominates the variance in asymptotic order [\cite{ramchandran2021}, Figure 5] and thereby focus on the asymptotics of the bias in this article. 
The variance component has been empirically and theoretically shown to converge to zero faster than the squared bias, and thus analysis of the squared bias term asymptotically approximates the MSE \cite{klusowski2020}. Our next result reports the leading term of the squared bias, where we will utilize the assumptions on $p_{nj}$, $p_n$, and $k_n$ discussed above. 
\begin{theorem} 
\label{theorem: rf_uniform}
Assume that $\bbeta_{\mathbf{S}} \sim N(\bzero,  I_{S\times S})$  , $K = 2$, $\mathbf{x}_i \stackrel{\rm i.i.d.}{\sim} [\mathrm{U}\left(0, \frac{1}{2}\right)]^p$ for $i\in \mathbb{S}_1$, and $\mathbf{x}_i \stackrel{\rm i.i.d.}{\sim} [\mathrm{U}(1, \frac{3}{2})]^p$ for $i\in \mathbb{S}_2$. Additionally suppose that $p_{nj}\log k_n \to \infty$ for all $j = 1, \ldots p$ and $k_n/n \to 0$ as $n \to \infty$. Then, 
\begin{enumerate}
    \item [(i)] The squared bias of the \textit{Ensemble} is upper bounded by
    \begin{align*}
    \frac{\text{S}}{8} k_n^{log_2(1 - 3p_n/4)}\left[1 + o(1) \right]
\end{align*}
    \item [(ii)] The squared bias of the \textit{Merged} is upper bounded by 
    \begin{align*}
        \frac{\text{S}}{4} k_n^{log_2(1 - 3p_n/4)}\left[1 + o(1) \right]
    \end{align*}
\end{enumerate}
\end{theorem}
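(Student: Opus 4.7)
My plan is to reduce the squared bias of both the Merged and Ensemble predictors to an expected cell-center deviation along the sparse coordinates, then evaluate that deviation using the binomial split counts of the centered forest. Since $f(\bx) = \bx_{\mathbf{S}}^T \bbeta_{\mathbf{S}}$ is linear, the forest bias at $\bx_\star$ equals $\bbeta_{\mathbf{S}}^T(\bx_{\star,\mathbf{S}} - \mathbb{E}_\theta[\bar{\bx}_{\mathbf{S}}(A_n(\bx_\star, \theta))])$; I would apply Jensen's inequality to lift the square inside $\mathbb{E}_\theta$, and then use the independence of $\bbeta_{\mathbf{S}}$ from the partition together with $\mathbb{E}_{\bbeta}[\beta_j \beta_k] = \delta_{jk}$ to collapse the cross-coordinate terms. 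After also averaging over the mixture law of $\bx_\star$, the problem reduces to controlling, for each predictor, $\sum_{j \in \mathbf{S}} \mathbb{E}_{\bx_\star, \theta}[(\bx_{\star,j} - \bar{\bx}_j(A_n(\bx_\star, \theta)))^2]$, that is, coordinate-wise cell-width moments.

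For the Ensemble, the internal weights $w_1 = \mathbbm{1}(\bx_\star \in [0, 1/2]^p)$ and $w_2 = \mathbbm{1}(\bx_\star \in [1, 3/2]^p)$ ensure that only the forest trained on the cluster matching $\bx_\star$ contributes. That forest is a centered forest on i.i.d.\ uniform data on a side-$1/2$ box, and its partition cell on coordinate $j$ has width $(1/2) 2^{-T_j}$ where $T_j \sim \mathrm{Binomial}(\log_2 k_n, p_{nj})$. Using $\mathbb{E}_\theta[4^{-T_j}] = (1 - 3 p_{nj}/4)^{\log_2 k_n} = k_n^{\log_2(1 - 3 p_{nj}/4)}$ together with the standard uniform-variance computation on the cell, I would sum over $j \in \mathbf{S}$ and average over $\bbeta_{\mathbf{S}}$; the assumption $p_{nj} \to 1/S$ on $\mathbf{S}$ isolates the leading rate $k_n^{\log_2(1 - 3 p_n/4)}$, while $p_{nj} \log k_n \to \infty$ off of $\mathbf{S}$ forces the weak-feature contributions into an $o(1)$ remainder, yielding the claimed $\frac{S}{8} k_n^{\log_2(1 - 3 p_n/4)} [1 + o(1)]$.

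For the Merged, the training data spans both clusters, so the initial bounding box is $[0, 3/2]^p$ and the first split on any coordinate lands at $3/4$, cleanly separating the two disjoint cluster supports. Conditional on $\bx_\star \in [0, 1/2]^p$, with probability approaching one at least one coordinate is split, so the partition cell containing $\bx_\star$ intersects only cluster 1. Tracking that after $T_j$ splits the Merged partition cell has width $(3/2) 2^{-T_j}$ along coordinate $j$, while the training data and $\bx_\star$ occupy only a sub-interval of length $1/2$ within this cell, I will compute the coordinate-wise squared deviation of $\bx_\star$ from $\bar{\bx}_j$; the geometric inflation relative to the Ensemble inflates the leading constant by a factor of two, and the same binomial evaluation then gives $\frac{S}{4} k_n^{\log_2(1 - 3 p_n/4)} [1 + o(1)]$.

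The main obstacle will be the geometric bookkeeping in the Merged case: because the Merged partition's cells are defined relative to $[0, 3/2]^p$ but the training data and $\bx_\star$ only occupy the disjoint cluster supports, the conditional mean $\bar{\bx}_j$ sits off-center relative to the cell midpoint, and I must separately handle the typical case (at least one coordinate split, so the cell contains only one cluster) and the corrections from coordinates that remain unsplit. A secondary technical step is controlling the weak-feature contribution through $p_{nj} \log k_n \to \infty$, which ensures $(1 - 3 p_{nj}/4)^{\log_2 k_n}$ decays strictly faster than the strong-feature rate $k_n^{\log_2(1 - 3 p_n/4)}$ and can be absorbed into the $o(1)$ remainder.
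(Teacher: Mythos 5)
Your reduction to coordinate-wise cell-width moments is not the paper's route, and more importantly it does not deliver the stated constants; the gap is precisely in the step you flag as "the main obstacle." The paper follows the Klusowski-style leading-term identity, writing the squared bias as $n(n-1)\,\E\bigl[\E_{\theta}[W_1](f(\bx_1)-f(\bx_{\star}))\,\E_{\theta}[W_2](f(\bx_2)-f(\bx_{\star}))\bigr]$ over a \emph{pair} of independent training points. After integrating out $\bbeta$ this factors, coordinate by coordinate, into a \emph{square of a conditional first absolute moment}, $\bigl(\E[\,|\bx_1^{(j)}-\bx_{\star}^{(j)}|\cdot(\text{cell indicators})\mid \bx_{\star}]\bigr)^2 \le \bigl(\tfrac14\lambda(A_{nj})\lambda(A_n)\bigr)^2$, rather than the single-point second moment $\E[(\bx_{\star}^{(j)}-\bar{\bx}_j)^2]$ that your Jensen step produces. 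Those are not the same: for a uniform point in a cell of width $\lambda$ against an endpoint, the second moment is $\lambda^2/3$ while the squared first absolute moment is $\lambda^2/4$, so your route already fails to recover the Ensemble constant $S/8$. Moreover, the paper's factor of $2$ between Merged and Ensemble does not come from cell geometry at all: in the paper's model (Remark 1) the box containing $\bx_{\star}$ is a randomly stopped binary expansion with side length $2^{-K_{nj}-1}$ for \emph{both} learners, and the factor of $2$ emerges from bookkeeping of the indicator probabilities — the Ensemble's weights carry the extra events $\{\bx_{i,\mathbf{S}}\in[0,1/2]^S\}$ and $\{\bx_{\star,\mathbf{S}}\in[0,1/2]^S\}$, which change both the binomial success probability entering the Cauchy--Schwarz bound on $\E[1/((1+T)(1+T'))]$ and the conditional moment bound, and the net effect of these competing powers of $4$ is the ratio $\tfrac18$ versus $\tfrac14$.

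Your alternative geometric picture for the Merged learner — root cell $[0,3/2]^p$, first split at $3/4$, cell width $(3/2)2^{-T_j}$ — would, if carried through naively, inflate the squared width by a factor of $9$ relative to the Ensemble's $(1/2)2^{-T_j}$, not $2$. Closing that discrepancy requires quantifying how far the conditional mean $\bar{\bx}_j$ of the training points (supported only on a length-$1/2$ sub-interval of the cell) actually sits from $\bx_{\star}^{(j)}$, and separately disposing of coordinates with $T_j=0$ whose cells straddle both clusters and contribute an $O(1)$ deviation (this is lower order since $k_n^{\log_2(1-p_{nj})}=o(k_n^{\log_2(1-3p_{nj}/4)})$, but it must be said). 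You assert the outcome of this bookkeeping is "a factor of two" without deriving it; since that factor is the entire content of the theorem's comparison, the proposal as written does not establish either bound.
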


We first note the similarity of these bound to the results in \cite{klusowski2020} (and therefore the bound converges to $0$ owing to the assumption of $p_{n,j}\log{k_n}\rightarrow \infty$) and thereafter note that the upper bound for the \textit{Merged} is exactly twice as high as the corresponding quantity for the \textit{Ensemble}. The bounds for both learners depend only on the number of non-sparse variables S, parameter $k_n$, and split probability $p_n$, with the squared bias terms converging at the same rate $O(k_n^{log_2(1 - 3p_n/4)})$. Intuition as to why the \textit{Merged} does not perform as well as the \textit{Ensemble} in this case may be gleaned from the following observation: the \textit{Merged} has the same bound as would be achieved by a single forest trained on observations arising from a $[U(a, a + 1)]^p$ distribution for some constant $a$. That is, the \textit{Merged} ignores the cluster-structure of the data and instead treats the data as arising from the average of its' component distributions, whereas the \textit{Ensemble} produces improvements by explicitly taking the cluster-structure into account. 

\subsubsection{Simulations}\label{sec:simulations}
In this subsection we verify that the intuitions gathered above from the theory considering uniform distribution of covariates within each clusters also extend to other distributions in numerical experiments.
\begin{figure}[ht]
  \centering
  \includegraphics[width=5in]{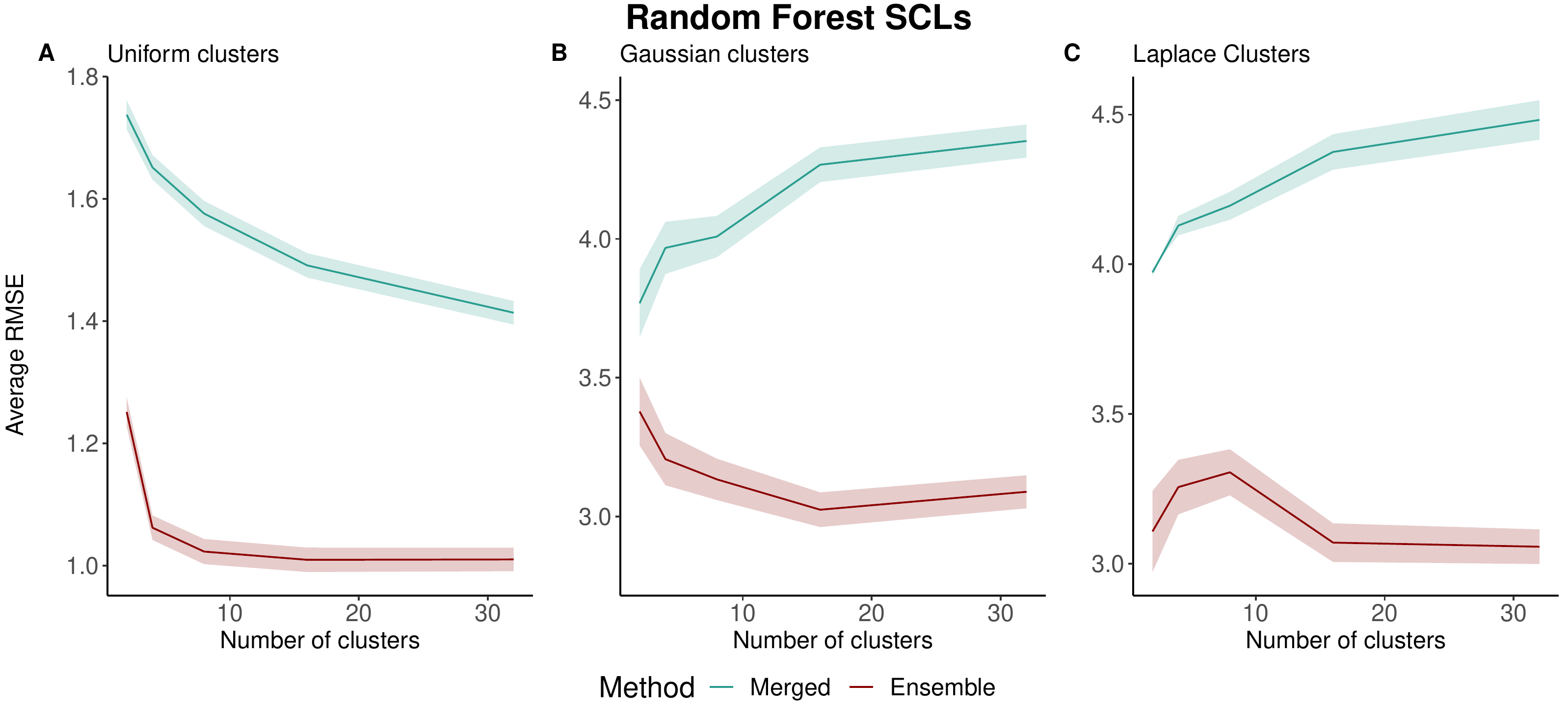}
  \caption{Average RMSE of the \textit{Merged} and the \textit{Ensemble} as a function of the number of clusters in the training set. \textbf{(A)} Uniform clusters \textbf{(B)} Multivariate Gaussian clusters \textbf{(C)} Multivariate Laplace-distributed clusters}
  \label{fig:F2}
\end{figure}
In this regard, Figure \ref{fig:F2} displays the results of simulation studies to experimentally validate the frameworks presented in Theorem \ref{theorem: rf_uniform} and Theorem 5 
in Section A.8 of the Appendix for clustered data arising from three different distributional paradigms. We observe that regardless of distribution, the \textit{Ensemble} produces a significantly lower average RMSE than the \textit{Merged}, and that the difference between the two methods levels out as $K$ increases. Interestingly, across all distributions considered, the \textit{Ensemble} empirically converges to produce about a 33\% improvement over the \textit{Merged} for high $K$, whereas the theoretical upper bounds illustrate a continual increase of the magnitude of improvement in an exponential manner. This indicates that while the relationship between the upper bounds derived in Theorem 5 
are confirmed through the simulations, the theoretical bounds get significantly less tight for higher $K$.

\section{Discussion}\label{sec:discussions}
In this paper, we have provided first steps towards a theoretical understanding of possible benefits and pitfalls of ensembling learners based on clusters within training data compared to ignoring the grouping structure of the covariates. Our result suggests that the benefits vary depending on the nature of underlying algorithm and often plays out differently based on the mutual dominance between the bias and variance. We verify some of the numerical observations made in practice and synthetic data using the cases of linear regression and random forest regression -- each chosen to present to contrasting perspectives. This represents a first effort into providing a theoretical lens to this phenomenon. Several questions remain - in particular, an exploration of the exact asymptotics of the stacking mechanism while ensembling, as well as incorporating actual data driven clustering algorithms (such as k-means) as a pre-processing step, are both interesting open directions worth future study.
\clearpage 
\bibliographystyle{plainnat}
\bibliography{bibliography}

\appendix

\section{Appendix}
Code to reproduce simulations may be found at \href{https://github.com/m-ramchandran/ensembling-vs-merging}{https://github.com/m-ramchandran/ensembling-vs-merging}.
\subsection{Proof of Lemma 1}
Inverse variance weighting is a method of combining multiple random variables to minimize the variance of the weighted average. In the case of linear regression learners, the SCL-level predictions have closed form-variances  $\text{Var}\left[\hat{Y}_t(\bx_{\star})\right] =  \sigma_t^2$ for $t = 1,...,K$. Thus, $\text{Var}(\hat{Y}_E) = \sum_{t = 1}^k {w}_t^2 \sigma_t^2$ such that ${w}_t = 1$ to ensure un-biasedness of the resulting ensemble. 

\begin{proof}
To minimize $\text{Var}(\hat{Y}_E)$ subject to the constraint that $\sum_{t = 1}^K {w}_t = 1$, we use the method of Lagrange multipliers, and express the variance as: 
\begin{align*}
    \text{Var}(\hat{Y}_E) &= \sum_{t = 1}^k {w}_t^2 \sigma_t^2 - a\left(\sum_{t = 1}^K {w}_t - 1\right)
\end{align*}
We minimize by finding the root of this equation when $t > 0$;
\begin{align*}
    0 &= \frac{\partial}{\partial {w}_t} Var(\hat{Y}_E) \\
    &= 2  {w}_t  \sigma_t^2 - a\\
    \Rightarrow {w}_t &= \frac{a/2}{\sigma_t^2}
\end{align*}
Now, using the fact that $\sum_{t = 1}^K {w}_t  = 1$, 
\begin{align*}
    \frac{2}{a} &= \sum_{t = 1}^K \frac{1}{\sigma_t^2}
\end{align*}

Therefore, the individual weights that minimize the variance of the weighted average predictor are:
\begin{align*}
    {w}_{t,\rm opt} &= \left(\sum_{t = 1}^K \frac{1}{\sigma_t^2} \right)^{-1} \frac{1}{\sigma_t^2}
\end{align*}
\end{proof}

\subsection{Proof of Theorem 1}
We start by recalling the formulae for the variances of $\hat{Y}_M$ and $\hat{Y}_{opt}(\bx_{\star})$ for $K=2$
\begin{align}
    \text{Var}\left[ \hat{Y}_{w,\rm opt}(\bx_{\star}) |\bx_{\star}\right] &= \bigg[\sum_{t = 1}^2 \left[\bx_{\star}^T(\X_t^T\X_t)^{-1}\bx_{\star}\right]^{-1}\bigg]^{-1}\label{eqn:cond_var_ens}
\end{align}
\begin{align}
    \text{Var}\left[ \hat{Y}_M(\bx_{\star})|\bx_{\star} \right] &= \bx_{\star}^T\bigg[\sum_{t = 1}^{2}\X_t^T\X_t\bigg]^{-1}\bx_{\star}\label{eqn:cond_var_mer}
\end{align}
Also,  we will write $n_1/n:=1/\lambda_1$ and $n_2/n:=1/\lambda_2=1-1/\lambda_1$ (since $K=2$). We shall now derive individual limits of $\bx_{\star}^T(\X_t^T\X_t)^{-1}\bx_{\star}$ for $t=1,2$ and $\bx_{\star}^T(\sum_{t=1}^2\X_t^T\X_t)^{-1}\bx_{\star}$. 

To this end we shall crucially use the following two lemmas. The first lemma will be used to prove concentration of quadratic forms of $\bx_{\star}$ and is taken from \cite[Lemma B.26]{bai2010spectral}.

\begin{lemma}\label{lemma:quadratic_form_concentration_bai_silverstein}

Let $\mathbf{z}_{\star}=(z_1,\ldots,z_p)\in \mathbb{R}^p$
be a random vector with i.i.d. mean zero entries , for which $\E((\sqrt{p}z_i)^2)=\sigma^2$ and $\sup_{i}\E(|\sqrt{p}z_i|^{4+\eta})<C\infty$ for some $\eta>0$. Moreover, let $A_p$
be a sequence of random $p\times p$ symmetric matrices independent of $\mathbf{z}_{\star}$, with uniformly bounded
eigenvalues. Then the quadratic forms $\mathbf{z}_{\star}^TA_p\mathbf{z}_{\star}$ concentrate around their means at the following
rate
$$\P\left(|\mathbf{z}_{\star}^TA_p\mathbf{z}_{\star}-\frac{\sigma^2}{p}\mathrm{Trace}(A_p)|>\epsilon\right)\leq C_{\eta,\epsilon}p^{-1-\eta/4},$$
where for any $\epsilon,\eta>0$, $C_{\eta,\epsilon}>0$ depends only on $\eta,\epsilon$.
\end{lemma}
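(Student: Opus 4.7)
The plan is to derive the tail bound by Markov's inequality applied to the $(2+\eta/2)$-th absolute moment of the centered quadratic form, with that moment estimate in turn obtained by a martingale-differences decomposition. First, since $A_p$ is independent of $\mathbf{z}_\star$, I would condition on $A_p$: it suffices to prove the inequality pointwise in $A_p$ (with constants depending only on the uniform spectral bound $\Lambda$) and then take an outer expectation. To clean up the arithmetic, I would rescale by setting $w_i := \sqrt{p}\,z_i$ and $\tilde A := A_p / p$, so that $\E[w_i^2] = \sigma^2$, $\sup_i \E |w_i|^{4+\eta} \le C$, $\mathbf{z}_\star^T A_p \mathbf{z}_\star = \mathbf{w}^T \tilde A \mathbf{w}$, and $(\sigma^2/p)\,\mathrm{Trace}(A_p) = \sigma^2\,\mathrm{Trace}(\tilde A)$.

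Next, setting $q := 2 + \eta/2 \in (2,3)$, Markov's inequality gives
\begin{align*}
\P\!\left(\bigl|\mathbf{w}^T \tilde A \mathbf{w} - \sigma^2\,\mathrm{Trace}(\tilde A)\bigr| > \epsilon\right) \;\le\; \epsilon^{-q}\; \E\!\left|\mathbf{w}^T \tilde A \mathbf{w} - \sigma^2\,\mathrm{Trace}(\tilde A)\right|^{q}.
\end{align*}
The pivotal ingredient is then the Bai--Silverstein moment bound
\begin{align*}
\E\!\left|\mathbf{w}^T \tilde A \mathbf{w} - \sigma^2\,\mathrm{Trace}(\tilde A)\right|^{q} \;\le\; C_q \Bigl[\bigl(\mathrm{Trace}(\tilde A \tilde A^*)\bigr)^{q/2} + \mathrm{Trace}\bigl((\tilde A \tilde A^*)^{q/2}\bigr)\Bigr],
\end{align*}
which is available because $2q = 4+\eta$ and $\sup_i \E|w_i|^{2q}<\infty$ by hypothesis.

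Given this bound, the trace estimates are immediate from the uniform eigenvalue control on $A_p$. If the operator norm of $A_p$ is at most $\Lambda$, then the eigenvalues of $\tilde A$ are bounded by $\Lambda/p$, so $\mathrm{Trace}(\tilde A \tilde A^*) \le p(\Lambda/p)^2 = \Lambda^2 / p$ and $\mathrm{Trace}((\tilde A \tilde A^*)^{q/2}) \le p(\Lambda/p)^q = \Lambda^q p^{1-q}$. The first term dominates for $q \in (2,3)$ and contributes $C_q \Lambda^q p^{-q/2} = C_q \Lambda^q p^{-1-\eta/4}$, which after dividing by $\epsilon^q$ delivers the claimed rate $C_{\eta,\epsilon}\, p^{-1-\eta/4}$.

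The main obstacle, and the step I would expect to absorb most of the work, is establishing the moment bound itself. I would split the centered quadratic form into a diagonal piece $\sum_i \tilde A_{ii} (w_i^2 - \sigma^2)$ and an off-diagonal piece $\sum_{i \ne j} \tilde A_{ij} w_i w_j$. The diagonal piece is a sum of independent centered variables and is directly controlled by Rosenthal's inequality at exponent $q$ using the moment bound on $w_i$. The off-diagonal piece I would realize as a sum of martingale differences $D_k := 2 w_k \sum_{j<k} \tilde A_{kj} w_j$ with respect to the filtration generated by $w_1, \ldots, w_k$; Burkholder's inequality at exponent $q$ reduces the control of $\E|\sum_k D_k|^q$ to sums of $\E|D_k|^q$ and of conditional-variance powers, and each of these is estimated by Cauchy--Schwarz together with the moment assumption to produce exactly the $\mathrm{Trace}(\tilde A \tilde A^*)$ and $\mathrm{Trace}((\tilde A\tilde A^*)^{q/2})$ factors. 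Since this derivation is precisely the content of Lemma B.26 of \cite{bai2010spectral}, in practice I would simply invoke it; the outline above indicates how a self-contained derivation would proceed.
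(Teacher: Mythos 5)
Your proposal is correct and follows essentially the same route as the paper, which gives no proof of this lemma and simply imports it from \cite[Lemma B.26]{bai2010spectral}. Your added details are sound: the exponent $q=2+\eta/2$ correctly matches the $(4+\eta)$-moment hypothesis, the reduction of the tail bound to the Bai--Silverstein moment inequality via Markov's inequality together with the trace estimates $\mathrm{Trace}(\tilde A\tilde A^*)\le \Lambda^2/p$ and $\mathrm{Trace}((\tilde A\tilde A^*)^{q/2})\le \Lambda^q p^{1-q}$ yields exactly the rate $p^{-q/2}=p^{-1-\eta/4}$, and conditioning on the random $A_p$ before invoking the deterministic-matrix moment bound is the right way to handle its randomness.
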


Our next lemma will be repeatedly used to justify a lower bound on minimum eigenvalue of $\bbX^T\bbX$ and $\bbX_t^T\bbX_t$ for $t=1,\ldots,K$.

\begin{lemma}\label{lemma:minimum_eigenvalue}
Under the assumptions of Theorem 1, there exists $c>0$ such that with probability converging to $1$ one has $\min_{t=1}^K\left\{\lambda_{\min}(\bbX_t^T\bbX_t/n_t)\right\}\footnote{\text{For any symmetric matrix $A$ we let $\lambda_{\min}(A)$ and $\lambda_{\max}(A)$ denote the minimum and maximum eigenvalues respectively}}\wedge \lambda_{\min}(\bbX^T\bbX/n)\geq c$.
\end{lemma}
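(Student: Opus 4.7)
The plan is to reduce each of the three minimum-eigenvalue statements to the standard Bai--Yin edge bound for i.i.d.\ Gaussian Gram matrices, after absorbing the random mean contributions into a positive semidefinite perfect-square term so that a single application of Weyl's inequality suffices.

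For each cluster $t$, write $\bbX_t = \one_{n_t}\boldsymbol{\mu}_t^T + \bZ_t$, with $\bZ_t$ an $n_t\times p$ matrix of i.i.d.\ $N(0,1)$ entries (guaranteed by the Gaussian assumption in Theorem~1). Set $\bar{z}_t := \bZ_t^T \one_{n_t}/n_t$ and $\tilde{\bZ}_t := (I_{n_t} - n_t^{-1}\one_{n_t}\one_{n_t}^T)\bZ_t$. The defining orthogonality $\one_{n_t}^T\tilde{\bZ}_t = \mathbf{0}^T$ forces every cross term in the expansion of $\bbX_t^T\bbX_t$ to vanish, giving the clean identity
\[
\bbX_t^T\bbX_t/n_t \;=\; (\boldsymbol{\mu}_t+\bar{z}_t)(\boldsymbol{\mu}_t+\bar{z}_t)^T + \tilde{\bZ}_t^T\tilde{\bZ}_t/n_t.
\]
The first summand is rank-one PSD, so Weyl's inequality immediately yields $\lambda_{\min}(\bbX_t^T\bbX_t/n_t)\ge \lambda_{\min}(\tilde{\bZ}_t^T\tilde{\bZ}_t/n_t)$. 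Since $I_{n_t}-n_t^{-1}\one_{n_t}\one_{n_t}^T$ is an orthogonal projector of rank $n_t-1$, rotational invariance of the standard Gaussian law yields $\tilde{\bZ}_t^T\tilde{\bZ}_t \stackrel{d}{=} \bZ_t'^{T}\bZ_t'$ with $\bZ_t'$ an $(n_t-1)\times p$ matrix of i.i.d.\ $N(0,1)$ entries. Under $p/n_t \to \lambda_t\gamma<1$, the Bai--Yin theorem (see e.g.\ \cite{bai2008large}) then gives $\lambda_{\min}(\tilde{\bZ}_t^T\tilde{\bZ}_t/n_t) \to (1-\sqrt{\lambda_t\gamma})^2 > 0$ almost surely.

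For the merged matrix the parallel per-cluster decomposition yields
\[
\bbX^T\bbX/n \;=\; \sum_{t=1}^{2}\tfrac{n_t}{n}(\boldsymbol{\mu}_t+\bar{z}_t)(\boldsymbol{\mu}_t+\bar{z}_t)^T + \tfrac{1}{n}\bZ^T H \bZ,
\]
where $\bZ$ stacks $\bZ_1,\bZ_2$ and $H = (I_{n_1}-n_1^{-1}\one_{n_1}\one_{n_1}^T)\oplus(I_{n_2}-n_2^{-1}\one_{n_2}\one_{n_2}^T)$ is a block-diagonal orthogonal projector of rank $n-2$. Once again the mean term is PSD, so Weyl combined with the same rotational-invariance argument gives $\lambda_{\min}(\bbX^T\bbX/n)\ge \lambda_{\min}(n^{-1}\bZ'^T\bZ')$ with $\bZ'$ an $(n-2)\times p$ i.i.d.\ $N(0,1)$ matrix, and a final appeal to Bai--Yin delivers $\lambda_{\min}(\bbX^T\bbX/n)\to (1-\sqrt{\gamma})^2 > 0$ almost surely. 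Choosing $c = \tfrac12\min\{(1-\sqrt{\gamma})^2,\min_t(1-\sqrt{\lambda_t\gamma})^2\}$ and intersecting the three high-probability events yields the claim.

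The main subtlety worth flagging is the choice of decomposition. A naive Weyl bound applied to $\bbX_t^T\bbX_t = \bZ_t^T\bZ_t + \boldsymbol{\mu}_t\one^T\bZ_t + \bZ_t^T\one\boldsymbol{\mu}_t^T + n_t\boldsymbol{\mu}_t\boldsymbol{\mu}_t^T$ fails: the indefinite cross piece has operator norm of order $\sqrt{n_tp}$, which after scaling by $1/n_t$ is $O(\sqrt{\lambda_t\gamma})$ and in general dominates the Bai--Yin edge value $(1-\sqrt{\lambda_t\gamma})^2$ of $\bZ_t^T\bZ_t/n_t$ (for example already for $\lambda_t\gamma \gtrsim 0.15$). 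Absorbing the random column mean $\bar z_t$ into the leading rank-one square removes this indefinite piece and makes the entire residual mean contribution automatically PSD, which is exactly what lets Weyl finish the job; the block-centering projector $H$ performs the analogous role for the merged Gram matrix.
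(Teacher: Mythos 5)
Your proof is correct and takes essentially the same route as the paper: your identity $\bbX_t^T\bbX_t/n_t=(\boldsymbol{\mu}_t+\bar z_t)(\boldsymbol{\mu}_t+\bar z_t)^T+\tilde{\mathbb{Z}}_t^T\tilde{\mathbb{Z}}_t/n_t$ is exactly the paper's decomposition $\tilde{\Sigma}_t+\hat{v}_t\hat{v}_t^T$, followed by the same step of dropping the positive semidefinite rank-one term and the same Bai--Yin/Marchenko--Pastur edge limit $(1-\sqrt{\lambda_t\gamma})^2>0$. The only (harmless) divergences are that you control the centered Gram matrix via an exact Gaussian rotational-invariance reduction to an $(n_t-1)\times p$ i.i.d.\ matrix where the paper instead cites a finite-rank spectral-equivalence result, and that you re-run the full decomposition for the merged matrix where the paper simply notes that $\bbX^T\bbX$ is a sum of the positive semidefinite cluster Gram matrices and so inherits a positive lower bound from any single summand.
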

The proof of this lemma essentially follows from the Bai-Yin Theorem \cite{bai2008limit} after minor adjustments for the non-zero mean terms $\bmu_t$'s. We present a sketch of a simple argument here for the sake of immediate reference. First note that 

For individual $\lambda_{\min}(\bbX_t^T\bbX_t)$  first  by writing each $\mathbf{x}_i=\mathbf{Z}_i+\bmu_t$ for $i\in \mathbb{S}_t$ (where $\mathbf{Z}_i$ has i.i.d. coordinates of mean-zero and variance $1$ and bounded moments of all order) we have $\mathbb{Z}_t^T=[\mathbf{Z}_1:\cdots:\mathbf{Z}_{n_t}]^T$, $\bar{\mathbf{Z}_t}=\frac{1}{n_t}\sum_{i=1}^{n_t}\mathbf{Z}_i$, and $\hat{v}_t=(\bar{\mathbf{Z}_t}+\mu_t)$. We shall henceforth repeatedly use the fact that  the spectral behavior of $\frac{1}{n_t}\mathbb{Z}_t\mathbb{Z}_t^T-{\bar{\mathbf{Z}_t}}{\bar{\mathbf{Z}_t}^T}$ and $\frac{1}{n_t}\mathbb{Z}_t\mathbb{Z}_t^T$ are asymptotically equivalent (see e.g. \cite[Theorem 2.23]{bloemendal2016principal}).  
In particular,
\begin{align}
    \frac{\bbX_t^T\bbX_t}{n_t}=\tilde{\Sigma}_t+\hat{v}_t\hat{v}_t
    \label{eqn:sherman_morrison_sigma_t}
\end{align},
and therefore 
\begin{align*}
    \lambda_{\min}\left(\frac{\bbX_t^T\bbX_t}{n_t}\right)\geq \lambda_{\min}(\tilde{\Sigma}_t)\stackrel{a.s.}{\rightarrow} \left(1-\frac{1}{\sqrt{\lambda_2\gamma}}\right)^2,
\end{align*}
by \cite[Theorem 2.23]{bloemendal2016principal}. As for $\bbX^T\bbX$ the result immediately follows since it can be written as a sum of individual $\bbX_{t}^T\bbX_t$'s. This completes the proof of Lemma \ref{lemma:minimum_eigenvalue}. 

With Lemma \ref{lemma:quadratic_form_concentration_bai_silverstein} and Lemma \ref{lemma:minimum_eigenvalue} we are now ready to go back to the proof of Theorem 1. Essentially, one immediately have by \ref{lemma:quadratic_form_concentration_bai_silverstein} that for any sequence of random matrices $\hat{A}_{n,p}$ independent from $\bx_{\star}$ and having uniformly bounded operator with high probability for large $n,p$ 
\begin{align*}
    |\frac{1}{p}\bx_{\star}^T\hat{A}_{n,p}\bx_{\star}-\frac{1}{p}\mathrm{Trace}(\hat{A}_{n,p})|\stackrel{\P}{\rightarrow} 0.
\end{align*}
just by defining $\mathbf{z}_{\star}=\frac{\bx_{\star}}{\sqrt{p}}$. Moreover $\left(\frac{\bbX_t^T\bbX_t}{n_t}\right)^{-1}$ and $\left(\frac{\bbX^T\bbX}{n}\right)^{-1}$ have bounded operator norm with probability converging to $1$ and are independent of $\bx_{\star}$. Therefore, coming back to the terms in the conditional variance term of the prediction error (in \eqref{eqn:cond_var_ens}) and \eqref{eqn:cond_var_mer}), we have
\begin{align}
    \bx_{\star}^T\left(\bbX_t^T\bbX_t\right)^{-1}\bx_{\star}-\frac{\lambda_t\gamma}{p}\mathrm{Trace}\left(\frac{\bbX_t^T\bbX_t}{n_t}\right)^{-1}=\frac{p}{n_t}\frac{1}{p}\bx_{\star}^T\left(\frac{\bbX_t^T\bbX_t}{n_t}\right)^{-1}\bx_{\star}-\frac{\lambda_t\gamma}{p}\mathrm{Trace}\left(\frac{\bbX_t^T\bbX_t}{n_t}\right)^{-1}\stackrel{\P}{\rightarrow}0. \label{eqn:cond_var_cond_conc_ens}
\end{align}
The application of Lemma \ref{lemma:quadratic_form_concentration_bai_silverstein}  in the above argument comes simply by defining $\mathbf{z}_{\star}=\frac{\bx_{\star}}{\sqrt{p}}$. Similarly, one also has
\begin{align}
    \bx_{\star}^T\left(\bbX^T\bbX\right)^{-1}\bx_{\star}-\frac{\gamma}{p}\mathrm{Trace}\left(\frac{\bbX^T\bbX}{n}\right)^{-1}=\frac{p}{n}\frac{1}{p}\bx_{\star}^T\left(\frac{\bbX^T\bbX}{n}\right)^{-1}\bx_{\star}-\frac{\gamma}{p}\mathrm{Trace}\left(\frac{\bbX_t^T\bbX_t}{n_t}\right)^{-1}\stackrel{\P}{\rightarrow}0.\label{eqn:cond_var_cond_conc_mer}
\end{align}
Therefore in the remaining proof we simply focus on deriving the limits of $\frac{\lambda_t\gamma}{p}\mathrm{Trace}\left(\frac{\bbX_t^T\bbX_t}{n_t}\right)^{-1}$ for $t=1,\ldots,K$ and $\frac{\gamma}{p}\mathrm{Trace}\left(\frac{\bbX^T\bbX}{n}\right)^{-1}$. To this first note that by Sherman-Morrison Woodbury identity applied to \eqref{eqn:sherman_morrison_sigma_t} we have
\begin{align}
   \frac{\lambda_t\gamma}{p}\mathrm{Trace}\left(\frac{\bbX_t^T\bbX_t}{n_t}\right)^{-1}&= \frac{\lambda_t\gamma}{p}\mathrm{Trace}\left[\tilde{\Sigma}_t^{-1}-\frac{\tilde{\Sigma}_t^{-1}\hat{v}_t\hat{v}_t^T\tilde{\Sigma}_t^{-1}}{1+\hat{v}_t^T\tilde{\Sigma}_t^{-1}\hat{v}_t}\right]. \label{eqn:sherman_morrison_xt}
\end{align}
Now we have that the empirical spectral distribution of $\tilde{\Sigma}_t$ converges weakly almost surely to the standard Marchenko-Pastur distribution (see e.g. \cite[Theorem 2.23]{bloemendal2016principal}) with density $f_{t,\rm MP}:=\frac{1}{2\pi}\frac{\sqrt{(\gamma_{t,+}-x)(x-\gamma_{t,-})}}{\lambda_t\gamma x}$ with $\gamma_{t,\pm}=(1\pm\sqrt{\lambda_t\gamma})^2$ (and also the smallest eigenvalue converges to something strictly positive almost surely). Thereafter using the steps of calculations described in the proof of Proposition 2 of \cite{hastie2019surprises} we have

\begin{align}
    \frac{\lambda_t\gamma}{p}\mathrm{Trace}\left(\tilde{\Sigma}_t^{-1}\right)\stackrel{\P}{\rightarrow} \frac{\lambda_t\gamma}{1-\lambda_t\gamma}, \quad t=1,\ldots,K.\label{eqn:marchenko_pastur_xt}
\end{align}
Moreover, 
\begin{align*}
    \left\vert\frac{\lambda_t\gamma}{p}\mathrm{Trace}\left(\frac{\tilde{\Sigma}_t^{-1}\hat{v}_t\hat{v}_t^T\tilde{\Sigma}_t^{-1}}{1+\hat{v}_t^T\tilde{\Sigma}_t^{-1}\hat{v}_t}\right)\right\vert\leq \frac{\lambda_t\gamma}{p}\hat{v}_t^T\tilde{\Sigma}_t^{-2}\hat{v}_t\leq \frac{\lambda_t\gamma\|\hat{v}_t\|_2^2\|\tilde{\Sigma}_t^{-2}\|_{\rm op}}{p}
\end{align*},
where we have used $\|\cdot\|_{\rm op}$ to denote the operator norm of a matrix. Now we have by Lemma \ref{lemma:minimum_eigenvalue} followed by \cite[Theorem 2.23]{bloemendal2016principal} that
$\|\tilde{\Sigma}_t^{-2}\|_{\rm op}=O_{\P}(1)$ (i.e. stays bounded in probability) as $n,p\rightarrow \infty$. Moreover $\|\hat{v}\|_t^2\leq 2\|\bar{\mathbf{Z}}_t\|_2^2+2\|\bmu_t\|_2^2=O_{\P}(1)$ since $\bmu_t$'s have uniformly bounded norms by assumptions of the theorem and by standard Markov's Inequality for $\|\bar{\mathbf{Z}}_t\|_2^2$ because this can be written as $\sum_{j=1}^p\left(\frac{1}{n_t}\sum_{i=1}^{n_t}Z_{ij}\right)^2$ which equals a sum of $p$-independent $\chi_1^2/n_t$ random variables and hence have mean $p/n_t\rightarrow \lambda_t\gamma$ and variance $2p/n_t^2\rightarrow 0$ (and thereby implying order $1$ behavior). Therefore, we have $\|\hat{v}_t\|_2^2\|\tilde{\Sigma}_t^{-2}\|_{\rm op}=O_{\P}(1)$ as $n_t,n,p\rightarrow \infty$ according to the stipulations of Theorem 1. This immediately implies that $\frac{\lambda_t\gamma}{p}\mathrm{Trace}\left(\frac{\tilde{\Sigma}_t^{-1}\hat{v}_t\hat{v}_t^T\tilde{\Sigma}_t^{-1}}{1+\hat{v}_t^T\tilde{\Sigma}_t^{-1}\hat{v}_t}\right)\stackrel{\P}{\rightarrow} 0$ and therefore by \eqref{eqn:sherman_morrison_xt}, \eqref{eqn:marchenko_pastur_xt}, and \eqref{eqn:marchenko_pastur_xt} we have that
\begin{align}
    \bx_{\star}(\bbX_t^T\bbX_t)^{-1}\bx_{\star}\stackrel{\P}{\rightarrow}\frac{\lambda_t\gamma}{1-\lambda_t\gamma}, \ t=1,\ldots,K.\label{eqn:pred_limit_xt} \\
\end{align}
Next we turn to $ \bx_{\star}(\bbX^T\bbX)^{-1}\bx_{\star}$ and one use \eqref{eqn:cond_var_cond_conc_mer} and can prove using the ideas in \cite{benaych2016spectral} that $\bx_{\star}(\bbX^T\bbX)^{-1}\bx_{\star}\rightarrow \frac{\gamma}{1-\gamma}$. Here for the sake of clarity and intuition why this result is true, we provide a simple calculation which demonstrates the result for $K=2$, $n_1=n_2=n/2$, and small enough $\gamma>0$ and $\|\bmu_1\|_2,\|\bmu_2\|_2$ (i.e. the argument below goes through for some $c>0$ whenever $\gamma>0$ and $\|\bmu_1\|_2,\|\bmu_2\|_2$ are all less than $c>0$ and for more general $c>0$ an argument along the lines of \cite{benaych2016spectral} yields the desired result). In particular, using these simplifying assumptions and the notation $\hat{\Sigma}=\sum_{i=1}^n\mathbf{Z}_i\mathbf{Z}_i^T$ we have by some algebra that
\begin{align*}
    (\bbX^T\bbX/n)&=\hat{\Sigma}+\frac{1}{2}(\bar{\mathbf{Z}}_1+\bmu_1)(\bar{\mathbf{Z}}_1+\bmu_1)^T+\frac{1}{2}(\bar{\mathbf{Z}}_2+\bmu_2)(\bar{\mathbf{Z}}_1+\bmu_1)^T-\frac{1}{4}\left(\bar{\mathbf{Z}}_1-\bar{\mathbf{Z}}_2\right)\left(\bar{\mathbf{Z}}_1-\bar{\mathbf{Z}}_2\right)^T\\
    &=\hat{\Sigma}+\hat{U}D\hat{U}^T
\end{align*}
where $\hat{U}=[(\bar{\mathbf{Z}}_1+\bmu_1):(\bar{\mathbf{Z}}_2+\bmu_2):(\bar{\mathbf{Z}}_1-\bar{\mathbf{Z}}_2)]$ and $D=\mathrm{diag}(1/2,1/2,-1/4)$. Therefore we have by Woodbury identity that
\begin{align*}
\frac{\gamma}{p}\mathrm{Trace}(\bbX^T\bbX/n)^{-1}=\frac{\gamma}{p}\mathrm{Trace}\left[\hat{\Sigma}^{-1}-\hat{\Sigma}^{-1}\hat{U}(D^{-1}+\hat{U}^T\hat{\Sigma}^{-1}\hat{U})^{-1}\hat{U}^T\hat{\Sigma}^{-1}\right].
\end{align*}
Next note that
\begin{align*}
   | \frac{1}{p}\mathrm{Trace}\left[\hat{\Sigma}^{-1}\hat{U}(D^{-1}+\hat{U}^T\hat{\Sigma}^{-1}\hat{U})^{-1}\hat{U}^T\hat{\Sigma}^{-1}\right]|&=| \frac{1}{p}\mathrm{Trace}\left[(D^{-1}+\hat{U}^T\hat{\Sigma}^{-1}\hat{U})^{-1}\hat{U}^T\hat{\Sigma}^{-2}\hat{U}\right]|
\end{align*}
Now since $\lambda_{\min}(\hat{\Sigma}^{-1})\rightarrow (1-\sqrt{\gamma})^2$ in probability, for any $\delta>0$, there exists $c_{\delta}>0$ such that $\gamma,\|\bmu_1\|_2,\|\bmu_2\|_2\leq c_{\delta}$ implies that $\|\hat{U}^TD^{-1}\hat{U}\|_{\rm op}<\frac{1}{4}-\delta$ with probability larger than $1-\delta$ (this comes with direct computation and the concentration of $\|\bar{\mathbf{Z}_t}\|_2^2$ around $\lambda_t\gamma_t$). Also, by operator norm bound and concentration of the norms of the columns of $\hat{U}$ we have $\frac{1}{p}\mathrm{Trace}\left[\hat{\Sigma}^{-1}\hat{U}(D^{-1}+\hat{U}^T\hat{\Sigma}^{-1}\hat{U})^{-1}\hat{U}^T\hat{\Sigma}^{-1}\right]\stackrel{\P}{\rightarrow} 0$. Therefore, 
\begin{align*}
    \frac{\gamma}{p}\mathrm{Trace}(\bbX^T\bbX/n)=\frac{\gamma}{p}\mathrm{Trace}(\hat{\Sigma}^{-1})+o_{\mathbb{P}}(1)\stackrel{\P}{\rightarrow}\frac{\gamma}{1-\gamma}.
\end{align*}
The last line above follows by direct calculations with the limiting Marchenko-Pastur law of the empirical spectral distribution of $\hat{\Sigma}$ (see e.g. the calculations \cite{hastie2019surprises}). This completes the proof for small enough $\gamma>0$ and $\|\bmu_1\|_2,\|\bmu_2\|_2$ and balanced two clusters. A similar proof can be extended to general $K$ clusters without the simplifications for the two balanced cluster case can be obtained for small enough values of $\gamma>0$ and $\|\bmu_t\|_2$'s as follows. 

We elaborate on an initial surgery first. To this end we first claim that one can write
\begin{align}
    \frac{\gamma}{p}\mathrm{Trace}\left(\frac{\bbX^T\bbX}{n}\right)^{-1}&=\frac{\gamma}{p}\mathrm{Trace}\left[\frac{1}{n}\sum_{i\in \cup_{t=1}^K\mathbb{S}_t}\mathbf{Z}_i\mathbf{Z}_i^T+\hat{U}\hat{V}^{T}\right]^{-1},
    \label{eqn:sherman_morrison_sigma}
\end{align}
where recall that we write $\bX_i=\mathbf{Z}_i\bmu_t$ for $i\in \mathbb{S}_t$ and the pair $(U_{p\times 3K},V_{p \times 3K})$ can be defined as follows: for $t=0,\ldots,K-1$ define $U_{\cdot,3t+1}=U_{\cdot,3t+2}=\frac{1}{\sqrt{\lambda_t}}\mu_{t+1}^T$ and $U_{\cdot,3(t+1)}=\frac{1}{\sqrt{\lambda_t}}\bar{\mathbf{Z}}_{t+1}^T$, and $V^T_{3t+1,\cdot}=V_{3(t+1),\cdot}=\frac{1}{\sqrt{\lambda_t}}\mu_{t+1}^T$ and $V_{3t+2,\cdot}=\frac{1}{\sqrt{\lambda_t}}\bar{\mathbf{Z}}_{t+1}^T$ \footnote{For any $m_1\times m_2$ matrix $A$ we let $A_{\cdot,j}$ and $A_{i,\cdot}$  denote its $j^{\rm th}$ column and $i^{\rm th}$ row respectively for $i=1,\ldots,m_1$ and $j=1,\ldots, m_2$}. Applying Woodbury Identity to \eqref{eqn:sherman_morrison_sigma} and realizing that $\frac{1}{n}\sum_{i\in \cup_{t=1}^K\mathbb{S}_t}\mathbf{Z}_i\mathbf{Z}_i^T:=\hat{\Sigma}$ is the variance-covariance mattrix of a standard $n\times p$ Gaussian matrix, we have by Woodbury's identity that
\begin{align*}
     \frac{\gamma}{p}\mathrm{Trace}\left(\frac{\bbX^T\bbX}{n}\right)^{-1}&=
     \frac{\gamma}{p}\mathrm{Trace}\left[\hat{\Sigma}^{-1}-\hat{\Sigma}^{-1}\hat{U}\left\{I+\hat{V}^T\hat{\Sigma}^{-1}\hat{U}\right\}^{-1}\hat{V}^T\hat{\Sigma}^{-1}\right]\\
     &=\frac{\gamma}{p}\mathrm{Trace}(\hat{\Sigma}^{-1})-\gamma\frac{\mathrm{Trace}\left\{\left(I+\hat{V}^T\hat{\Sigma}^{-1}\hat{U}\right)^{-1}\hat{V}^T\hat{\Sigma}^{-2}\hat{U}\right\}}{p}
\end{align*}
where we have used the fact that $\hat{\Sigma}$ is invertible (by \cite{eaton1973non} and discussion above). Now one again for small enough values of $\gamma>0$ and $\|\bmu_t\|_2$'s
\begin{align*}
   \left\vert\frac{\mathrm{Trace}\left\{\left(I+\hat{V}^T\hat{\Sigma}^{-1}\hat{U}\right)^{-1}\hat{V}^T\hat{\Sigma}^{-2}\hat{U}\right\}}{p}\right\vert \stackrel{\P}{\rightarrow} 0,
\end{align*}
by arguments similar to the $K=2$ case. The rest of the proof follows similarly. Therefore, we have presented the proofs also for the general $K$ case albeit for small enough values of $\gamma>0$ and $\|\bmu_t\|_2$'s. The most general proof allowing for more general values of $\gamma>0$ and $\|\bmu_t\|_2$'s can be obtained by appealing to \cite{benaych2016spectral}.


\subsection{Lemma \ref{lemma: randomness}}
\begin{lemma}\label{lemma: randomness}
Assume that $n_t$ is the number of samples in the $t^{th}$ cluster $\mathbb{S}_t$ and $\E(\mathbf{x}_i)=\bmu_t$ and $\mathrm{Var}(\mathbf{x}_i)=I$ for $i\in \mathbb{S}_t$ such that $\frac{n_t}{n}-1/\lambda_t=o(1)$ for $t=1,\ldots,K$.  Also assume that $\bx_{\star}$ is normal with mean zero and variance $I$. Then the following hold form any finite $p$ as $n\rightarrow \infty$
\begin{enumerate}
    \item [(i)] 
    \begin{align*}
      \frac{\mathrm{Var}\left[ \hat{Y}_{w,\rm opt}(\bx_{\star})|\bx_{\star}\right] }{d_1}
    &\stackrel{\P}{\rightarrow} 1 .
    \end{align*}
    \item [(ii)]
    \begin{align*}
     \frac{\mathrm{Var}\left[ \hat{Y}_M(\bx_{\star}) |\bx_{\star}\right]}{d_2}
     &\stackrel{\P}{\rightarrow} 1.
\end{align*}
\end{enumerate}

where 
\begin{align*}
    d_1 &= \bigg[\sum_{t = 1}^k \left(\frac{1}{n_t}\left[(p-1) + \frac{1}{1  + \|\bmu_{\mathbf{t}}\|^2}\right]\right)^{-1} \bigg]^{-1} \\
    d_2 &= \frac{1}{n} \mathrm{Trace} \left( I + \sum_{t = 1}^k \frac{n_t}{n} \bmu_{\mathbf{t}}\bmu_{\mathbf{t}}^T\right)^{-1}
\end{align*}
\end{lemma}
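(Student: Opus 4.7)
The plan is to replace the sample second--moment matrices by their deterministic population limits via a law--of--large--numbers argument, invert using the Sherman--Morrison formula, and then identify the resulting quadratic forms in $\bx_{\star}$ with $d_1$ and $d_2$.

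First I would establish, for each cluster $t$, the entrywise weak law of large numbers
\begin{align*}
\tfrac{1}{n_t}\bbX_t^T\bbX_t \;\stackrel{\P}{\longrightarrow}\; \E[\mathbf{x}_i\mathbf{x}_i^T] \;=\; I + \bmu_t\bmu_t^T,
\end{align*}
which is valid since $p$ is finite and each entry is an average of $n_t$ i.i.d.\ scalars with the correct mean. Because $I+\bmu_t\bmu_t^T$ has minimum eigenvalue at least $1$, the continuous mapping theorem combined with Sherman--Morrison $(I+\bmu_t\bmu_t^T)^{-1} = I - \bmu_t\bmu_t^T/(1+\|\bmu_t\|^2)$ yields
\begin{align*}
\bigl(\tfrac{1}{n_t}\bbX_t^T\bbX_t\bigr)^{-1} \;\stackrel{\P}{\longrightarrow}\; I - \frac{\bmu_t\bmu_t^T}{1+\|\bmu_t\|^2}.
\end{align*}

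For part (i), rewrite $\bx_{\star}^T(\bbX_t^T\bbX_t)^{-1}\bx_{\star} = n_t^{-1}\bx_{\star}^T\bigl(\tfrac{1}{n_t}\bbX_t^T\bbX_t\bigr)^{-1}\bx_{\star}$, so the above display gives
\begin{align*}
n_t\,\bx_{\star}^T(\bbX_t^T\bbX_t)^{-1}\bx_{\star} \;\stackrel{\P}{\longrightarrow}\; \|\bx_{\star}\|^2 - \frac{(\bmu_t^T\bx_{\star})^2}{1+\|\bmu_t\|^2}.
\end{align*}
Since $\bx_{\star}\sim N(\mathbf{0},I)$, we have $\E\|\bx_{\star}\|^2 = p$ and $\E(\bmu_t^T\bx_{\star})^2 = \|\bmu_t\|^2$, so the right--hand side has expected value exactly $(p-1) + \tfrac{1}{1+\|\bmu_t\|^2}$. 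Taking reciprocals, summing over $t$, inverting once more, and applying Slutsky recovers $d_1$. Part (ii) proceeds in parallel: $\tfrac{1}{n}\sum_t \bbX_t^T\bbX_t \stackrel{\P}{\to} I + \sum_t \tfrac{n_t}{n}\bmu_t\bmu_t^T$ by the same LLN (using $n_t/n \to 1/\lambda_t$), and hence
\begin{align*}
n\,\bx_{\star}^T\Bigl[\textstyle\sum_t \bbX_t^T\bbX_t\Bigr]^{-1}\bx_{\star} \;\stackrel{\P}{\longrightarrow}\; \bx_{\star}^T\Bigl(I + \textstyle\sum_t \tfrac{n_t}{n}\bmu_t\bmu_t^T\Bigr)^{-1}\bx_{\star},
\end{align*}
whose expectation over $\bx_{\star}$ is $\mathrm{Trace}\bigl(I + \sum_t \tfrac{n_t}{n}\bmu_t\bmu_t^T\bigr)^{-1}$, yielding $d_2$ after the $1/n$ factor.

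The main subtlety is the mode of convergence. For strictly fixed $p$ the limiting quadratic form $\bx_{\star}^T(I+\bmu_t\bmu_t^T)^{-1}\bx_{\star}$ is a genuinely random variable and does not collapse to its mean $(p-1)+1/(1+\|\bmu_t\|^2)$ almost surely, so the literal ``in probability'' assertion of the lemma should be read as convergence of the expectation of the conditional variance (which is unambiguously $d_1$ and $d_2$); alternatively one may allow $p$ to grow mildly so that Gaussian concentration of quadratic forms (e.g.\ Hanson--Wright) makes $p^{-1}\bx_{\star}^T A \bx_{\star}$ concentrate about $p^{-1}\mathrm{Trace}(A)$. Under either reading the mechanics above constitute the entire argument, with only Slutsky's theorem and a union across the $K$ clusters needed to wrap up.
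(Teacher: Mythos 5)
Your proposal is correct and follows essentially the same route as the paper's own proof: an entrywise law of large numbers for $\tfrac{1}{n_t}\bbX_t^T\bbX_t$, inversion of $I+\bmu_t\bmu_t^T$ via Sherman--Morrison, and identification of the resulting quadratic forms in $\bx_{\star}$ with $d_1$ and $d_2$. The subtlety you flag in your last paragraph is genuine and is, if anything, handled less carefully in the paper: the paper also replaces $\tfrac{1}{n_t}\bx_{\star}^T(I+\bmu_t\bmu_t^T)^{-1}\bx_{\star}$ by $\tfrac{1}{n_t}\mathrm{Trace}\left[(I+\bmu_t\bmu_t^T)^{-1}\right]$ and asserts an $O_{\P}(n_t^{-3/2})$ deviation, but its own variance computation gives $O(1/n_t^2)$, i.e.\ a deviation of the same order $1/n_t$ as the mean, so for strictly fixed $p$ the stated ratio-convergence in probability does not literally follow and must be read (as you suggest) either as convergence of expectations over $\bx_{\star}$ or under mild growth of $p$ with Hanson--Wright concentration.
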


\begin{proof}
\begin{enumerate}
    \item [(i)]
We begin by considering the term $\bx_{\star}^T(\X_t^T\X_t)^{-1}\bx_{\star}$ and note that
\begin{align*}
    \bx_{\star}^T(\X_t^T\X_t)^{-1}\bx_{\star} &= \frac{1}{n_t} \bx_{\star}^T\left(\frac{\X_t^T\X_t}{n^t}\right)^{-1}\bx_{\star}
\end{align*}
Recall that $n_t$ denotes the number of observations in the $t^{th}$ training cluster, and $\X_t = [\bx_{\mathbf{1}}, ..., \bx_{\mathbf{n_t}}]^T$, where $\bx_{\mathbf{i}}$ is a  $p \times 1$  vector for $i =  1,...,n_t$. Then, by Standard CLT
\begin{align*}
    \frac{\X_t^T\X_t}{n^t} &= \frac{1}{n_t}\sum_{i\in \mathbb{S}_t} \bX_{i}^T \bX_{i}+O_{\P}(1/\sqrt{n_t}) \quad \text{(in both Operator and Frobenius norm)}\\
    &= \mathrm{Var}[\bX_{1}] + \E[\bX_{1}]\E[\bX_{1}]^T+O_{\P}(1/\sqrt{n_t}) \\
    &=  I + \bmu_{\mathbf{t}} \bmu_{\mathbf{t}}^T+O_{\P}(1/\sqrt{n_t})
\end{align*}
Therefore, it is easy to see that
\begin{align*}
    \frac{1}{n_t}  \left[ \bx_{\star}^T\left(\frac{\X_t^T\X_t}{n^t}\right)^{-1}\bx_{\star}\right] - \frac{1}{n_t} \bx_{\star}^T (\mathrm{I} + \bmu_{\mathbf{t}} \bmu_{\mathbf{t}}^T)^{-1} \bx_{\star}=O_{\P}(1/\sqrt{n_t^3}).
\end{align*}
In the last line we have used the operator norm inequality to use the fact that $\left(\frac{\X_t^T\X_t}{n^t}\right)^{-1}$ converges to $(\mathrm{I} + \bmu_{\mathbf{t}} \bmu_{\mathbf{t}}^T)^{-1}$ in operator norm, the error is $O_{\P}(1/\sqrt{n_t})$ in operator norm, and the fact that $\frac{\|\bx_{\star}\|_2^2}{n_t}=O_{\mathbb{P}}(1/n_t)$ under our assumptions. We therefore focus on the quantity $\frac{1}{n_t} \bx_{\star}^T (\mathrm{I} + \bmu_{\mathbf{t}} \bmu_{\mathbf{t}}^T)^{-1} \bx_{\star}$ and find its expectation and variance to demonstrate suitable concentration. Also since $\bx_{\star}\sim N(\mathbf{0},I)$ we have $\E\left(\frac{1}{n_t} \bx_{\star}^T (\mathrm{I} + \bmu_{\mathbf{t}} \bmu_{\mathbf{t}}^T)^{-1} \bx_{\star}\right)\sim \frac{1}{n_t} \text{Trace}\left[(\rm I + \bmu_{\mathbf{t}} \bmu_{\mathbf{t}}^T)^{-1}\right]=\Theta(1/n_t)$ (where the notation $\Theta$ defines exact order). Also by direct calculation $\mathrm{Var}\left(\frac{1}{n_t} \bx_{\star}^T (\mathrm{I} + \bmu_{\mathbf{t}} \bmu_{\mathbf{t}}^T)^{-1} \bx_{\star}\right)\sim \frac{1}{n_t^2} \text{Trace}\left[(\rm I + \bmu_{\mathbf{t}} \bmu_{\mathbf{t}}^T)^{-1}\right]^2=O(1/n_t^2)$. This implies that 
\begin{align*}
    \frac{1}{n_t}  \left[ \bx_{\star}^T\left(\frac{\X_t^T\X_t}{n^t}\right)^{-1}\bx_{\star}\right] - \frac{1}{n_t} \text{Trace}\left[(\rm I + \bmu_{\mathbf{t}} \bmu_{\mathbf{t}}^T)^{-1}\right]=O_{\P}(1/\sqrt{n_t^3})
\end{align*}
This completes the proof of the first result.

\item [(ii)] For the proof of this part, first note that by similar argument of as part (i) we have
\begin{align*}
    \mathrm{Var}(\hat{Y}_M(\bx_{\star})|\bx_{\star})=\bx_{\star}^T(\bbX^T\bbX)^{-1}\bx_{\star}=\frac{1}{n}\bx_{\star}\mathrm{Trace}\left(I+\sum_{t=1}^K\lambda_t^{-1}\bmu_t\bmu_t\right)^{-1}\bx_{\star}=o_{\P}(1/n).
\end{align*}
The rest of the proof then follows verbatim as before.

\end{enumerate}
\end{proof}

\subsection{Lemma \ref{lemma: twoclusters}}
We simplify the expected variances of the \textit{Ensemble} and the \textit{Merged} from Lemma \ref{lemma: randomness} in the case in which there are two clusters within the training set.
\begin{lemma}\label{lemma: twoclusters}
Assume $K = 2$. We denote $\tilde\lambda_t = \frac{n_t}{n}$ for $t = 1,  2$; by  construction, $\lambda_1  +  \lambda_2 =  1$. Then, for any finite $p$ we have the following: 
\begin{enumerate}
    \item [(i)]\begin{align*}
     \frac{\mathrm{Var}\left[ \hat{Y}_{w,\rm opt}(\bx_{\star})|\bx_{\star}\right]}{s_1} &\stackrel{\P}{\rightarrow}  1
\end{align*}
    \item [(ii)] \begin{align*} 
     \frac{\mathrm{Var}\left[ \hat{Y}_{M}(\bx_{\star})|\bx_{\star}\right]}{s_2}
     &\stackrel{\P}{\rightarrow} 1
\end{align*}
where 
\begin{align*}
    s_1 &= \frac{1}{n}\left[(p - 1) + \frac{(p-1)[1 +\tilde\lambda_2 \bmu_{\mathbf{1}}^T \bmu_{\mathbf{1}}  + \tilde\lambda_1 \bmu_{\mathbf{2}}^T \bmu_{\mathbf{2}}] + 1}{(p - 1) (1  + \bmu_{\mathbf{1}}^T \bmu_{\mathbf{1}}) (1  + \bmu_{\mathbf{2}}^T \bmu_{\mathbf{2}}) + 1  + \tilde\lambda_1\bmu_{\mathbf{1}}^T \bmu_{\mathbf{1}} + \tilde\lambda_2\bmu_{\mathbf{2}}^T \bmu_{\mathbf{2}}} \right] \\
    s_2 &= \frac{1}{n}\left[p -  \frac{\tilde\lambda_1 \bmu_{\mathbf{1}}^T \bmu_{\mathbf{1}}}{1 + \tilde\lambda_1 \bmu_{\mathbf{1}}^T \bmu_{\mathbf{1}}}  - \frac{\tilde\lambda_2 \bmu_{\mathbf{2}}^T\bmu_{\mathbf{2}} - \frac{2\tilde\lambda_1 \tilde\lambda_2}{1 + \tilde\lambda_1 \bmu_{\mathbf{1}}^T\bmu_{\mathbf{1}}} \bmu_{\mathbf{2}}^T \bmu_{\mathbf{2}} \bmu_{\mathbf{1}}^T \bmu_{\mathbf{1}}  + \frac{ \tilde\lambda_2  \tilde\lambda_1^2}{(1 + \tilde\lambda_1 \bmu_{\mathbf{1}}^T \bmu_{\mathbf{1}})^2}(\bmu_{\mathbf{1}}^T \bmu_{\mathbf{1}})^2}{1 + \tilde\lambda_2  \bmu_{\mathbf{2}}^T \bmu_{\mathbf{2}} - \left(\frac{\tilde\lambda_1 \tilde\lambda_2 }{1 + \tilde\lambda_1 \bmu_{\mathbf{1}}^T \bmu_{\mathbf{1}}}\right) \bmu_{\mathbf{2}}^T \bmu_{\mathbf{1}} \bmu_{\mathbf{1}}^T \bmu_{\mathbf{2}}}\right] 
\end{align*}
\end{enumerate}
\end{lemma}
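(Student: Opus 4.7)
The plan is to invoke Lemma \ref{lemma: randomness} with $K=2$ to reduce the problem to purely algebraic simplifications of $d_1$ and $d_2$ into the stated forms $s_1$ and $s_2$. Since the convergence-in-probability claim is already supplied by Lemma \ref{lemma: randomness}, no new probabilistic tools are required; the whole proof is computational.

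For part (i), specializing the formula for $d_1$ to $K=2$ gives
\begin{equation*}
d_1 \;=\; \left[\frac{n_1}{a_1} + \frac{n_2}{a_2}\right]^{-1} \;=\; \frac{1}{n}\cdot\frac{a_1 a_2}{\tilde\lambda_1 a_2 + \tilde\lambda_2 a_1},
\end{equation*}
where $a_t := (p-1) + b_t$ and $b_t := 1/(1+\|\bmu_t\|^2)$. Setting aside the leading $(p-1)$ and using $\tilde\lambda_1+\tilde\lambda_2 = 1$, I would compute the numerator of $nd_1-(p-1)$ as
\begin{equation*}
a_1 a_2 - (p-1)(\tilde\lambda_1 a_2 + \tilde\lambda_2 a_1) \;=\; (p-1)(\tilde\lambda_1 b_1 + \tilde\lambda_2 b_2) + b_1 b_2,
\end{equation*}
after which multiplying numerator and denominator through by $(1+\|\bmu_1\|^2)(1+\|\bmu_2\|^2)$ should produce exactly the expression for $s_1$. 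This is a line or two of elementary algebra.

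For part (ii), I would compute $d_2 = \frac{1}{n}\mathrm{Tr}\bigl(I + \tilde\lambda_1 \bmu_1\bmu_1^T + \tilde\lambda_2 \bmu_2\bmu_2^T\bigr)^{-1}$ by applying the Sherman--Morrison formula twice. Writing $M := I + \tilde\lambda_1\bmu_1\bmu_1^T$, a first application gives $M^{-1} = I - \frac{\tilde\lambda_1}{1+\tilde\lambda_1\|\bmu_1\|^2}\bmu_1\bmu_1^T$, and hence $\mathrm{Tr}(M^{-1}) = p - \frac{\tilde\lambda_1\|\bmu_1\|^2}{1+\tilde\lambda_1\|\bmu_1\|^2}$. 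A second application handles the rank-one update by $\tilde\lambda_2\bmu_2\bmu_2^T$, yielding
\begin{equation*}
\mathrm{Tr}\bigl(M + \tilde\lambda_2\bmu_2\bmu_2^T\bigr)^{-1} \;=\; \mathrm{Tr}(M^{-1}) \;-\; \frac{\tilde\lambda_2\,\|M^{-1}\bmu_2\|^2}{1 + \tilde\lambda_2\,\bmu_2^T M^{-1}\bmu_2}.
\end{equation*}
The remaining quadratic forms are read off from $M^{-1}\bmu_2 = \bmu_2 - \frac{\tilde\lambda_1(\bmu_1^T\bmu_2)}{1+\tilde\lambda_1\|\bmu_1\|^2}\bmu_1$, giving $\bmu_2^T M^{-1}\bmu_2 = \|\bmu_2\|^2 - \frac{\tilde\lambda_1(\bmu_1^T\bmu_2)^2}{1+\tilde\lambda_1\|\bmu_1\|^2}$ and a similar three-term expansion for $\|M^{-1}\bmu_2\|^2$. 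Substituting and combining recovers the stated formula for $s_2$.

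The genuinely non-trivial inputs (the two concentration statements $\mathrm{Var}[\hat Y_{w,\mathrm{opt}}(\bx_\star)|\bx_\star]/d_1 \to 1$ and $\mathrm{Var}[\hat Y_M(\bx_\star)|\bx_\star]/d_2 \to 1$ in probability) are already established by Lemma \ref{lemma: randomness}, so the only obstacle here is careful bookkeeping through the Sherman--Morrison expansion in part (ii) and verifying that the cross-terms group correctly. In particular, I would be attentive to keeping $(\bmu_1^T\bmu_2)^2$ distinct from $\|\bmu_1\|^2\|\bmu_2\|^2$ throughout, since these coincide only in degenerate (collinear) configurations; all such simplifications should be made transparent in the final display.
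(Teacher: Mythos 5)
Your route is the same as the paper's: reduce everything to Lemma \ref{lemma: randomness} with $K=2$, then do algebra — part (i) by direct combination of the two reciprocals, part (ii) by two successive Sherman--Morrison updates and a trace computation. Part (i) of your plan reproduces the paper's calculation essentially verbatim.

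The one substantive point is exactly the cross term you flag at the end. The paper's own derivation of $s_2$ evaluates $\mathrm{Trace}\left[\bmu_2\bmu_2^T\,\bmu_1\bmu_1^T\right]$ via the identity $\mathrm{Trace}[AB]=\mathrm{Trace}[A]\,\mathrm{Trace}[B]$, which is false in general; for rank-one factors the correct value is $\bmu_2^T\bmu_1\bmu_1^T\bmu_2=(\bmu_1^T\bmu_2)^2$, which is what your expansion of $\|M^{-1}\bmu_2\|^2$ produces. (The paper does use the correct form $\bmu_2^T\bmu_1\bmu_1^T\bmu_2$ in the denominator constant $B$, so the displayed $s_2$ is internally inconsistent: numerator and denominator treat the same quantity differently.) Carried out as you describe, your computation therefore yields $s_2$ with $(\bmu_1^T\bmu_2)^2$ in place of $\bmu_1^T\bmu_1\,\bmu_2^T\bmu_2$ in the middle term of the numerator; the two coincide only when $\bmu_1$ and $\bmu_2$ are collinear. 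This is not a gap in your argument — it is the correct answer — but it means you would be proving a corrected version of the lemma rather than the formula verbatim, and the discrepancy propagates: the proof of Theorem \ref{theorem: linreg_merged_vs_ensemble} substitutes $\bmu_1^T\bmu_1=\bmu_2^T\bmu_2=1$ into that term with no assumption on $\bmu_1^T\bmu_2$, so the constant $\kappa$ obtained there would change under your (correct) bookkeeping. You should state the corrected $s_2$ explicitly and note the downstream adjustment.
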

\begin{proof}
\begin{enumerate}
    \item [(i)]
By Lemma \ref{lemma: randomness}, the limiting $\mathrm{Var}\left[ \hat{Y}_{w,\rm opt}(\bx_{\star})|\bx_{\star}\right]$ when K = 2 is asymptotically equivalent to:
\begin{align*}
    \ & \bigg[n_1\left[(p-1) + \frac{1}{1  + \|\bmu_{\mathbf{1}}\|^2}\right]^{-1} + n_2\left[(p-1) + \frac{1}{1  + \|\bmu_{\mathbf{2}}\|^2}\right]^{-1}\bigg]^{-1} \\
    &= \bigg[n \tilde\lambda_1 \left[(p-1) + \frac{1}{1  + \bmu_{\mathbf{1}}^T \bmu_{\mathbf{1}}}\right]^{-1} + n \tilde\lambda_2 \left[(p-1) + \frac{1}{1  + \bmu_{\mathbf{2}}^T \bmu_{\mathbf{2}}}\right]^{-1}\bigg]^{-1} \\
    &= \frac{1}{n}\left[\frac{\tilde\lambda_1}{(p-1) + \frac{1}{1  + \bmu_{\mathbf{1}}^T \bmu_{\mathbf{1}}}} + \frac{\tilde\lambda_2}{(p-1) + \frac{1}{1  + \bmu_{\mathbf{2}}^T \bmu_{\mathbf{2}}}} \right]^{-1} \\
    &= \frac{1}{n}\left[\frac{\tilde\lambda_1(p-1)  +  \frac{\tilde\lambda_1}{1  + \bmu_{\mathbf{2}}^T \bmu_{\mathbf{2}}} +  \tilde\lambda_2(p-1) + \frac{\tilde\lambda_1}{1  + \bmu_{\mathbf{1}}^T \bmu_{\mathbf{1}}}}{\left[(p-1) + \frac{1}{1  + \bmu_{\mathbf{1}}^T \bmu_{\mathbf{1}}}\right]\times \left[(p-1) + \frac{1}{1  + \bmu_{\mathbf{2}}^T \bmu_{\mathbf{2}}}\right]} \right]^{-1} \\
    &= \frac{1}{n}\left[\frac{\left[(p-1) + \frac{1}{1  + \bmu_{\mathbf{1}}^T \bmu_{\mathbf{1}}}\right]\times \left[(p-1) + \frac{1}{1  + \bmu_{\mathbf{2}}^T \bmu_{\mathbf{2}}}\right]}{\tilde\lambda_1(p-1)  +  \frac{\tilde\lambda_1}{1  + \bmu_{\mathbf{2}}^T \bmu_{\mathbf{2}}} +  \tilde\lambda_2(p-1) + \frac{\tilde\lambda_1}{1  + \bmu_{\mathbf{1}}^T \bmu_{\mathbf{1}}}} \right] 
\end{align*}
The numerator of the above fraction is: 
\begin{align*}
    &\left[(p-1) + \frac{1}{1  + \bmu_{\mathbf{1}}^T \bmu_{\mathbf{1}}}\right]\times \left[(p-1) + \frac{1}{1  + \bmu_{\mathbf{2}}^T \bmu_{\mathbf{2}}}\right]  \\
    &= (p - 1)^2 + \frac{p-1}{1  + \bmu_{\mathbf{2}}^T \bmu_{\mathbf{2}}} + \frac{p-1}{1  + \bmu_{\mathbf{1}}^T \bmu_{\mathbf{1}}} + \frac{1}{(1  + \bmu_{\mathbf{1}}^T \bmu_{\mathbf{1}}) (1  + \bmu_{\mathbf{2}}^T \bmu_{\mathbf{2}})} \\
    &=  \frac{(p - 1)^2(1  + \bmu_{\mathbf{1}}^T \bmu_{\mathbf{1}}) (1  + \bmu_{\mathbf{2}}^T \bmu_{\mathbf{2}}) + (p - 1)[2 + \bmu_{\mathbf{1}}^T \bmu_{\mathbf{1}}  + \bmu_{\mathbf{2}}^T \bmu_{\mathbf{2}}] +  1}{(1  + \bmu_{\mathbf{1}}^T \bmu_{\mathbf{1}}) (1  + \bmu_{\mathbf{2}}^T \bmu_{\mathbf{2}})}\\
    &= \frac{[p - 1]\bigg[(p-1)(1  + \bmu_{\mathbf{1}}^T \bmu_{\mathbf{1}}) (1  + \bmu_{\mathbf{2}}^T \bmu_{\mathbf{2}}) + [1 + \tilde\lambda_1 \bmu_{\mathbf{1}}^T \bmu_{\mathbf{1}}  + \tilde\lambda_2 \bmu_{\mathbf{2}}^T \bmu_{\mathbf{2}}] + [1 + \tilde\lambda_2 \bmu_{\mathbf{1}}^T \bmu_{\mathbf{1}}  +\tilde \lambda_1 \bmu_{\mathbf{2}}^T \bmu_{\mathbf{2}}]\bigg]+ 1}{(1  + \bmu_{\mathbf{1}}^T \bmu_{\mathbf{1}}) (1  + \bmu_{\mathbf{2}}^T \bmu_{\mathbf{2}})}
\end{align*}
The denominator of the above fraction is: 
\begin{align*}
   & \tilde\lambda_1(p-1)  +  \frac{\tilde\lambda_1}{1  + \bmu_{\mathbf{2}}^T \bmu_{\mathbf{2}}} +  \tilde\lambda_2(p-1) + \frac{\tilde\lambda_1}{1  + \bmu_{\mathbf{1}}^T \bmu_{\mathbf{1}}} \\
   &= \frac{(p - 1) (1  + \bmu_{\mathbf{1}}^T \bmu_{\mathbf{1}}) (1  + \bmu_{\mathbf{2}}^T \bmu_{\mathbf{2}}) + \tilde\lambda_1(1  + \bmu_{\mathbf{1}}^T \bmu_{\mathbf{1}}) + \tilde\lambda_2(1  + \bmu_{\mathbf{2}}^T \bmu_{\mathbf{2}})}{ (1  + \bmu_{\mathbf{1}}^T \bmu_{\mathbf{1}}) (1  + \bmu_{\mathbf{2}}^T \bmu_{\mathbf{2}})} \\
   &= \frac{(p - 1) (1  + \bmu_{\mathbf{1}}^T \bmu_{\mathbf{1}}) (1  + \bmu_{\mathbf{2}}^T \bmu_{\mathbf{2}}) + 1  + \tilde\lambda_1\bmu_{\mathbf{1}}^T \bmu_{\mathbf{1}} + \tilde\lambda_2\bmu_{\mathbf{2}}^T \bmu_{\mathbf{2}}}{ (1  + \bmu_{\mathbf{1}}^T \bmu_{\mathbf{1}}) (1  + \bmu_{\mathbf{2}}^T \bmu_{\mathbf{2}})}
\end{align*}
Putting this all together:
\begin{align*}
    & \frac{1}{n}\left[\frac{\left[(p-1) + \frac{1}{1  + \bmu_{\mathbf{1}}^T \bmu_{\mathbf{1}}}\right]\times \left[(p-1) + \frac{1}{1  + \bmu_{\mathbf{2}}^T \bmu_{\mathbf{2}}}\right]}{\tilde\lambda_1(p-1)  +  \frac{\tilde\lambda_1}{1  + \bmu_{\mathbf{2}}^T \bmu_{\mathbf{2}}} +  \tilde\lambda_2(p-1) + \frac{\tilde\lambda_1}{1  + \bmu_{\mathbf{1}}^T \bmu_{\mathbf{1}}}} \right] \\
    &= \frac{1}{n}\left[\frac{[p - 1]\bigg[(p-1)(1  + \bmu_{\mathbf{1}}^T \bmu_{\mathbf{1}}) (1  + \bmu_{\mathbf{2}}^T \bmu_{\mathbf{2}}) + [1 + \tilde\lambda_1 \bmu_{\mathbf{1}}^T \bmu_{\mathbf{1}}  + \tilde\lambda_2 \bmu_{\mathbf{2}}^T \bmu_{\mathbf{2}}] + [1 + \tilde\lambda_2 \bmu_{\mathbf{1}}^T \bmu_{\mathbf{1}}  + \tilde\lambda_1 \bmu_{\mathbf{2}}^T \bmu_{\mathbf{2}}]\bigg]+ 1}{(p - 1) (1  + \bmu_{\mathbf{1}}^T \bmu_{\mathbf{1}}) (1  + \bmu_{\mathbf{2}}^T \bmu_{\mathbf{2}}) + 1  + \tilde\lambda_1\bmu_{\mathbf{1}}^T \bmu_{\mathbf{1}} + \tilde\lambda_2\bmu_{\mathbf{2}}^T \bmu_{\mathbf{2}}}  \right] \\
    &= \frac{1}{n}\left[(p - 1) + \frac{(p-1)[1 + \tilde\lambda_2 \bmu_{\mathbf{1}}^T \bmu_{\mathbf{1}}  + \tilde\lambda_1 \bmu_{\mathbf{2}}^T \bmu_{\mathbf{2}}] + 1}{(p - 1) (1  + \bmu_{\mathbf{1}}^T \bmu_{\mathbf{1}}) (1  + \bmu_{\mathbf{2}}^T \bmu_{\mathbf{2}}) + 1  + \tilde\lambda_1\bmu_{\mathbf{1}}^T \bmu_{\mathbf{1}} + \tilde\lambda_2\bmu_{\mathbf{2}}^T \bmu_{\mathbf{2}}} \right]
\end{align*}

\item [(ii)]

To simplify the expected variance of the \textit{Merged}, we begin by considering the expression $(\rm I + \tilde\lambda_1  \bmu_{\mathbf{1}} \bmu_{\mathbf{1}}^T + \tilde\lambda_2  \bmu_{\mathbf{2}} \bmu_{\mathbf{2}}^T)^{-1}$. To simplify, we use the Sherman-Morrison Inverse formula, which states that for matrix $A_{p \times p}$ and vectors $u, v \in \mathbb{R}^p$, 
\begin{align*}
    (A + uv^T)^{-1} = A^{-1} - \frac{A^{-1}uv^TA^{-1}}{1 + v^TA^{-1}u}
\end{align*}
We apply the Sherman-Morrison formula twice in succession; for the first round, $A = \rm I + \tilde\lambda_1 \bmu_{\mathbf{1}} \bmu_{\mathbf{1}}^T, u = \tilde\lambda_2 \bmu_{\mathbf{2}},$ and $v = \bmu_{\mathbf{2}}$:
\begin{align*}
    (\rm I + \tilde\lambda_1  \bmu_{\mathbf{1}} \bmu_{\mathbf{1}}^T + \tilde\lambda_2  \bmu_{\mathbf{2}} \bmu_{\mathbf{2}}^T)^{-1} &=  (\rm I + \tilde\lambda_1  \bmu_{\mathbf{1}} \bmu_{\mathbf{1}}^T)^{-1}  - \frac{(\rm I + \tilde\lambda_1  \bmu_{\mathbf{1}} \bmu_{\mathbf{1}}^T)^{-1}\tilde\lambda_2 \bmu_{\mathbf{2}} \bmu_{\mathbf{2}}^T (\rm I + \tilde\lambda_1  \bmu_{\mathbf{1}} \bmu_{\mathbf{1}}^T)^{-1}}{1 + \tilde\lambda_2  \bmu_{\mathbf{2}}^T (\rm I + \tilde\lambda_1  \bmu_{\mathbf{1}} \bmu_{\mathbf{1}}^T)^{-1} \bmu_{\mathbf{2}}} 
\end{align*}
We apply the formula again to the term $(\rm I + \tilde\lambda_1  \bmu_{\mathbf{1}} \bmu_{\mathbf{1}}^T)^{-1}$, now, $A = \rm I, u = \tilde\lambda_1 \bmu_{\mathbf{1}}$, and $v = \bmu_{\mathbf{1}}$:
\begin{align*}
    (\rm I + \tilde\lambda_1  \bmu_{\mathbf{1}} \bmu_{\mathbf{1}}^T)^{-1} &= \rm I - \frac{\tilde\lambda_1 \bmu_{\mathbf{1}} \bmu_{\mathbf{1}}^T}{1 + \tilde\lambda_1 \bmu_{\mathbf{1}}^T \bmu_{\mathbf{1}}}
\end{align*}

We plug this into the previous expression:
\begin{align*}
   & (\rm I + \tilde\lambda_1  \bmu_{\mathbf{1}} \bmu_{\mathbf{1}}^T)^{-1}  - \frac{(\rm I + \tilde\lambda_1  \bmu_{\mathbf{1}} \bmu_{\mathbf{1}}^T)^{-1}\tilde\lambda_2 \bmu_{\mathbf{2}} \bmu_{\mathbf{2}}^T (\rm I + \tilde\lambda_1  \bmu_{\mathbf{1}} \bmu_{\mathbf{1}}^T)^{-1}}{1 + \tilde\lambda_2  \bmu_{\mathbf{2}}^T (\rm I + \tilde\lambda_1  \bmu_{\mathbf{1}} \bmu_{\mathbf{1}}^T)^{-1} \bmu_{\mathbf{2}}} \\
    &= \rm I - \frac{\tilde\lambda_1 \bmu_{\mathbf{1}} \bmu_{\mathbf{1}}^T}{1 + \tilde\lambda_1 \bmu_{\mathbf{1}}^T \bmu_{\mathbf{1}}}  - \frac{\left(\rm I - \frac{\tilde\lambda_1 \bmu_{\mathbf{1}} \bmu_{\mathbf{1}}^T}{1 + \tilde\lambda_1 \bmu_{\mathbf{1}}^T \bmu_{\mathbf{1}}}\right)\tilde\lambda_2 \bmu_{\mathbf{2}} \bmu_{\mathbf{2}}^T \left(\rm I - \frac{\tilde\lambda_1 \bmu_{\mathbf{1}} \bmu_{\mathbf{1}}^T}{1 + \tilde\lambda_1 \bmu_{\mathbf{1}}^T \bmu_{\mathbf{1}}}\right)}{1 + \tilde\lambda_2  \bmu_{\mathbf{2}}^T \left(\rm I - \frac{\tilde\lambda_1 \bmu_{\mathbf{1}} \bmu_{\mathbf{1}}^T}{1 + \tilde\lambda_1 \bmu_{\mathbf{1}}^T \bmu_{\mathbf{1}}}\right) \bmu_{\mathbf{2}}} 
\end{align*}
We now take the trace of the above expression, and use the linearity of the trace operator to first consider the trace of each of the component three  terms. We furthermore use the fact that for vector $\nu$, Trace$[\nu \nu^T] = \nu^T\nu$. \\ The first term:
\begin{align*}
    \text{Trace}\left[\rm I\right]  = p
\end{align*}
The second term:
\begin{align*}
    \text{Trace}\left[\frac{\tilde\lambda_1 \bmu_{\mathbf{1}} \bmu_{\mathbf{1}}^T}{1 + \tilde\lambda_1 \bmu_{\mathbf{1}}^T \bmu_{\mathbf{1}}}\right] &= \frac{\tilde\lambda_1}{1 + \tilde\lambda_1 \bmu_{\mathbf{1}}^T \bmu_{\mathbf{1}}}\text{Trace} \left[\bmu_{\mathbf{1}} \bmu_{\mathbf{1}}^T \right] \\
    &= \frac{\tilde\lambda_1 \bmu_{\mathbf{1}}^T \bmu_{\mathbf{1}}}{1 + \tilde\lambda_1 \bmu_{\mathbf{1}}^T \bmu_{\mathbf{1}}}
\end{align*}
For the third term, first  denote the  constant $1 + \lambda_2  \bmu_{\mathbf{2}}^T \left(\rm I - \frac{\tilde\lambda_1 \bmu_{\mathbf{1}} \bmu_{\mathbf{1}}^T}{1 +\tilde\lambda_1 \bmu_{\mathbf{1}}^T \bmu_{\mathbf{1}}}\right) \bmu_{\mathbf{2}}$ as B: 
\begin{align*}
    &\text{Trace}\left[\frac{\left(\rm I - \frac{\tilde\lambda_1 \bmu_{\mathbf{1}} \bmu_{\mathbf{1}}^T}{1 + \tilde\lambda_1 \bmu_{\mathbf{1}}^T \bmu_{\mathbf{1}}}\right)\tilde\lambda_2 \bmu_{\mathbf{2}} \bmu_{\mathbf{2}}^T \left(\rm I - \frac{\tilde\lambda_1 \bmu_{\mathbf{1}} \bmu_{\mathbf{1}}^T}{1 + \tilde\lambda_1 \bmu_{\mathbf{1}}^T \bmu_{\mathbf{1}}}\right)}{1 + \tilde\lambda_2  \bmu_{\mathbf{2}}^T \left(\rm I - \frac{\tilde\lambda_1 \bmu_{\mathbf{1}} \bmu_{\mathbf{1}}^T}{1 + \tilde\lambda_1 \bmu_{\mathbf{1}}^T \bmu_{\mathbf{1}}}\right) \bmu_{\mathbf{2}}}  \right]  \\
    &=  \frac{1}{B}\text{Trace}\left[\left(\rm I - \frac{\tilde\lambda_1 \bmu_{\mathbf{1}} \bmu_{\mathbf{1}}^T}{1 + \tilde\lambda_1 \bmu_{\mathbf{1}}^T \bmu_{\mathbf{1}}}\right)\tilde\lambda_2 \bmu_{\mathbf{2}} \bmu_{\mathbf{2}}^T \left(\rm I - \frac{\tilde\lambda_1 \bmu_{\mathbf{1}} \bmu_{\mathbf{1}}^T}{1 + \tilde\lambda_1 \bmu_{\mathbf{1}}^T \bmu_{\mathbf{1}}}\right) \right] \\
    &= \frac{1}{B}\text{Trace}\left[\tilde\lambda_2 \bmu_{\mathbf{2}} \bmu_{\mathbf{2}}^T \left(\rm I - \frac{\tilde\lambda_1 \bmu_{\mathbf{1}} \bmu_{\mathbf{1}}^T}{1 + \tilde\lambda_1 \bmu_{\mathbf{1}}^T \bmu_{\mathbf{1}}}\right)^2\right] \text{ , by cyclic property of the Trace operator} \\
    &= \frac{1}{B}\text{Trace}\left[\tilde\lambda_2 \bmu_{\mathbf{2}} \bmu_{\mathbf{2}}^T\left(\rm I -  \frac{2  \tilde\lambda_1 \bmu_{\mathbf{1}} \bmu_{\mathbf{1}}^T}{1 + \tilde\lambda_1 \bmu_{\mathbf{1}}^T \bmu_{\mathbf{1}}} + \frac{\lambda_1^2 (\bmu_{\mathbf{1}} \bmu_{\mathbf{1}}^T)^2}{(1 + \tilde\lambda_1 \bmu_{\mathbf{1}}^T \bmu_{\mathbf{1}})^2} \right) \right]  \\
     &= \frac{1}{B} \left[\tilde\lambda_2 \bmu_{\mathbf{2}}^T\bmu_{\mathbf{2}} - \frac{2\tilde\lambda_1 \tilde\lambda_2}{1 + \tilde\lambda_1 \bmu_{\mathbf{1}}^T\bmu_{\mathbf{1}}} \text{Trace}\left[\bmu_{\mathbf{2}} \bmu_{\mathbf{2}}^T \bmu_{\mathbf{1}} \bmu_{\mathbf{1}}^T\right]  + \frac{ \tilde\lambda_2  \lambda_1^2}{(1 + \tilde\lambda_1 \bmu_{\mathbf{1}}^T \bmu_{\mathbf{1}})^2}\text{Trace}\left[(\bmu_{\mathbf{1}} \bmu_{\mathbf{1}}^T)^2\right] \right]
\end{align*}
To simplify this above expression, we note the following identities:
\begin{align*}
 \text{Trace}\left[(\bmu_{\mathbf{1}} \bmu_{\mathbf{1}}^T)^2\right] &= \left(\text{Trace}\left[\bmu_{\mathbf{1}} \bmu_{\mathbf{1}}^T \right]\right)^2\\
 &= (\bmu_{\mathbf{1}}^T \bmu_{\mathbf{1}})^2
 \end{align*}
 and similarly,
\begin{align*}
     \text{Trace}\left[(\bmu_{\mathbf{2}} \bmu_{\mathbf{2}}^T)(\bmu_{\mathbf{1}} \bmu_{\mathbf{1}}^T)\right]
     &=  \text{Trace}\left[(\bmu_{\mathbf{2}} \bmu_{\mathbf{2}}^T)\right] \text{Trace}\left[(\bmu_{\mathbf{1}} \bmu_{\mathbf{1}}^T)\right] \\
     &= \bmu_{\mathbf{2}}^T \bmu_{\mathbf{2}} \bmu_{\mathbf{1}}^T \bmu_{\mathbf{1}}
 \end{align*}
The third term then becomes: 
\begin{align*}
     \frac{1}{A} \left[\tilde\lambda_2 \bmu_{\mathbf{2}}^T\bmu_{\mathbf{2}} - \frac{\tilde\lambda_1 \tilde\lambda_2}{1 + \tilde\lambda_1 \bmu_{\mathbf{1}}^T\bmu_{\mathbf{1}}} \bmu_{\mathbf{2}}^T \bmu_{\mathbf{2}} \bmu_{\mathbf{1}}^T \bmu_{\mathbf{1}}  + \frac{ \tilde\lambda_2  \tilde\lambda_1^2}{(1 + \tilde\lambda_1 \bmu_{\mathbf{1}}^T \bmu_{\mathbf{1}})^2}(\bmu_{\mathbf{1}}^T \bmu_{\mathbf{1}})^2 \right]
\end{align*}
Finally, putting it all together, 
\begin{align*}
    &\frac{1}{n}\text{Trace}\left[(\rm I + \tilde\lambda_1  \bmu_{\mathbf{1}} \bmu_{\mathbf{1}}^T + \tilde\lambda_2  \bmu_{\mathbf{2}} \bmu_{\mathbf{2}}^T)^{-1}\right]\\
   &= \frac{1}{n}\text{Trace}\left[\rm I - \frac{\tilde\lambda_1 \bmu_{\mathbf{1}} \bmu_{\mathbf{1}}^T}{1 + \tilde\lambda_1 \bmu_{\mathbf{1}}^T \bmu_{\mathbf{1}}}  - \frac{\left(\rm I - \frac{\tilde\lambda_1 \bmu_{\mathbf{1}} \bmu_{\mathbf{1}}^T}{1 + \tilde\lambda_1 \bmu_{\mathbf{1}}^T \bmu_{\mathbf{1}}}\right)\tilde\lambda_2 \bmu_{\mathbf{2}} \bmu_{\mathbf{2}}^T \left(\rm I - \frac{\tilde\lambda_1 \bmu_{\mathbf{1}} \bmu_{\mathbf{1}}^T}{1 + \tilde\lambda_1 \bmu_{\mathbf{1}}^T \bmu_{\mathbf{1}}}\right)}{1 + \tilde\lambda_2  \bmu_{\mathbf{2}}^T \left(\rm I - \frac{\tilde\lambda_1 \bmu_{\mathbf{1}} \bmu_{\mathbf{1}}^T}{1 + \tilde\lambda_1 \bmu_{\mathbf{1}}^T \bmu_{\mathbf{1}}}\right) \bmu_{\mathbf{2}}}  \right]\\
   &= \frac{1}{n}\left[p -  \frac{\tilde\lambda_1 \bmu_{\mathbf{1}}^T \bmu_{\mathbf{1}}}{1 + \tilde\lambda_1 \bmu_{\mathbf{1}}^T \bmu_{\mathbf{1}}}  - \frac{\tilde\lambda_2 \bmu_{\mathbf{2}}^T\bmu_{\mathbf{2}} - \frac{2\tilde\lambda_1 \tilde\lambda_2}{1 + \tilde\lambda_1 \bmu_{\mathbf{1}}^T\bmu_{\mathbf{1}}} \bmu_{\mathbf{2}}^T \bmu_{\mathbf{2}} \bmu_{\mathbf{1}}^T \bmu_{\mathbf{1}}  + \frac{ \tilde\lambda_2  \tilde\lambda_1^2}{(1 + \tilde\lambda_1 \bmu_{\mathbf{1}}^T \bmu_{\mathbf{1}})^2}(\bmu_{\mathbf{1}}^T \bmu_{\mathbf{1}})^2}{1 + \tilde\lambda_2  \bmu_{\mathbf{2}}^T \left(\rm I - \frac{\tilde\lambda_1 \bmu_{\mathbf{1}} \bmu_{\mathbf{1}}^T}{1 + \tilde\lambda_1 \bmu_{\mathbf{1}}^T \bmu_{\mathbf{1}}}\right) \bmu_{\mathbf{2}}}\right] \\
     &= \frac{1}{n}\left[p -  \frac{\tilde\lambda_1 \bmu_{\mathbf{1}}^T \bmu_{\mathbf{1}}}{1 + \tilde\lambda_1 \bmu_{\mathbf{1}}^T \bmu_{\mathbf{1}}}  - \frac{\tilde\lambda_2 \bmu_{\mathbf{2}}^T\bmu_{\mathbf{2}} - \frac{2\tilde\lambda_1 \tilde\lambda_2}{1 + \tilde\lambda_1 \bmu_{\mathbf{1}}^T\bmu_{\mathbf{1}}} \bmu_{\mathbf{2}}^T \bmu_{\mathbf{2}} \bmu_{\mathbf{1}}^T \bmu_{\mathbf{1}}  + \frac{ \tilde\lambda_2  \tilde\lambda_1^2}{(1 + \tilde\lambda_1 \bmu_{\mathbf{1}}^T \bmu_{\mathbf{1}})^2}(\bmu_{\mathbf{1}}^T \bmu_{\mathbf{1}})^2}{1 + \tilde\lambda_2  \bmu_{\mathbf{2}}^T \bmu_{\mathbf{2}} - \left(\frac{\tilde\lambda_1 \tilde\lambda_2 }{1 + \tilde\lambda_1 \bmu_{\mathbf{1}}^T \bmu_{\mathbf{1}}}\right) \bmu_{\mathbf{2}}^T \bmu_{\mathbf{1}} \bmu_{\mathbf{1}}^T \bmu_{\mathbf{2}}}\right]
\end{align*}
\end{enumerate}
\end{proof}

\subsection{Proof of Theorem 2}
\begin{proof}
We consider the case in which $K = 2$ and $\|\bmu_{\mathbf{1}}\| = \|\bmu_{\mathbf{2}}\| = 1$ (that is, $\bmu_{\mathbf{1}}^T \bmu_{\mathbf{1}} = \bmu_{\mathbf{2}}^T \bmu_{\mathbf{2}} = 1$) and $\tilde\lambda_1=\tilde\lambda_2=1/2$, and simplify the quantities in Lemma \ref{lemma: twoclusters}. 
\begin{enumerate}
    \item [(i)]
 $n\mathrm{Var}\left[ \hat{Y}_{w,\rm opt}(\bx_{\star})|\bx_{\star}\right]$ is asymptotically becomes:
\begin{align*}
p - 1 + \frac{(p - 1)[1 + \tilde\lambda_1  + \tilde\lambda_2] + 1}{4(p-1) + 1 + \tilde\lambda_1 + \tilde\lambda_2} &=  p-1 + \frac{2(p-1)+ 1}{4(p-1) + 2}\\
    &= p -  \frac{1}{2} 
\end{align*}
\item [(ii)]
 $n\mathrm{Var}\left[ \hat{Y}_{M}(\bx_{\star})|\bx_{\star}\right]$ is asymptotically becomes:
\begin{align*}
     p - \frac{\tilde\lambda_1}{1 + \tilde\lambda_1} - \frac{\tilde\lambda_2  - \frac{2\tilde\lambda_1  \lambda_2}{1 + \tilde\lambda_1} + \frac{\tilde\lambda_1^2\tilde\lambda_2}{(1 +  \tilde\lambda_1)^2}}{1 - \tilde\lambda_2 - \frac{\tilde\lambda_1 \tilde\lambda_2}{1 + \tilde\lambda_1}}  &=p-\frac{1 + 2\tilde\lambda_1 \tilde\lambda_2 - \frac{3\tilde\lambda_1^2\tilde\lambda_2 + 2\tilde\lambda_1 \tilde\lambda_2}{1 + \tilde\lambda_1} + \frac{\tilde\lambda_1^2\tilde\lambda_2 + \tilde\lambda_1^3 \tilde\lambda_2}{(1 + \tilde\lambda_1)^2}}{2 + \frac{\tilde\lambda_1^2 \tilde\lambda_2 - \tilde\lambda_1\tilde\lambda_2}{1 + \tilde\lambda_1} + \tilde\lambda_1 \tilde\lambda_2}.
\end{align*}
\end{enumerate}

Now, we show that second term above is larger than $\frac{1}{2}$, and therefore that  the expected  variance of the \textit{Merged} is lower than that of the \textit{Ensemble}. Let $a= 1 + 2\tilde\lambda_1 \tilde\lambda_2 - \frac{3\tilde\lambda_1^2\tilde\lambda_2 + 2\tilde\lambda_1 \tilde\lambda_2}{1 + \tilde\lambda_1} + \frac{\tilde\lambda_1^2\tilde\lambda_2 + \tilde\lambda_1^3 \tilde\lambda_2}{(1 + \tilde\lambda_1)^2}$ (the numerator), and $b = 2 + \frac{\tilde\lambda_1^2 \tilde\lambda_2 - \tilde\lambda_1\tilde\lambda_2}{1 + \tilde\lambda_1} + \tilde\lambda_1 \tilde\lambda_2$ (the denominator). Showing that $\frac{a}{b} > \frac{1}{2}$ is equivalent to showing  that $b - 2a < 0$.
\begin{align*}
    b - 2a &= \frac{7\tilde\lambda_1^2\tilde\lambda_2  + 3\tilde\lambda_1\tilde\lambda_2}{1 + \tilde\lambda_1} - 3\tilde\lambda_1 \tilde\lambda_2  -  2\left[\frac{\tilde\lambda_1^2\tilde\lambda_2 + \tilde\lambda_1^3\tilde\lambda_2}{(1 + \tilde\lambda_1)^2}  \right] \\
    &= \frac{(7\tilde\lambda_1^2\tilde\lambda_2  + 3\tilde\lambda_1\tilde\lambda_2)(1 + \tilde\lambda_1) - 3\tilde\lambda_1 \tilde\lambda_2(1 + \tilde\lambda_1)^2 - 2[\tilde\lambda_1^2\tilde\lambda_2 + \tilde\lambda_1^3\tilde\lambda_2]}{(1 + \tilde\lambda_1)^2} \\
    &= \frac{-4\tilde\lambda_1^2\tilde\lambda_2 + 2\tilde\lambda_1^3 \lambda_2 - 6\tilde\lambda_1 \tilde\lambda_2}{(1 + \tilde\lambda_1)^2}\\
    &< 0
\end{align*}
Therefore, 
\begin{align*}
    p-\frac{1 + 2\tilde\lambda_1 \tilde\lambda_2 - \frac{3\tilde\lambda_1^2\tilde\lambda_2 + 2\tilde\lambda_1 \tilde\lambda_2}{1 + \tilde\lambda_1} + \frac{\tilde\lambda_1^2\tilde\lambda_2 + \tilde\lambda_1^3 \tilde\lambda_2}{(1 + \tilde\lambda_1)^2}}{2 + \frac{\tilde\lambda_1^2 \tilde\lambda_2 - \tilde\lambda_1\tilde\lambda_2}{1 + \tilde\lambda_1} + \tilde\lambda_1 \tilde\lambda_2} &< p -  \frac{1}{2} , 
\end{align*}
and thereby proving the desired conclusion.
\end{proof}

\subsection{Expressions for the \textit{Ensemble} and \textit{Merged} predictions
}
\label{sec:expressions}
We provide expressions for the respective predictions of the \textit{Ensemble} and \textit{Merged} on new point $\bx_{\star}$. For any vector $\bx$, we denote by $\bx_{\mathbf{S}} \in \mathbb{R}^{S}$ the covariates corresponding to only the $S$ strong features within $\bx$.

\begin{enumerate}
    \item [(i)] 

The predictions of the two cluster-level forests within the \textit{Ensemble} learner on new point $\bx_{\star}$ can be expressed as
\begin{align*}
    \hat{Y}_1(\bx_{\star}; \theta, \mathcal{D}_n) &= \sum_{i = 1}^n Y_i\mathbbm{E}_{\theta}\left[ W_{i1}(\bx_{\star}, \theta)\right] \\
    \hat{Y}_2(\bx_{\star}; \theta, \mathcal{D}_n) &= \sum_{i = 1}^n Y_i\mathbbm{E}_{\theta}\left[ W_{i2}(\bx_{\star}, \theta)\right] 
\end{align*}
where
\begin{align*}
    W_{i1}(\bx_{\star}, \theta) &= \frac{\mathbbm{1}_{\{\bx_{i} \in A_n(\bx_{\star } , \theta)\}} \mathbbm{1}_{\{\bx_{\mathbf{i} \mathbf{S}} \in [0, \frac{1}{2}]^S\}} \mathbbm{1}_{\{\bx_{\star \mathbf{S}} \in [0, \frac{1}{2}]^S\}}}{N_1(\bx_{\star \mathbf{S}} , \theta)} \mathbbm{1}_{\{\epsilon_{n1}(\bx_{\star \mathbf{S}} , \theta)\}} \\
    W_{i2}(\bx_{\star}, \theta) &= \frac{\mathbbm{1}_{\{\bx_{i} \in A_n(\bx_{\star} , \theta)\}} \mathbbm{1}_{\{\bx_{\mathbf{i} \mathbf{S}} \in [1, \frac{3}{2}]^S\}} \mathbbm{1}_{\{\bx_{\star \mathbf{S}} \in [0, \frac{1}{2}]^S\}}}{N_2(\bx_{\star \mathbf{S}} , \theta)} \mathbbm{1}_{\{\epsilon_{n2}(\bx_{\star \mathbf{S}} , \theta)\}}
\end{align*}
$\epsilon_{n1}(\bx_{\star \mathbf{S}} , \theta)$ is the event that $\sum_{i = 1}^n \mathbbm{1}_{\{\bx_{i} \in A_n(\bx_{\star} , \theta)\}} \mathbbm{1}_{\{\bx_{\mathbf{i} \mathbf{S}} \in [0, \frac{1}{2}]^S\}} \mathbbm{1}_{\{\bx_{\star \mathbf{S}} \in [0, \frac{1}{2}]^S\}} > 0$ and $\epsilon_{n2}(\bx_{\star \mathbf{S}} , \theta)$ is the event that $\sum_{i = 1}^n \mathbbm{1}_{\{\bx_{i} \in A_n(\bx_{\star } , \theta)\}} \mathbbm{1}_{\{\bx_{\mathbf{i} \mathbf{S}} \in [1, \frac{3}{2}]^S\}} \mathbbm{1}_{\{\bx_{\star \mathbf{S}} \in [0, \frac{1}{2}]^S\}} > 0$. We additionally specify that $N_1(\bx_{\star \mathbf{S}} , \theta) = \sum_{i = 1}^n (\mathbbm{1}_{\{\bx_{i} \in A_n(\bx_{\star} , \theta)\}} \mathbbm{1}_{\bx_{\mathbf{i} \mathbf{S}} \in [0, \frac{1}{2}]^S\}} \mathbbm{1}_{\bx_{\star \mathbf{S}} \in [1, \frac{3}{2}]^S\}}$, the number of total training points from $\X_1$ that fall into the same partition of the test point $\bx_{\star \mathbf{S}} $ given that $\bx_{\star \mathbf{S}} \in [1, \frac{3}{2}]^S$. A similar definition follows for $N_2(\bx_{\star \mathbf{S}} , \theta)$ with $\X_2$ and test points falling in the interval $[1, \frac{3}{2}]^S$.

We can thus represent the predictions of the overall \textit{Ensemble} as 
\begin{align*}
    \hat{Y}_E(\bx_{\star} ;\theta, \mathcal{D}_n) &= \sum_{i = 1}^n Y_i\mathbbm{E}_{\theta}\left[ W_{i1}(\bx_{\star}, \theta)\right] + Y_i\mathbbm{E}_{\theta}\left[ W_{i2}(\bx_{\star}, \theta)\right]\\
    &=  \sum_{i = 1}^n Y_i \mathbbm{E}_{\theta}\left[ W_{i}(\bx_{\star},\theta) \right]
\end{align*}

where $W_{i}(\bx_{\star},\theta) = W_{i1}(\bx_{\star}, \theta) + W_{i2}(\bx_{\star}, \theta)$. 

\item [(ii)]
The predictions of the \textit{Merged} learner on new point $\bx_{\star}$ can be expressed as
\begin{align*}
    \hat{Y}_M(\bx_{\star}; \theta, \mathcal{D}_n) &= \sum_{i = 1}^n Y_i\mathbbm{E}_{\theta}\left[ H_{i}(\bx_{\star \mathbf{S}} , \theta)\right] \\
\end{align*}
where\begin{align*}
    H_{i}(X, \theta) &= \frac{\mathbbm{1}_{\{\bx_{i} \in A_n(\bx_{\star} , \theta) \}}}{N_n(\bx_{\star \mathbf{S}} , \theta)} \mathbbm{1}_{\left\{\epsilon_n(\bx_{\star \mathbf{S}} , \theta) \right\}}
\end{align*}
and $N_n(\bx_{\star \mathbf{S}} , \theta) = \sum_{i = 1}^n \mathbbm{1}_{\{\bx_{\mathbf{i}} \in A_n(\bx_{\star} , \theta) \}}$, the number of total training samples falling into the same box as $\bx_{\star \mathbf{S}} $. Finally, $\epsilon_n(\bx_{\star \mathbf{S}} , \theta)$ is the event that $N_n(\bx_{\star \mathbf{S}} , \theta) > 0$.
\end{enumerate}

\subsection{Proof of Theorem 3}
In this section, we provide a a derivation for the upper bounds for the squared bias of the \textit{Ensemble} and \textit{Merged}.  
In order to compare the theoretical biases of individual random forest based SCLs and Merged random forest based on the whole data, we build upon the work in [\cite{klusowski2020}, version 6]. In particular, one can generally show that the leading term of the squared bias $\E[\E[\hat{Y}(\bx_{\star})|\bx_{\star}]^2 - f(\bx_{\star})]^2$ for predictor $\hat{Y}(\bx_{\star})$ is equal to
\begin{align}
    &n(n-1)\mathbbm{E}_{\bx_{\star}, \mathcal{D}_n, \beta}\left[ \E_{\theta}\left[ W_{1} \right] (\hat{Y}_1(\bx_{\star}) - f(\bx_{\star}))\mathbbm{E}_{\theta}\left[ W_{2} \right] (\hat{Y}_2(\bx_{\star}) - f(\bx_{\star}))\right]  
\end{align}
for specified weights $W_1$ and $W_2$ unique to the approach used to build the corresponding predictor. The functional forms of these weights for both the \textit{Merged} and \textit{Ensemble} are delineated above in section \ref{sec:expressions}. 
We additionally note that the $j^{th}$ coordinate of $\bx_{\star}$, denoted as $\bx_{\star}^{(j)}$ is either distributed as $\mathrm{U}\left(0, \frac{1}{2}\right)$ or $\mathrm{U}(1, \frac{3}{2})$; therefore, for a particular value of A, it has a binary expansion
\begin{align*}
   \bx_{\star}^{(j)} &= \sum_{k \geq 1} B_{kj} 2^{-k -1} + \frac{\rm A(A + 1)}{2} 
\end{align*}
where $\{ B_{kj}\}_{k = 1}^{\infty}$ are i.i.d $\text{Bernoulli}\left(\frac{1}{2}\right)$.
Throughout, we also use the following result regarding the leaf node containing test point $\bx_{\star}$.  
\begin{remark}
\label{rem: unif_leafsize}
Let $a_{nj}(\bx_{\star}, \theta)$ and $b_{nj}(\bx_{\star}, \theta)$ be the left and right endpoints of $A_{nj}(\bx_{\star}, \theta)$, the j$^{th}$ side of the box containing $\bx_{\star}$. For ease of notation, we will henceforth refer to $a_{nj}(\bx_{\star}, \theta)$ as $a_{nj}$ and $b_{nj}(\bx_{\star}, \theta)$ as $b_{nj}$. $K_{nj}(\bx_{\star}, \theta)$ represents the number of times that the j$^{th}$ coordinate is split, with the total number of splits across all coordinates set to equal $\log_2{k_n}$ for some constant $k_n > 2$. For ease of notation, we will henceforth suppress the dependencies of $a_{nj}, b_{nj}$, and $K_{nj}$ on $(\bx_{\star}, \theta)$. Recall that A = 0 if $\bx_{\star}$ falls within the interval $[0, \frac{1}{2}]^p$ and A = 1 if $\bx_{\star } \in [1, \frac{3}{2}]^p$.
We then observe that each endpoint of $A_{nj}(\bx_{\star}, \theta)$ is a randomly stopped binary expansion of $\bx_{\star}^{(j)}$: 
\begin{align*}
    a_{nj} &\overset{D}{=} \sum_{k = 1}^{K_{nj}} B_{kj} 2^{-k-1}  + \frac{\rm A(A + 1)}{2} \\
    b_{nj} &\overset{D}{=} \sum_{k = 1}^{K_{nj}} B_{kj} 2^{-k -1} + 2^{-{ K_{nj} - 1}} + \frac{\rm A(A + 1)}{2}
\end{align*}
 No matter the value of A, the length of the j$^{th}$ side of the box is given by:
\begin{align*}
   {\lambda(A_{nj}) = b_{nj} - a_{nj} = 2^{-K_{nj} - 1}}
\end{align*}
Therefore, the measure of the box $A_n$ is equal to
\begin{align*}
    \lambda(A_n) &= \prod_{j \in [d]} \lambda(A_{nj}) \\
    &= \prod_{j \in [d]} 2^{-K_{nj} - 1} \\
    &= 2^{-\lceil \log_2k_n \rceil - 1}
\end{align*}
since by construction, $\sum_j K_{nj} = \lceil \log_2k_n \rceil$.
\end{remark}

In order to compare the theoretical biases of individual random forest based SCL based on cluster $t$ and Merged random forest based on the whole data, we build upon the work in [\cite{klusowski2020}, version 6]. In particular, following the arguments [\cite{klusowski2020}, version 6], one can show that the leading term of the squared bias $\E[\E[\hat{Y}_E(\bx_{\star})|\bx_{\star}]^2 - f(\bx_{\star})]^2$ is equal to
\begin{align}
    &n(n-1)\mathbbm{E}_{\bx_{\star}, \mathcal{D}_n, \beta}\left[ \E_{\theta}\left[ W_{1} \right] (\hat{Y}_1(\bx_{\star}) - f(\bx_{\star}))\mathbbm{E}_{\theta}\left[ W_{2} \right] (\hat{Y}_2(\bx_{\star}) - f(\bx_{\star}))\right]  
\end{align}
for specified weights $W_1$ and $W_2$ unique to the approach used to build the corresponding predictor. The functional forms of these weights for both the \textit{Merged} and \textit{Ensemble} are delineated in Appendix section \ref{sec:expressions}. 
The notation $\mathbbm{1}_{\{\mathrm{U}\left(0, \frac{1}{2}\right) \in A_{nj}\}}$ will indicate the event that a random variable distributed as a $\mathrm{U}\left(0, \frac{1}{2}\right)$ falls within the interval $A_{nj}$. 
Given this result, we proceed 
\begin{proof}
\begin{enumerate}
\item[(i)]
The leading term of the squared bias for the \textit{Ensemble} can be represented as
\begin{align*}
n(n-1)\mathbbm{E}_{\bx_{\star}, \mathcal{D}_n, \beta}\left[ \mathbbm{E}_{\theta}\left[ W_{1}\right] (f(\bx_1) - f(\bx_{\star}))\mathbbm{E}_{\theta}\left[ W_{2} \right] (f(\bx_2) - f(\bx_{\star}))\right] 
\end{align*}
where $W_{i} = W_{i1}(\bx_{\star}, \theta) + W_{i2}(\bx_{\star}, \theta)$ for $i = 1, 2$,  as defined in Section \ref{sec:expressions} (i). 
We first take expectations w.r.t $\beta$ of the expression inside the outer expectation. We use the fact that for a quadratic form, 
\begin{align*}
    E[\epsilon^T \Lambda \epsilon] = \text{Trace}(\Lambda \Sigma) + \mu^T \Lambda \mu
\end{align*}
where $\mu = E[\epsilon]$ and $\Sigma = Var[\epsilon]$
\begin{align*}
    &\mathbbm{E}_{\beta}\left[ \mathbbm{E}_{\theta}\left[ W_{1} \right] (f(\bx_{\mathbf{1} \mathbf{S}}) - f(\bx_{\star \mathbf{S}} ))\mathbbm{E}_{\theta}\left[ W_{2} \right] (f(\bx_{\mathbf{2} \mathbf{S}}) - f(\bx_{\star \mathbf{S}} ))\right] \\
    = &\mathbbm{E}_{\theta}\left[ W_{1} \right] \mathbbm{E}_{\theta}\left[ W_{2} \right] \mathbbm{E}_{\beta}\left[\beta^T(\bx_{\mathbf{1} \mathbf{S}} - \bx_{\star \mathbf{S}} )(\bx_{\mathbf{2} \mathbf{S}}-\bx_{\star \mathbf{S}} )^T\beta \right] \\
    = &\mathbbm{E}_{\theta}\left[ W_{1} \right] \mathbbm{E}_{\theta}\left[ W_{2} \right] \text{Trace}[(\bx_{\mathbf{1} \mathbf{S}} - \bx_{\star \mathbf{S}} ) (\bx_{\mathbf{2} \mathbf{S}} - \bx_{\star \mathbf{S}} )^T]
\end{align*}

Therefore, 
\begin{align*}
    &\mathbbm{E}_{\bx_{\star} , \mathcal{D}_n, \beta}\left[ \mathbbm{E}_{\theta}\left[ W_{1} \right] (f(\bx_{\mathbf{1} \mathbf{S}}) - f(\bx_{\star \mathbf{S}} ))\mathbbm{E}_{\theta}\left[W_{2} \right] (f(\bx_{\mathbf{2} \mathbf{S}}) - f(\bx_{\star \mathbf{S}} ))\right] \\
    &= \mathbbm{E}_{\bx_{\star}, \mathcal{D}_n}\left[ \mathbbm{E}_{\theta}\left[ W_{1} \right] \mathbbm{E}_{\theta}\left[ W_{2} \right] \text{Trace}[(\bx_{\mathbf{1} \mathbf{S}} - \bx_{\star \mathbf{S}} ) (\bx_{\mathbf{2} \mathbf{S}} - \bx_{\star \mathbf{S}} )^T] \right] \\
    &= \mathbbm{E}_{\bx_{\star}, \mathcal{D}_n, \theta, \theta^{'}}\left[ W_{1} W_{2}^{'} \text{Trace}[(\bx_{\mathbf{1} \mathbf{S}} - \bx_{\star \mathbf{S}} ) (\bx_{\mathbf{2} \mathbf{S}} - \bx_{\star \mathbf{S}} )^T] \right]
\end{align*}
where $\theta^{'}$ is an independent copy of $\theta$. Next, we calculate the product $W_{i1} W_{i2}^{'}$:
\begin{align*}
    W_{1} W_{2}^{'} &= \mathbbm{1}_{\{\bx_{\mathbf{1}} \in A_n\}} \mathbbm{1}_{\{\bx_{\mathbf{2}} \in A_n^{'}\}} \\ 
    &\times \left[\frac{\mathbbm{1}_{\{\bx_{\mathbf{1} \mathbf{S}} \in [0, \frac{1}{2}]^S\}} \mathbbm{1}_{\{\bx_{\star \mathbf{S}} \in [0, \frac{1}{2}]^S\}}}{N_1} \mathbbm{1}_{\{\epsilon_{n1}\}} + \frac{\mathbbm{1}_{\{\bx_{\mathbf{1} \mathbf{S}} \in [1, \frac{3}{2}]^S\}} \mathbbm{1}_{\{\bx_{\star \mathbf{S}} \in [1, \frac{3}{2}]^S\}}}{N_2}
    \mathbbm{1}_{\{\epsilon_{n2}\}} \right] \\
    &\times \left[\frac{\mathbbm{1}_{\{\bx_{\mathbf{2} \mathbf{S}} \in [0, \frac{1}{2}]^S\}} \mathbbm{1}_{\{\bx_{\star \mathbf{S}} \in [0, \frac{1}{2}]^S\}}}{N_1^{'}} \mathbbm{1}_{\{\epsilon_{n1}^{'}\}} + \frac{\mathbbm{1}_{\{\bx_{\mathbf{2} \mathbf{S}} \in [1, \frac{3}{2}]^S\}} \mathbbm{1}_{\{\bx_{\star \mathbf{S}} \in [1, \frac{3}{2}]^S\}}}{N_2^{'}}
    \mathbbm{1}_{\{\epsilon_{n2}^{'}\}} \right] \\
    &= \mathbbm{1}_{\{\bx_{\mathbf{1}} \in A_n\}} \mathbbm{1}_{\{\bx_{\mathbf{2}} \in A_n^{'}\}} \\ 
    &\times\Bigg[\frac{\mathbbm{1}_{\{\bx_{\mathbf{1} \mathbf{S}} \in [0, \frac{1}{2}]^S\}}\mathbbm{1}_{\{\bx_{\mathbf{2} \mathbf{S}} \in [0, \frac{1}{2}]^S\}}[\mathbbm{1}_{\{\bx_{\star \mathbf{S}} \in [0, \frac{1}{2}]^S\}}]^2 \mathbbm{1}_{\{\epsilon_{n1}\}} \mathbbm{1}_{\{\epsilon_{n1}^{'}\}}}{N_1 N_1^{'}}  \\
    &+ \frac{\mathbbm{1}_{\{\bx_{\mathbf{1} \mathbf{S}} \in [1, \frac{3}{2}]^S\}}\mathbbm{1}_{\{\bx_{\mathbf{2} \mathbf{S}} \in [1, \frac{3}{2}]^S\}}[\mathbbm{1}_{\{\bx_{\star \mathbf{S}} \in [1, \frac{3}{2}]^S\}}]^2 \mathbbm{1}_{\{\epsilon_{n2}\}} \mathbbm{1}_{\{\epsilon_{n2}^{'}\}}}{N_2 N_2^{'}} \Bigg]
\end{align*}
The cross terms of the product are equal to 0, since $\mathbbm{1}_{\{\bx_{\star \mathbf{S}} \in [0, \frac{1}{2}]^S\}} \times \mathbbm{1}_{\{\bx_{\star \mathbf{S}} \in [1, \frac{3}{2}]^S\}} = 0$. Define 
\begin{align*}
    T_1 &= \sum_{i \geq 3} \mathbbm{1}_{\{\bx_i  \in A_n\}}\mathbbm{1}_{\{\bx_{\mathbf{i} \mathbf{S}}  \in [0, \frac{1}{2}]^S\}} \mathbbm{1}_{\{\bx_{\star \mathbf{S}} \in [0, \frac{1}{2}]^S\}} \\
    T_2 &= \sum_{i \geq 3} \mathbbm{1}_{\{\bx_i  \in A_n\}}\mathbbm{1}_{\{\bx_{\mathbf{i} \mathbf{S}}  \in [1, \frac{3}{2}]^S\}} \mathbbm{1}_{\{\bx_{\star \mathbf{S}} \in [1, \frac{3}{2}]^S\}}
\end{align*}
and let $T_1^{'}$ and $T_2^{'}$ be the equivalent expressions based on $\theta^{'}$. We then note that 
\begin{align*}
    W_{1} W_{2}^{'} &\leq \mathbbm{1}_{\{\bx_{\mathbf{1}} \in A_n\}} \mathbbm{1}_{\{\bx_{\mathbf{2}} \in A_n^{'}\}} \\ 
    &\times\Bigg[\frac{\mathbbm{1}_{\{\bx_{\mathbf{1} \mathbf{S}} \in [0, \frac{1}{2}]^S\}}\mathbbm{1}_{\{\bx_{\mathbf{2} \mathbf{S}} \in [0, \frac{1}{2}]^S\}}[\mathbbm{1}_{\{\bx_{\star \mathbf{S}} \in [0, \frac{1}{2}]^S\}}]^2}{(1+T_1)(1+T_1^{'})}  \\
    &+ \frac{\mathbbm{1}_{\{\bx_{\mathbf{1} \mathbf{S}} \in [1, \frac{3}{2}]^S\}}\mathbbm{1}_{\{\bx_{\mathbf{2} \mathbf{S}} \in [1, \frac{3}{2}]^S\}}[\mathbbm{1}_{\{\bx_{\star \mathbf{S}} \in [1, \frac{3}{2}]^S\}}]^2 }{(1+T_2)(1+T_2^{'})} \Bigg]
\end{align*}

By the Cauchy-Schwarz inequality,
\begin{align*}
    \mathbbm{E}\left[\frac{1}{(1 + T_1)(1 + T_1^{'})} \bigg|\bx_{\star \mathbf{S}} , \theta, \theta^{'} \right] &\leq \sqrt{\mathbbm{E}\left[\left(\frac{1}{1 + T_1}\right)^2  \bigg|\bx_{\star \mathbf{S}} , \theta, \theta^{'}\right]} \sqrt{\mathbbm{E}\left[\left(\frac{1}{1 + T_1^{'}}\right)^2  \bigg|\bx_{\star \mathbf{S}} , \theta, \theta^{'}\right]} \\
    &\leq \left(\frac{2^{\lceil \log_2k_n \rceil + 2 }}{\sqrt{n(n-1)}}\right)^2 \\
    &= \frac{4^{\lceil \log_2k_n \rceil + 2}}{n(n-1)}
\end{align*}
since if a variable $Z \sim \text{Binomial}(m, p)$, $\mathbbm{E}\left[\left(\frac{1}{1+Z} \right)^2\right] \leq \frac{1}{(m + 1)(m + 2) p^2}$.
 The binomial probability $p$ in this case is equal to
\begin{align*}
    &\mathbbm{E}\left[\mathbbm{1}_{\{\bx_i  \in A_n\}}\mathbbm{1}_{\{\bx_{\mathbf{i} \mathbf{S}}  \in [0, \frac{1}{2}]^S\}} \mathbbm{1}_{\{\bx_{\star \mathbf{S}} \in [0, \frac{1}{2}]^S\}} \right] \\
    &= \mathbbm{E}\left[\mathbbm{1}_{\{\bx_i  \in A_n\}} \bigg| \mathbbm{1}_{\{\bx_{\mathbf{i} \mathbf{S}}  \in [0, \frac{1}{2}]^S\}} \mathbbm{1}_{\{\bx_{\star \mathbf{S}} \in [0, \frac{1}{2}]^S\}} \right] \times \mathbbm{E}\left[\mathbbm{1}_{\{\bx_{\mathbf{i} \mathbf{S}}  \in [0, \frac{1}{2}]^S\}} \right] \times \mathbbm{E}\left[\mathbbm{1}_{\{\bx_{\star \mathbf{S}} \in [0, \frac{1}{2}]^S\}} \right] \\
    &= \frac{1}{4} \mathbbm{E}\left[\mathbbm{1}_{\{\bx_i  \in A_n\}} \bigg| \mathbbm{1}_{\{\bx_{\mathbf{i} \mathbf{S}}  \in [0, \frac{1}{2}]^S\}} \mathbbm{1}_{\{\bx_{\star \mathbf{S}} \in [0, \frac{1}{2}]^S\}} \right] \\
    &= \frac{1}{4} \prod_{j = 1}^S 2(b_{nj} - a_{nj}) \\
    &= \frac{1}{4} \prod_{j = 1}^S 2 \times 2^{-K_{nj} - 1} \\
    &= 2^{-\lceil \log_2k_n \rceil - 2}
\end{align*}

It is simple to see that by a symmetric argument,  $\mathbbm{E}\left[\frac{1}{(1 + T_2)(1 + T_2^{'})} \bigg| \bx_{\star \mathbf{S}} , \theta, \theta^{'} \right]$ has the same upper bound.
Then,
\begin{alignat*}{2}
    &\mathbbm{E}_{\bx_{\star}, \mathcal{D}_n, \theta, \theta^{'}}\left[ W_{1} W_{2}^{'} \text{Trace}[(\bx_{\mathbf{1} \mathbf{S}} - \bx_{\star \mathbf{S}} ) (\bx_{\mathbf{2} \mathbf{S}} - \bx_{\star \mathbf{S}} )^T] \right] \\
    &= 2 \mathbbm{E}_{\bx_{\star} , \mathcal{D}_n}\Bigg[ \mathbbm{E}\left[\frac{1}{(1 + T_1)(1 + T_1^{'})} | \bx_{\star \mathbf{S}} , \theta, \theta^{'} \right] \\
    &\times \mathbbm{E}_{\bx_1, \bx_{\mathbf{2} }}\bigg[\mathbbm{1}_{\{\bx_{\mathbf{1}} \in A_n\}} \mathbbm{1}_{\{\bx_{\mathbf{2}} \in A_n^{'}\}}\mathbbm{1}_{\{\bx_{\mathbf{1} \mathbf{S}} \in [0, \frac{1}{2}]^S\}} \mathbbm{1}_{\{\bx_{\mathbf{2} \mathbf{S}} \in [0, \frac{1}{2}]^S\}} \left[\mathbbm{1}_{\{\bx_{\star \mathbf{S}} \in [0, \frac{1}{2}]^S\}}\right]^2 \text{Trace}\left[(\bx_{\mathbf{1} \mathbf{S}} - \bx_{\star \mathbf{S}} ) (\bx_{\mathbf{2} \mathbf{S}} - \bx_{\star \mathbf{S}} )^T\right] \bigg] \Bigg]\\
    &= \frac{4^{\lceil \log_2k_n \rceil + 5/2}}{n(n-1)} \\
    &\times \mathbbm{E}\Bigg[ \mathbbm{E}\bigg[ \mathbbm{1}_{\{\bx_{\mathbf{1}} \in A_n\}} \mathbbm{1}_{\{\bx_{\mathbf{2}} \in A_n^{'}\}}\mathbbm{1}_{\{\bx_{\mathbf{1} \mathbf{S}} \in [0, \frac{1}{2}]^S\}} \mathbbm{1}_{\{\bx_{\mathbf{2} \mathbf{S}} \in [0, \frac{1}{2}]^S\}} \left[\mathbbm{1}_{\{\bx_{\star \mathbf{S}} \in [0, \frac{1}{2}]^S\}}\right]^2 \text{Trace}\left[(\bx_{\mathbf{1} \mathbf{S}} - \bx_{\star \mathbf{S}} ) (\bx_{\mathbf{2} \mathbf{S}} - \bx_{\star \mathbf{S}} )^T\right] |\bx_{\star \mathbf{S}}  \bigg] \Bigg]\\
\end{alignat*}

 We now look the inner expectation:
\begin{align*}
    &\mathbbm{E}\bigg[ \mathbbm{1}_{\{\bx_{\mathbf{1}} \in A_n\}} \mathbbm{1}_{\{\bx_{\mathbf{2} }\in A_n^{'}\}}\mathbbm{1}_{\{\bx_{\mathbf{1} \mathbf{S}} \in [0, \frac{1}{2}]^S\}} \mathbbm{1}_{\{\bx_{\mathbf{2} \mathbf{S}}\in [0, \frac{1}{2}]^S\}} \left[\mathbbm{1}_{\{\bx_{\star \mathbf{S}} \in [0, \frac{1}{2}]^S\}}\right]^2 \text{Trace}[(\bx_{\mathbf{1} \mathbf{S}} - \bx_{\star \mathbf{S}} ) (\bx_{\mathbf{2} \mathbf{S}} - \bx_{\star \mathbf{S}} )^T] |\bx_{\star \mathbf{S}}  \bigg]  \\
    &= \mathbbm{E}\left[ \mathbbm{1}_{\{\bx_{\mathbf{1}} \in A_n\}} \mathbbm{1}_{\{\bx_{\mathbf{2} }\in A_n^{'}\}}\mathbbm{1}_{\{\bx_{\mathbf{1} \mathbf{S}} \in [0, \frac{1}{2}]^S\}} \mathbbm{1}_{\{\bx_{\mathbf{2} \mathbf{S}}\in [0, \frac{1}{2}]^S\}} \left[\mathbbm{1}_{\{\bx_{\star \mathbf{S}} \in [0, \frac{1}{2}]^S\}}\right]^2 \sum_{j}(\bx_{\mathbf{1} \mathbf{S}}^{(j)} - \bx_{\star \mathbf{S}} ^{(j)}) (\bx_{\mathbf{2} \mathbf{S}}^{(j)} - \bx_{\star \mathbf{S}} ^{(j)}) \big| \bx_{\star \mathbf{S}}  \right] \\
    &=  \sum_{j = 1}^S \mathbbm{E}\left[  \mathbbm{1}_{\{\bx_{\mathbf{1}} \in A_n\}} \mathbbm{1}_{\{\bx_{\mathbf{2} }\in A_n^{'}\}}\mathbbm{1}_{\{\bx_{\mathbf{1} \mathbf{S}} \in [0, \frac{1}{2}]^S\}} \mathbbm{1}_{\{\bx_{\mathbf{2} \mathbf{S}}\in [0, \frac{1}{2}]^S\}} \left[\mathbbm{1}_{\{\bx_{\star \mathbf{S}} \in [0, \frac{1}{2}]^S\}}\right]^2 (\bx_{\mathbf{1} \mathbf{S}}^{(j)} - \bx_{\star \mathbf{S}} ^{(j)}) (\bx_{\mathbf{2} \mathbf{S}}^{(j)} - \bx_{\star \mathbf{S}} ^{(j)})  \big|
    \bx_{\star \mathbf{S}}  \right] \\
    &= \sum_{j = 1}^S \Bigg( \mathbbm{E}\left[ \mathbbm{1}_{\{\bx_{\mathbf{1}} \in A_n \}} \mathbbm{1}_{\{\bx_{\mathbf{1} \mathbf{S}} \in [0, \frac{1}{2}]^S\}} \mathbbm{1}_{\{\bx_{\star \mathbf{S}} \in [0, \frac{1}{2}]^S\}} (\bx_{\mathbf{1} \mathbf{S}}^{(j)} - \bx_{\star \mathbf{S}} ^{(j)}) \big| \bx_{\star \mathbf{S}}  \right] \\
    &\times \mathbbm{E}\left[ \mathbbm{1}_{\{\bx_2\in A_n \}}\mathbbm{1}_{\{\bx_{\mathbf{2} \mathbf{S}}\in [0, \frac{1}{2}]^S\}} \mathbbm{1}_{\{\bx_{\star \mathbf{S}} \in [0, \frac{1}{2}]^S\}} (\bx_{\mathbf{2} \mathbf{S}}^{(j)} - \bx_{\star \mathbf{S}} ^{(j)})\big| \bx_{\star \mathbf{S}}  \right] \Bigg)\\
    &= \sum_{j= 1}^S \left(\mathbbm{E}\left[ \mathbbm{1}_{\{\bx_{\mathbf{1}} \in A_n \}} \mathbbm{1}_{\{\bx_{\mathbf{1} \mathbf{S}} \in [0, \frac{1}{2}]^S\}} \mathbbm{1}_{\{\bx_{\star \mathbf{S}} \in [0, \frac{1}{2}]^S\}} |\bx_{\mathbf{1} \mathbf{S}}^{(j)} - \bx_{\star \mathbf{S}} ^{(j)}| \big| \bx_{\star \mathbf{S}}  \right] \right)^2
\end{align*}
Consider the expression within the square:
\begin{align*}
    &\mathbbm{E}\left[ \mathbbm{1}_{\{\bx_{\mathbf{1}} \in A_n \}} \mathbbm{1}_{\{\bx_{\mathbf{1} \mathbf{S}} \in [0, \frac{1}{2}]^S\}} \mathbbm{1}_{\{\bx_{\star \mathbf{S}} \in [0, \frac{1}{2}]^S\}} |\bx_{\mathbf{1} \mathbf{S}}^{(j)} - \bx_{\star \mathbf{S}} ^{(j)}| \big| \bx_{\star \mathbf{S}}  \right] \\
    &= \mathbbm{E}\left[  |\bx_{\mathbf{1} \mathbf{S}}^{(j)} - \bx_{\star \mathbf{S}} ^{(j)}| \bigg| \mathbbm{1}_{\{\bx_{\mathbf{1} \mathbf{S}}^{(j)} \in A_{nj} \}},\mathbbm{1}_{\{\bx_{\mathbf{1} \mathbf{S}} \in [0, \frac{1}{2}]^S\}},  \mathbbm{1}_{\{\bx_{\star \mathbf{S}} \in [0, \frac{1}{2}]^S\}}, \bx_{\star \mathbf{S}}  \right] \\
    &\times \mathbbm{E}\left[ \mathbbm{1}_{\{\bx_{\mathbf{1}} \in A_n \}} |  \mathbbm{1}_{\{\bx_{\mathbf{1} \mathbf{S}} \in [0, \frac{1}{2}]^S\}},  \mathbbm{1}_{\{\bx_{\star \mathbf{S}} \in [0, \frac{1}{2}]^S\}}, \bx_{\star \mathbf{S}}  \right] \times \mathbbm{E}\left[\mathbbm{1}_{\{\bx_{\mathbf{1} \mathbf{S}} \in [0, \frac{1}{2}]^S\}} \right] \times \mathbbm{E}\left[\mathbbm{1}_{\{\bx_{\star \mathbf{S}} \in [0, \frac{1}{2}]^S\}} \right]\\
    &\leq {\frac{1}{2} \lambda(A_{nj}) \times 2\lambda(A_n)} \times \frac{1}{4} \\
    &= \frac{1}{4} \lambda(A_{nj}) \times \lambda(A_n)
\end{align*}
Note that we calculate an upper bound for $ \mathbbm{E}\left[  |\bx_{\mathbf{1} \mathbf{S}}^{(j)} - \bx_{\star \mathbf{S}} ^{(j)}| \bigg| \mathbbm{1}_{\{\bx_{\mathbf{1} \mathbf{S}}^{(j)} \in A_{nj} \}},\mathbbm{1}_{\{\bx_{\mathbf{1} \mathbf{S}} \in [0, \frac{1}{2}]^S\}},  \mathbbm{1}_{\{\bx_{\star \mathbf{S}} \in [0, \frac{1}{2}]^S\}}, \bx_{\star \mathbf{S}}  \right]$ by noting that the most extreme value of $\bx_{\star \mathbf{S}} ^{(j)}$ is if it is one of the endpoints of the interval. In that case, the difference $|\bx_{\mathbf{1} \mathbf{S}}^{(j)} - \bx_{\star \mathbf{S}} ^{(j)}|$ can take on any number between 0 and $\lambda(A_{nj})$ with equal probability, since $\bx_{\mathbf{1} \mathbf{S}}^{(j)}| \mathbbm{1}_{\{\bx_{\mathbf{1} \mathbf{S}}^{(j)} \in A_{nj} \}}$ is uniformly distributed. Therefore, the expected value of the difference in this case is $\frac{1}{2}\lambda(A_{nj})$. Any other value of $\bx_{\star \mathbf{S}} ^{(j)}$ within the interval will produce a smaller expected value of the difference. 
Therefore, 
\begin{align*}
    &\sum_j \left(\mathbbm{E}\left[ \mathbbm{1}_{\{\bx_{\mathbf{1}} \in A_n \}} \mathbbm{1}_{\{\bx_{\mathbf{1} \mathbf{S}} \in [0, \frac{1}{2}]^S\}} \mathbbm{1}_{\{\bx_{\star \mathbf{S}} \in [0, \frac{1}{2}]^S\}} \left|\bx_{\mathbf{1} \mathbf{S}}^{(j)} - \bx_{\star \mathbf{S}} ^{(j)}\right| \big| \bx_{\star \mathbf{S}}  \right] \right)^2 \\
    &= \sum_j \left[\frac{1}{4} \lambda(A_{nj}) \times \lambda(A_n)\right]^2\\
    &= 4^{-2}\lambda^2(A_n) \sum_j \lambda^2(A_{nj}) \\
    &= 4^{-\lceil \log_2k_n \rceil - 4} \sum_j \left[2^{-K_{nj}}\right]^2 \\
\end{align*}

We now take it's expectation w.r.t. $\theta$, yielding
\begin{align*}
   \mathbbm{E}\left[ 4^{-\lceil \log_2k_n \rceil - 4} \sum_j [2^{-K_{nj}}]^2  \right] &= 4^{-\lceil \log_2k_n \rceil - 4} \sum_j \mathbbm{E}\left[[2^{-K_{nj}}]^2  \right]
\end{align*}

Recall that for any $j$, $K_{nj} \sim \text{Binomial}(p_{nj}, \lceil \log_2k_n \rceil)$. We do a transformation to obtain the distribution of $y = \left[2^{-K_{nj}}\right]^2$. Note that $K_{nj} = -\log_2y$. Therefore, 
\begin{align*}
    f_Y(y) &= f_{k_{nj}}(-\log_2y) \\
    &= \binom{\lceil \log_2k_n \rceil}{-\log_2y} p_{nj}^{-\log_2y}(1-p_{nj})^{\lceil \log_2k_n \rceil + \log_2y}
\end{align*}
and 
\begin{align*}
    E[Y^2] &= \sum_{y \in \{1, \frac{1}{2}, ..., 2^{-\lceil \log_2k_n \rceil} \}} y^2 \binom{\lceil \log_2k_n \rceil}{-\log_2y} p_{nj}^{-\log_2y}(1-p_{nj})^{\lceil \log_2k_n \rceil + \log_2y} \\
    &= \sum_{k = 0}^{\log_2k_n} \binom{\lceil \log_2k_n \rceil}{k} 2^{-2k} (p_{nj})^k (1-p_{nj})^{\lceil \log_2k_n \rceil -k} \\
    &= 2^{-\lceil \log_2k_n \rceil}\sum_{k = 0}^{\log_2k_n} \binom{\lceil \log_2k_n \rceil}{k}  \left(\frac{p_{nj}}{2}\right)^k (2-2p_{nj})^{\lceil \log_2k_n \rceil -k} \\
    &= 2^{-\lceil \log_2k_n \rceil}\left(\frac{p_{nj}}{2} + 2-2p_{nj} \right)^{\lceil \log_2k_n \rceil} \\
    &= \left(1 - \frac{3p_{nj}}{4} \right)^{\lceil \log_2k_n \rceil}\\
    &\leq {k_n^{\log_2\left(1 - 3p_{n}/4 \right)}}
\end{align*}
Empirically, the last bound is tight. 
Now, using this bound, 
\begin{align*}
    4^{-\lceil \log_2k_n \rceil - 4} \sum_j \mathbbm{E}\left[[2^{-K_{nj}}]^2 \big|\bx_{\star \mathbf{S}}  \right] & \leq 4^{-\lceil \log_2k_n \rceil - 4} \sum_j k_n^{log_2(1 - 3p_n/4)} \\
    &= 4^{-\lceil \log_2k_n \rceil - 4} \times \text{S} \times k_n^{log_2(1 - 3p_n/4)}
\end{align*}

Putting it all together, 
\begin{align*}
    &\frac{4^{\lceil \log_2k_n \rceil + 5/2}}{n(n-1)}  \mathbbm{E}\left[\sum_j \left(\mathbbm{E}\left[ \mathbbm{1}_{\{\bx_{\mathbf{1}} \in A_n \}} \mathbbm{1}_{\{\bx_{\mathbf{1} \mathbf{S}} \in [0, \frac{1}{2}]^S\}} \mathbbm{1}_{\{\bx_{\star \mathbf{S}} \in [0, \frac{1}{2}]^S\}} |\bx_{\mathbf{1} \mathbf{S}}^{(j)} - \bx_{\star \mathbf{S}} ^{(j)}| \big|\bx_{\star \mathbf{S}} \right] \right)^2  \right] \\
    &\leq \frac{4^{\lceil \log_2k_n \rceil + 5/2}}{n(n-1)} 4^{-\lceil \log_2k_n \rceil - 4} \text{S} k_n^{log_2(1 - 3p_n/4)} \\
    &= \frac{\text{S} k_n^{log_2(1 - 3p_n/4)}}{8n(n-1)}
\end{align*}

 \item[(ii)]
 The leading term of the squared bias for the \textit{Merged} is equal to
\begin{align*}
n(n-1)E_{\bx_{\star}, \mathcal{D}_n, \beta}\left[ \E_{\theta}\left[ H_{1} \right] (f(\bx_{\mathbf{1} \mathbf{S}}) - f(\bx_{\star \mathbf{S}} ))\E_{\theta}\left[ H_{2} \right] (f(\bx_{\mathbf{2} \mathbf{S}}) - f(\bx_{\star \mathbf{S}} ))\right]
\end{align*}
where $H_1$ and $H_2$ are defined in Section \ref{sec:expressions} (ii). As in the previous section, we begin by calculating
\begin{align*}
    &E_{\bx_1^{(j)}}\left[\mathbbm{1}_{\{\bx_{\mathbf{1} \mathbf{S}}^{(j)} \in A_{nj}\}} \right] \\
    &= E_{\bx_1^{(j)}}\left[\mathbbm{1}_{\{\mathrm{U}\left(0, \frac{1}{2}\right) \in A_{nj}\}} \mathbbm{1}\{i \in \mathbb{S}_1\} + \mathbbm{1}_{\{\mathrm{U}(1, \frac{3}{2}) \in A_{nj}\}}\mathbbm{1}\{i \in \mathbb{S}_2\} \right] \\
    &= E_i\bigg[E_{\bx_1^{(j)}|i}\left[\mathbbm{1}_{\{\mathrm{U}\left(0, \frac{1}{2}\right) \in A_{nj}\}}\bigg|i \in \mathbb{S}_1 \right]\mathbbm{1}\{i \in \mathbb{S}_1\} +
    E_{\bx_1^{(j)}|i}\left[\mathbbm{1}_{\{\mathrm{U}(1, \frac{3}{2}) \in A_{nj}\}}\bigg|i \in \mathbb{S}_2 \right]\mathbbm{1}\{i \in \mathbb{S}_2\}\bigg] \\
    &= E_{\bx_1^{(j)}|i}\left[\mathbbm{1}_{\{\mathrm{U}\left(0, \frac{1}{2}\right) \in A_{nj}\}}\bigg|i \in \mathbb{S}_1 \right] \\
    &= P\left(\mathrm{U}\left(0, \frac{1}{2}\right) \in A_{nj}\right)
\end{align*}
To find this last quantity, note that for test point $ \bx_{\star} \sim [\mathrm{U}\left(0, \frac{1}{2}\right)]^S$ or $[\mathrm{U}\left(1, \frac{3}{2}\right)]^S$, there are two possible scenarios as to the values of $a_{nj}$ and $b_{nj}$:
\begin{enumerate}
    \item $a_{nj} < \frac{1}{2}, b_{nj} \leq \frac{1}{2}$
    \item $a_{nj} \geq 1, b_{nj} > 1$
\end{enumerate}
Note that $P(a_{nj} \geq 1) = P(B_{1j}(\bx_{\star}) = 1) = P(\text{Bernoulli}\left(\frac{1}{2}\right) = 1) = \frac{1}{2}$. Then, taking the second scenario as an example:
\begin{align*}
    P(a_{nj} \geq 1, b_{nj} > 1) &= P(b_{nj} > 1 | a_{nj} \geq 1)P(a_{nj} \geq 1) \\
    &= 1 \times \frac{1}{2} = \frac{1}{2}
\end{align*}
We see then that the probability of either of the scenarios above is equal to $\frac{1}{2}$. We can then calculate that:
\begin{align*}
    P\left(\mathrm{U}\left(0, \frac{1}{2}\right) \in A_{nj}\right) &= P\left(\mathrm{U}\left(0, \frac{1}{2}\right) \in A_{nj}\right|a_{nj} < \frac{1}{2}, b_{nj} \leq \frac{1}{2})P(a_{nj} < \frac{1}{2}, b_{nj} \leq \frac{1}{2}) \\
    &+ P\left(\mathrm{U}\left(0, \frac{1}{2}\right) \in A_{nj}\right|a_{nj} \geq 1, b_{nj} > 1)P(a_{nj} \geq 1, b_{nj} > 1) \\
    &= \frac{1}{2}\left[\frac{b_{nj} - a_{nj}}{\frac{1}{2}} + 0 \right] \\
    &= b_{nj} - a_{nj}
\end{align*}

We now define $U = \sum_{i \geq 3} \mathbbm{1}_{\{\bx_{\mathbf{i}} \in A_n\}} \sim \text{Binomial}(n-2, p)$ where $p = E_{\bx_1} \left[ \mathbbm{1}_{\{\bx_{\mathbf{1} \mathbf{S}} \in A_n \}} \right] = 2^{-\lceil \log_2k_n \rceil - 1}$. Next, note that 

\begin{align*}
    H_{1}H^{'}_{2} \leq \frac{\mathbbm{1}_{\{\bx_{\mathbf{1}} \in A_n\}} \mathbbm{1}_{\{\bx_{\mathbf{2}} \in A_n^{'}\}}}{(1 + U)(1 + U^{'})}
\end{align*}

By the Cauchy-Schwarz inequality,
\begin{align*}
    \mathbbm{E}\left[\frac{1}{(1 + U)(1 + U^{'})} |\bx_{\star}, \theta, \theta^{'} \right] &\leq \sqrt{\mathbbm{E}\left[\left(\frac{1}{1 + U}\right)^2  |\bx_{\star}, \theta, \theta^{'}\right]} \sqrt{\mathbbm{E}\left[\left(\frac{1}{1 + U^{'}}\right)^2  |\bx_{\star}, \theta, \theta^{'}\right]} \\
    &\leq \left(\frac{2^{\lceil \log_2k_n \rceil + 1}}{\sqrt{n(n-1)}}\right)^2 \\
    &= \frac{4^{\lceil \log_2k_n \rceil + 1}}{n(n-1)}
\end{align*}
Following similar arguments to the calculation of the ensemble bound, the leading term can then be written as: 
\begin{align*}
&E_{\bx_{\star}, \mathcal{D}_n}[E_{\theta}\left[ H_{1}(X,\theta) \right] E_{\theta}\left[ H_{2}(X,\theta) \right] \text{Trace}[(\bx_{\mathbf{1} \mathbf{S}} - \bx_{\star \mathbf{S}} ) (\bx_{\mathbf{2} \mathbf{S}} - \bx_{\star \mathbf{S}} ) ^T]]\\
&= E[H_{1} H_{2}^{'} \text{Trace}[(\bx_{\mathbf{1} \mathbf{S}} - \bx_{\star \mathbf{S}} ) (\bx_{\mathbf{2} \mathbf{S}} - \bx_{\star \mathbf{S}} ) ^T] ] \\
&\leq \mathbbm{E}\left[ \mathbbm{E}\left[\frac{1}{(1+U)(1+U^{'})} \bigg|\bx_{\star \mathbf{S}} , \theta, \theta^{'} \right] \mathbbm{E}\left[ \mathbbm{1}_{\{\bx_{\mathbf{1} } \in A_n \}} \mathbbm{1}_{\{\bx_{\mathbf{2}} \in A_n^{'} \}} \text{Trace}[(\bx_{\mathbf{1} \mathbf{S}} - \bx_{\star \mathbf{S}} ) (\bx_{\mathbf{2} \mathbf{S}} - \bx_{\star \mathbf{S}} ) ^T] \right] \right]\\
&\leq \frac{4^{\lceil \log_2k_n \rceil + 1} }{n(n-1)} \mathbbm{E}\left[ \mathbbm{1}_{\{\bx_{\mathbf{1}} \in A_n \}} \mathbbm{1}_{\{\bx_{\mathbf{2}} \in A_n^{'} \}} \text{Trace}[(\bx_{\mathbf{1} \mathbf{S}} - \bx_{\star \mathbf{S}} ) (\bx_{\mathbf{2} \mathbf{S}} - \bx_{\star \mathbf{S}} ) ^T] \right]\\
&= \frac{4^{\lceil \log_2k_n \rceil + 1}}{n(n-1)} \mathbbm{E}\left[\mathbbm{E}\left[ \mathbbm{1}_{\{\bx_{\mathbf{1} } \in A_n \}} \mathbbm{1}_{\{\bx_{\mathbf{2}} \in A_n^{'} \}} \text{Trace}[(\bx_{\mathbf{1} \mathbf{S}} - \bx_{\star \mathbf{S}} ) (\bx_{\mathbf{2} \mathbf{S}} - \bx_{\star \mathbf{S}} ) ^T] \big|\bx_{\star \mathbf{S}}  \right] \right]\\
\end{align*}

We now look the inner expectation:
\begin{align*}
    &\mathbbm{E}\left[ \mathbbm{1}_{\{\bx_{\mathbf{1} } \in A_n \}} \mathbbm{1}_{\{\bx_{\mathbf{2}} \in A_n^{'} \}} \text{Trace}[(\bx_{\mathbf{1} \mathbf{S}} - \bx_{\star \mathbf{S}} ) (\bx_{\mathbf{2} \mathbf{S}} - \bx_{\star \mathbf{S}} ) ^T] \big|\bx_{\star \mathbf{S}}  \right] \\
    &= \mathbbm{E}\left[ \mathbbm{1}_{\{\bx_{\mathbf{1} } \in A_n \}} \mathbbm{1}_{\{\bx_{\mathbf{2}} \in A_n^{'} \}} \sum_{j}(\bx_{\mathbf{1} \mathbf{S}}^{(j)} - \bx_{\star \mathbf{S}} ^{(j)}) (\bx_{\mathbf{2} \mathbf{S}}^{(j)} - \bx_{\star \mathbf{S}} ^{(j)}) \big|\bx_{\star \mathbf{S}}  \right] \\
    &=  \sum_{j = 1}^S \mathbbm{E}\left[ \mathbbm{1}_{\{\bx_{\mathbf{1} } \in A_n \}} \mathbbm{1}_{\{\bx_{\mathbf{2}} \in A_n^{'} \}} (\bx_{\mathbf{1} \mathbf{S}}^{(j)} - \bx_{\star \mathbf{S}} ^{(j)}) (\bx_{\mathbf{2} \mathbf{S}}^{(j)} - \bx_{\star \mathbf{S}} ^{(j)})  \big|\bx_{\star \mathbf{S}} \right] \\
    &= \sum_j \mathbbm{E}\left[ \mathbbm{1}_{\{\bx_{\mathbf{1}} \in A_n \}} (\bx_{\mathbf{1} \mathbf{S}}^{(j)} - \bx_{\star \mathbf{S}} ^{(j)}) \big|\bx_{\star \mathbf{S}} \right] \times \mathbbm{E}\left[ \mathbbm{1}_{\{\bx_{\mathbf{2} } \in A_n \}} (\bx_{\mathbf{2} \mathbf{S}}^{(j)} - \bx_{\star \mathbf{S}} ^{(j)})\big|\bx_{\star \mathbf{S}}  \right] \\
    &= \sum_j \left(\mathbbm{E}\left[ \mathbbm{1}_{\{\bx_{\mathbf{1}} \in A_n \}} \left|\bx_{\mathbf{1} \mathbf{S}}^{(j)} - \bx_{\star \mathbf{S}} ^{(j)}\right|\big|\bx_{\star \mathbf{S}} \right] \right)^2
\end{align*}
where 
\begin{align*}
    \mathbbm{E}\left[ \mathbbm{1}_{\{\bx_{\mathbf{1} } \in A_n \}} \left|\bx_{\mathbf{1} \mathbf{S}}^{(j)} - \bx_{\star \mathbf{S}} ^{(j)}\right| \bigg|\bx_{\star \mathbf{S}}  \right] &= \mathbbm{E}\left[  \left|\bx_{\mathbf{1} \mathbf{S}}^{(j)} - \bx_{\star \mathbf{S}} ^{(j)}\right| \bigg| \mathbbm{1}_{\{\bx_{\mathbf{1} \mathbf{S}}^{(j)} \in A_{nj} \}}, X \right] \mathbbm{E}\left[ \mathbbm{1}_{\{\bx_{\mathbf{1} \mathbf{S}} \in A_n \}} |\bx_{\star \mathbf{S}}  \right] \\
    &= \frac{1}{2} \lambda(A_{nj}) \times 2\lambda(A_n) \\
    &= \lambda(A_{nj}) \times \lambda(A_n)
\end{align*}
We calculate the quantity $\mathbbm{E}\left[ \mathbbm{1}_{\{\bx_{\mathbf{1} } \in A_n \}} |\bx_{\star \mathbf{S}}  \right]$ by noting that 
\begin{align*}
    \mathbbm{E}\left[ \mathbbm{1}_{\{\bx_{\mathbf{1} } \in A_n \}} |\bx_{\star \mathbf{S}}  \right] &= \mathbbm{E}\left[ \mathbbm{1}_{\{\bx_{\mathbf{1} } \in A_n \}} |\bx_{\star \mathbf{S}} \in \left[0, \frac{1}{2}\right]^S \right] + \mathbbm{E}\left[ \mathbbm{1}_{\{\bx_{\mathbf{1}} \in A_n \}} |\bx_{\star \mathbf{S}} \in \left[ 1,\frac{3}{2}\right]^S \right] \\
    &= \frac{\lambda(A_n)}{\frac{1}{2}}
\end{align*}
Therefore, 
\begin{align*}
    \sum_j \left(\mathbbm{E}\left[ \mathbbm{1}_{\{\bx_{\mathbf{1} } \in A_n \}} \left|\bx_{\mathbf{1} \mathbf{S}}^{(j)} - \bx_{\star \mathbf{S}} ^{(j)}\right| \bigg|\bx_{\star \mathbf{S}} \right] \right)^2 &= \sum_j \left[\lambda(A_{nj}) \times \lambda(A_n)\right]^2\\
    &= \lambda^2(A_n) \sum_j \lambda^2(A_{nj}) \\
    &= 4^{-\lceil \log_2k_n \rceil - 2} \sum_j \left[2^{-K_{nj}}\right]^2 \\
\end{align*}

Finally, 
\begin{align*}
    &\frac{4^{\lceil \log_2k_n \rceil + 1}}{n(n-1)} \mathbbm{E}\left[ \sum_j \left(\mathbbm{E}\left[ \mathbbm{1}_{\{\bx_{\mathbf{1}} \in A_n \}} \left|\bx_{\mathbf{1} \mathbf{S}}^{(j)} - \bx_{\star \mathbf{S}} ^{(j)}\right| \bigg|\bx_{\star \mathbf{S}}  \right] \right)^2 \right]   \\
    &\leq \frac{4^{\lceil \log_2k_n \rceil + 1}}{n(n-1)} 4^{-\lceil \log_2k_n \rceil - 2} \text{S} k_n^{log_2(1 - 3p_n/4)} \\
    &= \frac{\text{S} k_n^{log_2(1 - 3p_n/4)}}{4n(n-1)}
\end{align*}

\end{enumerate}
\end{proof}

\subsection{Training sets with multiple uniformly-distributed clusters}

We finally generalize the results from Theorem 3 to training sets with $K \geq 2$ such that $K = 2^r$ for some $r \geq 1$. We continue to specify that all clusters be uniformly distributed with equal width such that all clusters represent a mean shift from one another. That is, $\X_t \stackrel{\rm i.i.d.}{\sim} \rm Unif(a_{t}, b_{t})$ with $b_t - a_t = \omega$ for all $t = 1, \ldots K$ and some constant $\omega > 0$. The test point can now be written as  $\bx_{\star \mathbf{S}}  \sim \sum_{t = 1}^K \rm Unif(a_{t}, b_{t}) \mathbbm{1}_{\{B = t \}}$ where $B \sim \rm  \text{discrete unif}\{1, K\}$ is a discrete uniform variable encoding the test point's cluster membership. 
\par We show in the following theorem that given these conditions, the upper bound for the \textit{Merged} is always larger than that of the \textit{Ensemble}, regardless of the width of each distribution $\omega$ or the number of clusters within the training set. For simplicity and comparability, we again assume that each learner predicts only on test points arising from the same distribution as in training, even though clusters may have overlapping ranges. 

\begin{theorem}\label{theorem: gen_moreclust}
Let $K = 2^{r}$ for some $r > 1$; assume that $\X_t \stackrel{\rm i.i.d.}{\sim} \rm Unif(a_{t}, b_{t})$ with $b_t - a_t = \omega$ for all $t = 1, \ldots K$. The \textit{Ensemble} is comprised of learners in which each SCL predicts only on test points arising from the same distribution as its training cluster. Then, 
\begin{enumerate}
    \item [(i)] The upper bound for the squared bias of the \textit{Ensemble} is
    \begin{align*}
    \frac{K}{4} \omega^2 S k_n^{log_2(1 - 3p_n/4)}
\end{align*}
    \item [(ii)] The upper bound for the squared bias of the \textit{Merged} is
    \begin{align*}
        4^{\log_2{k} - 1} \omega^2 S k_n^{log_2(1 - 3p_n/4)}
    \end{align*}
\end{enumerate}
\end{theorem}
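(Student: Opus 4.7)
The plan is to extend the proof architecture of Theorem 3 from two clusters to $K = 2^r$ clusters, carefully tracking the $K$- and $\omega$-dependent scaling factors. First I would generalize the predictor expressions in Appendix section \ref{sec:expressions}: for the \emph{Ensemble}, the weight $W_i = \sum_{t=1}^K W_{it}$ now contains $K$ summands, each $W_{it}$ involving the indicator that training point $\bx_i$ lies in cluster $t$ together with the indicator that $\bx_{\star}$ is drawn from that same cluster (i.e.\ $\mathbbm{1}\{B = t\}$, with $B$ the latent cluster index of $\bx_{\star}$). For the \emph{Merged}, the weight $H_i$ retains its original form, but the box $A_n$ is now a sub-box of the particular cluster containing $\bx_{\star}$, whose relative volume inside the full support changes with $K$ and $\omega$.

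Next I would apply the $\bbeta$-expectation trace identity as in Theorem 3 to reduce the leading squared-bias term to an expectation of $W_1 W_2^{'} \,\mathrm{Trace}[(\bx_{\mathbf{1}\mathbf{S}} - \bx_{\star \mathbf{S}})(\bx_{\mathbf{2}\mathbf{S}} - \bx_{\star \mathbf{S}})^T]$. For the \emph{Ensemble}, expanding $W_1 W_2^{'} = \bigl(\sum_t W_{1t}\bigr)\bigl(\sum_s W_{2s}^{'}\bigr)$ produces $K^2$ terms; the cross terms ($t \neq s$) vanish because $\mathbbm{1}\{B = t\}$ and $\mathbbm{1}\{B = s\}$ are mutually exclusive. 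Only the $K$ diagonal terms survive, and each is structurally identical to the single-term computation in Theorem 3(i) but scaled by the cluster-membership probability $1/K$ in place of $1/2$. Combining the Cauchy--Schwarz estimate with the binomial identity $\mathbb{E}[2^{-2K_{nj}}] \leq k_n^{\log_2(1 - 3p_n/4)}$ and summing $K$ identical contributions yields a total of order $K \cdot \omega^2 S\, k_n^{\log_2(1 - 3p_n/4)}$, giving (i).

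For the \emph{Merged}, there is no cluster decomposition; instead, the binomial parameter governing $U = \sum_{i \geq 3} \mathbbm{1}\{\bx_i \in A_n\}$ becomes $p = 1/(K k_n)$ rather than $1/(2 k_n)$, because a random training point lies in the cluster of $\bx_{\star}$ with probability $1/K$ and then falls inside $A_n$ with conditional probability $1/k_n$. Cauchy--Schwarz then yields the prefactor $K^2 k_n^2/(n(n-1)) = 4^{\lceil \log_2 k_n \rceil + \log_2 K}/(n(n-1))$, which after combining with an inner trace expectation of order $\omega^2 S\, k_n^{\log_2(1-3p_n/4)} \cdot 4^{-\lceil \log_2 k_n \rceil}$ produces $4^{\log_2 K - 1}\, \omega^2 S\, k_n^{\log_2(1-3p_n/4)}$, giving (ii).

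The main obstacle will be the careful bookkeeping of the $K$- and $\omega$-dependent prefactors: the \emph{Ensemble} accumulates a linear factor $K$ from $K$ surviving diagonal contributions, while the \emph{Merged} picks up a quadratic factor $K^2$ from the squared reciprocal binomial probability in Cauchy--Schwarz, which is precisely the source of the gap $K/4$ versus $K^2/4 = 4^{\log_2 K - 1}$ between the two upper bounds. A secondary, essentially routine, verification is that the expected-distance term $\mathbb{E}[|\bx_{\mathbf{1}\mathbf{S}}^{(j)} - \bx_{\star \mathbf{S}}^{(j)}|\, \mathbbm{1}\{\bx_1 \in A_n\} \mid \bx_{\star}]$ scales correctly in $\omega$: since each cluster has width $\omega$, each factor of $\lambda(A_{nj})$ carries a factor $\omega$, producing the $\omega^2$ appearing in both final bounds.
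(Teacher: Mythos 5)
Your treatment of part (i) matches the paper's: expand $W_1W_2'$ into $K^2$ terms, observe that the cross terms vanish because the cluster-membership indicators of $\bx_{\star}$ are mutually exclusive, and note that each of the $K$ surviving diagonal terms contributes an identical, $K$-independent amount (the $1/K^2$ in the binomial parameter entering Cauchy--Schwarz as $K^4$ is exactly cancelled by the $(1/K^2)^2$ carried by the squared inner distance expectation), so the total scales linearly in $K$. That is precisely the paper's mechanism for the \emph{Ensemble} bound.

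Part (ii) has a genuine gap. You attribute the $K^2$ growth of the \emph{Merged} bound solely to the Cauchy--Schwarz factor $1/p^2$ with $p = 1/(Kk_n)$, while asserting that the inner trace expectation is of order $\omega^2 S\,k_n^{\log_2(1-3p_n/4)}\cdot 4^{-\lceil\log_2 k_n\rceil}$, independent of $K$. But the same event whose probability you deflate by $1/K$ for the binomial parameter --- that a generic training point lands in the cluster of $\bx_{\star}$ and then in $A_n$ --- is exactly the factor $\mathbb{E}\bigl[\mathbbm{1}_{\{\bx_{\mathbf{1}}\in A_n\}}\mid \bx_{\star}\bigr]$ appearing inside the inner expectation, where it is squared. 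Tracked consistently, the $1/K^2$ there cancels the $K^2$ from Cauchy--Schwarz and your route delivers a $K$-independent merged bound, not $4^{\log_2 K - 1}\omega^2 S\,k_n^{\log_2(1-3p_n/4)}$. (Your own arithmetic also lands on $K^2$ rather than $K^2/4$.) The paper's $K^2$ comes from a different place: the \emph{Merged} forest behaves like a single centered forest trained on a $\mathrm{Unif}(a, a+K\omega)$ support, so the effective per-coordinate cell width --- and hence the distance term $|\bx_{\mathbf{1}\mathbf{S}}^{(j)}-\bx_{\star\mathbf{S}}^{(j)}|$ --- carries a factor proportional to the total support width $K\omega$ rather than the cluster width $\omega$; squaring yields the $K^2$. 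To repair your argument you need either to adopt that accounting of the cell geometry, or to place the $1/K$ cluster-membership factor consistently in both the binomial parameter and the conditional box probability and then locate the $K^2$ elsewhere, as the paper's explicit $K=2$ and $K=4$ computations implicitly do.
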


\par It can be easily verified that Theorem 3 arises as special cases of the above result with $K = 2$ and $\omega = \frac{1}{2}$. We observe that as the number of clusters $K$ increases, so does the separation between the \textit{Ensemble} and \textit{Merged} bounds. In fact, the \textit{Merged} bound is $\frac{4^{\log_2{K}}}{K}$ times that of the \textit{Ensemble} bound, indicating an exponential relationship between the number of clusters and the level of improvement of the \textit{Ensemble} over the \textit{Merged}. Interestingly, the \textit{Merged} has the same bound as would be achieved by a single forest trained on observations arising from a $\rm Unif(a, a + K\omega)$ distribution for some constant $a$; that is, the \textit{Merged} ignores the cluster-structure of the data and instead treats the data as arising from the average of its' component distributions.

For the proof, we follow the same framework as in the above two sections in order to first present upper bounds for the \textit{Ensemble} and the \textit{Merged} learners for data with two clusters generalized to any distributional width $\omega > 0$, and then show that this approach can be extended to any $K$ that can be expressed as a power of 2.  For $K = 2$, we remark on the generalized representation regarding the measure of the leaf node containing the test point. 
\begin{remark}
\label{rem: gen_leafsize}
For $K = 2$, let $F_1 = \rm Unif(a_{1}, b_{1})$ and $F_2 = \rm Unif(a_{2}, b_{2})$. The test point $\bx_{\star \mathbf{S}}  \sim F_{\bx_{\star \mathbf{S}} }$, where $F_{\bx_{\star \mathbf{S}} } = AF_1 + (1-A)F_2$ and $A \sim $ Bernoulli$(\frac{1}{2})$. $F_{\bx_{\star \mathbf{S}} ^{(j)}}$ similarly denotes the marginal distribution of $\bx_{\star \mathbf{S}} ^{(j)}$. To calculate expressions for the endpoints of the box containing $\bx_{\star \mathbf{S}} $, first delineate
\begin{align*}
    t_{nj} &\overset{D}{=} \sum_{k = 1}^{K_{nj}} B_{kj} 2^{-k}   \\
    s_{nj} &\overset{D}{=} \sum_{k = 1}^{K_{nj}} B_{kj} 2^{-k} + 2^{-{ K_{nj}}} 
\end{align*}
where $B_{kj}$ and $K_{nj}$ follow the same definitions as before. 
Then, since $t_{nj}$ and $s_{nj}$ converge to standard uniform variables as $K_{nj} \to \infty$, we can apply the inverse probability transform to express the endpoints $a_{nj}$ and $b_{nj}$ as
\begin{align*}
    a_{nj} &= F_{\bx_{\star \mathbf{S}} }^{-1}(t_{nj}) = A F_{\bx_1^{(j)}}^{-1}(t_{nj}) + (1 - A) F_{\bx_2^{(j)}}^{-1}(t_{nj}) \\
    b_{nj} &= F_{\bx_{\star \mathbf{S}} }^{-1}(s_{nj}) = A F_{\bx_1^{(j)}}^{-1}(s_{nj}) + (1 - A) F_{\bx_2^{(j)}}^{-1}(s_{nj})
\end{align*}
Note that for $\bx \sim \rm Unif(a, b)$ distribution with $b - a = \omega$, $F_{\bx}^{-1}(u) = \omega u + a$. 
Therefore, the length of each side of the box is now given by 
\begin{align*}
    \lambda(A_{nj}) &= b_{nj} - a_{nj} \\
    &= A\left[F_{\bx_1^{(j)}}^{-1}(s_{nj}) -   F_{\bx_1^{(j)}}^{-1}(t_{nj}) \right] + (1-A)\left[F_{\bx_2^{(j)}}^{-1}(s_{nj}) - F_{\bx_2^{(j)}}^{-1}(t_{nj}) \right] \\
    &= \omega 2^{-K_{nj}}
\end{align*}
\end{remark}

We can then proceed with the proof of Theorem \ref{theorem: gen_moreclust}. 
\begin{proof}
\begin{enumerate}
    \item [(i)] 
    For the \textit{Ensemble}, we begin by considering training and testing data from the first cluster.
\begin{align*}
    E\left[ \mathbbm{1}_{\{\bx_{\mathbf{i}} \in A_n \}} \mathbbm{1}_{\{i \in \mathbb{S}_1 \}} \mathbbm{1}_{\{A= 1 \}} \right] &=  E\left[ \mathbbm{1}_{\{\bx_{\mathbf{i}} \in A_n \}} | \mathbbm{1}_{\{i \in \mathbb{S}_1 \}} \mathbbm{1}_{\{A= 1 \}} \right] P\left( i \in \mathbb{S}_1 \right) P\left( A= 1 \right) \\
    &= \frac{1}{4}  \prod_{j = 1}^S E\left[ \mathbbm{1}_{\{\bx_{\mathbf{i}}^{(j)} \in A_{nj} \}} | \mathbbm{1}_{\{i \in \mathbb{S}_1 \}} \mathbbm{1}_{\{A= 1 \}} \right] \\
    &= \frac{1}{4}  \prod_{j = 1}^S \left[ P\left( \bx_{\mathbf{i}}^{(j)} \leq b_{nj} | i \in \mathbb{S}_1, A= 1\right) - P\left(\bx_{\mathbf{i}}^{(j)} \leq a_{nj} | i \in \mathbb{S}_1, A= 1 \right) \right] \\
    &= \frac{1}{4}  \prod_{j = 1}^S F_{X_{\mathbf{1}}^{(j)}}\left(F_{\bx_{\mathbf{1} \mathbf{S}}^{(j)}}^{-1}\left(s_{nj} \right) \right) - F_{X_{\mathbf{1}}^{(j)}}\left(F_{\bx_{\mathbf{1} \mathbf{S}}^{(j)}}^{-1}\left(t_{nj} \right) \right) \\
    &=  \frac{1}{4}  \prod_{j = 1}^S [s_{nj} - t_{nj}] \\
    &= 2^{-\lceil \log_2k_n \rceil-2}
\end{align*}
The quantity $E\left[ \mathbbm{1}_{\{\bx_{\mathbf{i}} \in A_n \}} \mathbbm{1}_{\{i \in \mathbb{S}_2 \}} \mathbbm{1}_{\{A = 0 \}} \right]$ calculated with training and testing points from the second cluster, takes on the same value. 
Therefore, the Cauchy Schwarz bound corresponding to either of these situations is equal to
\begin{align*}
    \frac{4^{\lceil \log_2k_n \rceil+2}}{n(n-1)}
\end{align*}
Then, using similar arguments as before, 
\begin{align*}
    &\mathbbm{E}_{\bx_{\star \mathbf{S}} , \mathcal{D}_n, \theta, \theta^{'}}\left[ W_{1} W_{2}^{'} \text{Trace}[(\bx_{\mathbf{1} \mathbf{S}} - \bx_{\star \mathbf{S}} ) (\bx_{\mathbf{2}} - \bx_{\star \mathbf{S}} )^T] \right] \\
    &= \frac{4^{\lceil \log_2k_n \rceil+2}}{n(n-1)} \\
    &\times \Bigg[ \mathbbm{E}_{\bx_{\mathbf{1}}, \bx_{\mathbf{2}}}\bigg[\mathbbm{1}_{\{\bx_{\mathbf{1} \mathbf{S}} \in A_n\}} \mathbbm{1}_{\{\bx_{\mathbf{2}} \in A_n^{'}\}} \left[\mathbbm{1}_{\{1 \in \mathbb{S}_1\}}\right] \left[\mathbbm{1}_{\{2 \in \mathbb{S}_1^{'}\}}\right] \left[\mathbbm{1}_{\{A= 1\}}\right]^2 \text{Trace}[(\bx_{\mathbf{1} \mathbf{S}} - \bx_{\star \mathbf{S}} ) (\bx_{\mathbf{2}} - \bx_{\star \mathbf{S}} )^T] \bigg] \\
    &+ \mathbbm{E}_{\bx_{\mathbf{1}}, \bx_{\mathbf{2}}}\bigg[\mathbbm{1}_{\{\bx_{\mathbf{1} \mathbf{S}} \in A_n\}} \mathbbm{1}_{\{\bx_{\mathbf{2}} \in A_n^{'}\}} \left[\mathbbm{1}_{\{1 \in \mathbb{S}_2\}}\right] \left[\mathbbm{1}_{\{2 \in \mathbb{S}_2^{'}\}}\right] \left[\mathbbm{1}_{\{A= 1\}}\right]^2 \text{Trace}[(\bx_{\mathbf{1} \mathbf{S}} - \bx_{\star \mathbf{S}} ) (\bx_{\mathbf{2}} - \bx_{\star \mathbf{S}} )^T] \bigg] 
    \Bigg]
\end{align*}
We next need to calculate
\begin{align*}
    &E\left[ \left|\bx_{\mathbf{1} \mathbf{S}}^{(j)} - \bx_{\star \mathbf{S}} ^{(j)}|\mathbbm{1}_{\{\bx_{\mathbf{i}} \in A_n \}} \mathbbm{1}_{\{i \in \mathbb{S}_1 \}} \mathbbm{1}_{\{A= 1 \}} \right|\bx_{\star \mathbf{S}}  \right] \\
    &= E\left[ \left|\bx_{\mathbf{1} \mathbf{S}}^{(j)} - \bx_{\star \mathbf{S}} ^{(j)}\right| \mathbbm{1}_{\{\bx_{\mathbf{i}} \in A_n \}} \mathbbm{1}_{\{i \in \mathbb{S}_1 \}} \mathbbm{1}_{\{A= 1 \}}\right] E\left[\mathbbm{1}_{\{\bx_{\mathbf{i}} \in A_n \}} | \mathbbm{1}_{\{i \in \mathbb{S}_1 \}} \mathbbm{1}_{\{A= 1 \}}\right]  P\left( i \in \mathbb{S}_1 \right) P\left( A= 1 \right) \\
    &= \frac{1}{2} \lambda(A_{n} | A= 1) \times 2^{-\lceil \log_2k_n \rceil-2} \\
    &= 2^{-\lceil \log_2k_n \rceil-3} \omega 2^{-K_{nj}}
\end{align*}
We can then simplify the following expressions: 
\begin{align*}
    &E\left[ \sum_j \left(\mathbbm{E}\left[\left|\bx_{\mathbf{1} \mathbf{S}}^{(j)} - \bx_{\star \mathbf{S}} ^{(j)}\right|\mathbbm{1}_{\{\bx_{\mathbf{i}} \in A_n \}} \mathbbm{1}_{\{i \in \mathbb{S}_1 \}} \mathbbm{1}_{\{A= 1 \}} \big| \bx_{\star \mathbf{S}} \right] \right)^2 \right] \\
    &= 4^{-\lceil \log_2k_n \rceil-3} \omega^2 \sum_j E\left[ \left( 2^{-K_{nj}} \right)^2 \right]
\end{align*}
and 
\begin{align*}
   &E\left[ \sum_j \left(\mathbbm{E}\left[\left|\bx_{\mathbf{1} \mathbf{S}}^{(j)} - \bx_{\star \mathbf{S}} ^{(j)}\right|\mathbbm{1}_{\{\bx_{\mathbf{i}} \in A_n \}} \mathbbm{1}_{\{i \in \mathbb{S}_2 \}} \mathbbm{1}_{\{A = 0 \}} \big| \bx_{\star \mathbf{S}} \right] \right)^2 \right] \\ &= 4^{-\lceil \log_2k_n \rceil-3} \omega^2 \sum_j E\left[ \left( 2^{-K_{nj}} \right)^2 \right]
\end{align*}

Therefore, the overall upper bound for the ensemble for $K = 2$ is 
\begin{align*}
 &n(n-1) \times \frac{4^{\lceil \log_2k_n \rceil+2}}{n(n-1)} \times 4^{-\lceil \log_2k_n \rceil-3} \times \omega^2 \sum_{j = 1}^S 2 E\left[ \left( 2^{-K_{nj}} \right)^2 \right]\\
 &= \frac{1}{2} \omega^2 S  \sum_{j = 1}^S k_n^{log_2(1 - 3p_n/4)} 
\end{align*}
We can apply the same arguments to the $K =4$ situation, yielding 

\begin{align*}
    &\frac{1}{4} \omega^2 \sum_{j =1}^S 4  E\left[ \left( 2^{-K_{nj}} \right)^2 \right]\\
    &=  \omega^2 S k_n^{log_2(1 - 3p_n/4)} \\
\end{align*}

In a similar way, bounds may be calculated for $K = 8, 16, ...$; we can thus generalize the above bound to $K \geq 2$, assuming $K$ is a power of 2.

    \item [(ii)] 
    For the \textit{Merged}, we again begin with $K = 2$. The training data now takes the form
\begin{align*}
    \bx_{\mathbf{i}} \sim F_1 \mathbbm{1}_{\{ i \in \mathbb{S}_1 \}} + F_2 \mathbbm{1}_{\{ i \in \mathbb{S}_2 \}}
\end{align*}
We first calculate
\begin{align*}
    E\left[\mathbbm{1}_{\{\bx_{\mathbf{i}}^{(j)} \in A_{nj} \}} \right] &= P(A= 1) P(i \in \mathbb{S}_1) E\left[\mathbbm{1}_{\{\bx_{\mathbf{i}}^{(j)} \in A_{nj} \}} |A= 1, i \in \mathbb{S}_1 \right] \\
    &+ P(A = 0)P(i \in \mathbb{S}_1) E\left[\mathbbm{1}_{\{\bx_{\mathbf{i}}^{(j)} \in A_{nj} \}} |A = 0, i \in \mathbb{S}_1 \right] \\
    &+ P(A= 1) P(i \in \mathbb{S}_1) E\left[\mathbbm{1}_{\{\bx_{\mathbf{i}}^{(j)} \in A_{nj} \}} |A= 1, i \in \mathbb{S}_1 \right] \\
    &+ P(A = 0)P(i \in \mathbb{S}_2) E\left[\mathbbm{1}_{\{\bx_{\mathbf{i}}^{(j)} \in A_{nj} \}} |A = 0, i \in \mathbb{S}_2 \right] \\
    &\leq \frac{1}{4}\left[4(s_{nj} - t_{nj}) \right] \\
    &= 2^{-K_{nj}}
\end{align*}

Therefore, $E\left[\mathbbm{1}_{\{X_{i} \in A_{n} \}} \right] \leq 2^{-\lceil \log_2k_n \rceil}$. The Cauchy-Schwarz bound is then equal to 
\begin{align*}
    \frac{4^{\lceil \log_2k_n \rceil}}{n(n-1)}
\end{align*}
The next quantity we calculate is
\begin{align*}
    \mathbbm{E}\left[ \mathbbm{1}_{\{\bx_{\mathbf{1} \mathbf{S}} \in A_n \}} \left|\bx_{\mathbf{1} \mathbf{S}}^{(j)} - \bx_{\star \mathbf{S}} ^{(j)}\right| \bigg| \bx_{\star \mathbf{S}}  \right] &= \mathbbm{E}\left[  \left|\bx_{\mathbf{1} \mathbf{S}}^{(j)} - \bx_{\star \mathbf{S}} ^{(j)}\right| \bigg| \mathbbm{1}_{\{\bx_{\mathbf{1} \mathbf{S}}^{(j)} \in A_{nj} \}}, \bx_{\star \mathbf{S}}  \right] \mathbbm{E}\left[ \mathbbm{1}_{\{\bx_{\mathbf{1} \mathbf{S}} \in A_n \}} |\bx_{\star \mathbf{S}}  \right] \\
    &= \frac{1}{2} \lambda(A_{nj}) \times 2^{- \lceil \log_2k_n\rceil + 1} \\
\end{align*}
Therefore, 
\begin{align*}
    \sum_j \left(\mathbbm{E}\left[ \mathbbm{1}_{\{\bx_{\mathbf{1} \mathbf{S}} \in A_n \}} \left|\bx_{\mathbf{1} \mathbf{S}}^{(j)} - \bx_{\star \mathbf{S}} ^{(j)}\right| \bigg| \bx_{\star \mathbf{S}} \right] \right)^2 &= \sum_j \left[\lambda(A_{nj}) \times 2^{- \lceil \log_2k_n\rceil}\right]^2\\
    &= 4^{- \lceil \log_2k_n\rceil} \sum_j \lambda^2(A_{nj}) \\
\end{align*}
To move forward, we first need to calculate 
\begin{align*}
    \E\left[\lambda^2(A_{nj}) \right] &=   \E\left[\left(A\left[F_{\bx_{\mathbf{1}}^{(j)}}^{-1}(s_{nj}) -   F_{\bx_{\mathbf{1}}^{(j)}}^{-1}(t_{nj}) \right] + (1-A)\left[F_{\bx_{\mathbf{2}}^{(j)}}^{-1}(s_{nj}) - F_{\bx_{\mathbf{2}}^{(j)}}^{-1}(t_{nj}) \right]\right)^2 \right] \\
    &=\E\left[A^2 \left( F_{\bx_{\mathbf{1}}^{(j)}}^{-1}(s_{nj}) -   F_{\bx_{\mathbf{1}}^{(j)}}^{-1}(t_{nj}) \right)^2 + (1-A)^2 \left( F_{\bx_{\mathbf{2}}^{(j)}}^{-1}(s_{nj}) - F_{\bx_{\mathbf{2}}^{(j)}}^{-1}(t_{nj}) \right)^2 \right]  \\
    &= \frac{1}{2} E\left[ 2 \left( \omega 2^{-K_{nj}} \right)^2  \right]
\end{align*}
The cross-terms in the quadratic cancel out since $(1-A)
\times A = 0$ no matter what the value of $A$. Then, 
\begin{align*}
    &\mathbbm{E}\left[ \sum_j \left(\mathbbm{E}\left[ \mathbbm{1}_{\{\bx_{\mathbf{1} \mathbf{S}} \in A_n \}} \left|\bx_{\mathbf{1} \mathbf{S}}^{(j)} - \bx_{\star \mathbf{S}} ^{(j)}\right| \bigg| \bx_{\star \mathbf{S}}  \right] \right)^2 \right]  \\
    &= 4^{- \lceil \log_2k_n\rceil} \times \sum_j \E\left[ \left( \omega 2^{-K_{nj}} \right)^2 \right] \\
     &= 4^{- \lceil \log_2k_n\rceil}  \omega^2 S k_n^{log_2(1 - 3p_n/4)} 
\end{align*}
Finally, the overall bound for $K = 2$ is 
\begin{align*}
    &n(n-1) \times \frac{4^{\lceil \log_2k_n \rceil}}{n(n-1)} \times 4^{- \lceil \log_2k_n\rceil} \times \sum_{j =1}^S  E\left[ \left(\omega 2^{-K_{nj}} \right)^2 \right] \\
    &= \omega^2 S k_n^{log_2(1 - 3p_n/4)} 
\end{align*}

We can analogously calculate the bound for $K = 4$:
\begin{align*}
     &4 \omega^2 \sum_j E\left[ \left( 2^{-K_{nj}} \right)^2 \right]\\
     &= 4 \omega^2 S k_n^{log_2(1 - 3p_n/4)} 
\end{align*}
\end{enumerate}
For the \textit{Merged}, note that 
\begin{align*}
    \mathbbm{E}\left[ \mathbbm{1}_{\{\bx_{\mathbf{1} \mathbf{S}} \in A_n \}} (\bx_{\mathbf{1} \mathbf{S}}^{(j)} - \bx_{\star \mathbf{S}} ^{(j)}) \bigg| \bx_{\star \mathbf{S}}  \right] &= \mathbbm{E}\left[  (\bx_{\mathbf{1} \mathbf{S}}^{(j)} - \bx_{\star \mathbf{S}} ^{(j)}) \bigg| \mathbbm{1}_{\{\bx_{\mathbf{1} \mathbf{S}}^{(j)} \in A_{nj} \}}, \bx_{\star \mathbf{S}}  \right] \mathbbm{E}\left[ \mathbbm{1}_{\{\bx_{\mathbf{1} \mathbf{S}} \in A_n \}} |\bx_{\star \mathbf{S}}  \right] \\
    &\leq \frac{1}{2} \lambda(A_{nj}) \times 2^{- \lceil \log_2k_n\rceil+ 1} \\
\end{align*}
and secondly, in calculating $\E\left[\lambda^2(A_{nj}) \right]$, the cross terms always equal zero, since $\mathbbm{1}_{\{B = l\}}\mathbbm{1}_{\{B = k\}}$ always equals zero for $l \neq k$. We generalize the above results to higher values of $K$ by noting that the $\frac{1}{4}$ multiplying factor in (i) always remains constant, while the multiplier of  $E\left[ \left(\omega 2^{-K_{nj}} \right)^2 \right]$ is equal to $K$. The \textit{Merged} bound is generalized by noting as in the multiplying term increases exponentially with $K$, at a rate of $4^{\log_2{k} - 1}$. Formalizing these two facts completes the proof. 
\end{proof}

\end{document}